\theoremstyle{plain}
\newtheorem{theorem}{Theorem}[section]
\newtheorem{definition}[theorem]{Definition}
\newtheorem{property}[theorem]{Property}
\newtheorem{lemma}[theorem]{Lemma}
\newtheorem{corollary}[theorem]{Corollary}
\newcommand{\longueur}[1]{\ensuremath{\lvert #1 \rvert}}
\newcommand{\generateur}[1]{#1_0} \newcommand{\prefix}[1]{#1_1} \newcommand{\suffix}[1]{#1_2} %MY NOTATIONS
\newcommand{\Useul}[2][u]{\ensuremath{\generateur{#1}{}^{{#2}_1}\prefix{#1}\generateur{#1}{}^{{#2}_2}}}
\newcommand{\Udeux}[2]{\Useul[#1]{#2}.\Useul[#1]{#2}}
\newcommand{\UU}[1][e]{\Udeux{u}{#1}}
\newcommand{\dsuu}{\ensuremath{\mathcal{U} }}
\newcommand{\dsuv}{\ensuremath{\mathcal{V} }}
\newcommand{\dsuw}{\ensuremath{\mathcal{W} }}
\newcommand{\conj}[1]{\tilde{#1}}
\DeclareMathOperator{\lcp}{lcp}
\DeclareMathOperator{\lcs}{lcs}
\DeclareMathOperator{\classe}{Cl}
\begin{document}
\bibliographystyle{plain}

\title{A proof that a word of length $n$ has less than $1.5n$ distinct squares}
\author{Adrien Thierry}
\maketitle

%\address{McMaster University, Hamilton, Canada}

\begin{abstract}
We are interested in the maximal number of distinct squares in a word. This problem was introduced by Fraenkel and Simpson, who presented a bound of $2n$ for a word of length $n$, and conjectured that the bound was less than $n$. Being that the problem is on repetitions, their solution relies on Fine and Wilf's Periodicity Lemma. Ilie then refined their result and presented a bound of $2n - \Theta (\log n)$. Lam used an induction to get a bound of $\frac{95}{48}n$. Deza, Franek and Thierry achieved a bound of $\frac{11}{6}n$ through a combinatorial approach. \\
Using the properties of the core of the interrupt presented by Thierry, we refined here the combinatorial structures exhibited by Deza, Franek and Thierry to offer a bound of $\frac{3}{2}n$.
\end{abstract}

\textit{Keywords:}
strings, square, distinct squares, (FS) double square.

%%%%%%%%%%%%%%%%%%%%%%%%%%%%%%%%%       INTRODUCTION      %%%%%%%%%%%%%%%%%%%%%%%%%%%%%%%%%%%

\section{Introduction}

Repetitions are one of the most fundamental and well studied phenomenon in sequences. In combinatorics on words, multiplication is defined as concatenation, and a square is the most elementary form of repetition. If the maximal number of squares in a word of length $n$ is $n^2/4$, the number of distinct squares is less than $n/2$ for a word constructed over a unitary alphabet but can be in the order of $n$ for binary alphabets (as presented in \cite{FS1}), which asks the question: "What is the maximal number of distinct squares in a word of length $n$ ?". Fraenkel and Simpson noted and proved in \cite{FS1} that no more than two squares can have their last occurence starting at the same position using Crochemore and Rytter three squares lemma \cite{CR95}. This fact leads to a bound of $2n$ distinct squares in a word of length $n$ (a proof can be found in \cite{lothaire2}). But if two squares have their last occurence starting at the same position (we will call them FS double squares), a restrictive condition, they have to obey a certain structure. As in \cite{DFT1}, we will use those structures to relate the different FS double squares of a word to one another. Some of these relations are restrictive and will limit the number of FS double squares involved. Other will only give informations on the relative positions of the two FS double squares. We will prove that the relations defined are the only one possible, hence that we have an exhaustive list of the different FS double squares of a word. Finally, we will offer an induction that, using all the results that will have been presented, will prove that the number of FS double squares of a word is less than $\frac{n}{2}$. The bound for the number of distinct squares is a corollary. \\

The major improvements from the previous article, \cite{DFT1}, are the analysis of the $\gamma$-mates and the amortization of the $\epsilon$-mates. Using the core of the interrupt of \cite{moa1} instead of the inversion factor, we could redefine $\epsilon$-mates. The presence of $\beta$-mates prevents the existence of a certain number of (new) $\epsilon$'s.
This amortization allows the induction principle to rely on the tail of $u$ and $v$ only (and not on their gap anymore). 
The gap represent how far $v$ starts after $u$. the tail how fat it ends after $u$ has ended ($ut = gv$, for $g$ the gap ant $t$ the tail). 
There are examples of peculiar double squares that start and end not far enough from u: the (new) $\epsilon$-mates, and we had to make concessions on the tightness of the result.
By incorporating the $\epsilon$-mates in the family, we get rid of that problem, and by analyzing the relationship between $\beta$ and $\epsilon$-mates, the result is tightened. \\

%%%%%%%%%%%%%%%%%%%%%%%%%%%%%%%%%       DEFINITIONS      %%%%%%%%%%%%%%%%%%%%%%%%%%%%%%%%%%%

\section{Notations}

An \emph{alphabet} $A$ is a finite set. 
 The elements of $A$ are called \emph{letters}. \\
 A  \emph{word} $w$ is a sequence of letters of length $n$, we note its length $\longueur{w} =n$, which can also be presented under the form of an array $w[1,...,n]$.
  We define \emph{multiplication} as concatenation. 
In a traditional fashion, we define the \emph{$n^{th}$ power} of a word $w$ as $n$ times the multiplication of $w$ with itself.
 A word $x$ is \emph{primitive} if $x$ cannot be expressed as a non-trivial power of another word $x'$. The primitive root of a word $x$ is the primitive word $y$ such that $x=y^n$ for an integer $n$ (uniqueness by Fine and Wilf, \cite{finewilf}).\\
If $x = x_1x_2x_3$ for non-empty words $x_1, x_2$ and $x_3$, then $x_1$ is a \emph{prefix} of $x$, $x_2$ is a \emph{factor} of $x$, and $x_3$ is a \emph{suffix} of $x$ (if both the prefix and the suffix are non empty, we refer to them as proper). The starting position of $x_2$ is $s(x_2) = \lvert x_1 \rvert +1$, its ending position $end(x_2)=\lvert x_1 \rvert +\lvert x_2 \rvert$. A word $\tilde{x}$ is a \emph{conjugate} of $x$ if $x=x_1x_2$ and $\tilde{x}=x_2x_1$ for non-empty words $x_1$ and $x_2$. The \emph{conjugacy class} of $w$, denoted $\classe (w)$, is the set of the conjugates of $w$. \\
 A factor $x, \lvert x \rvert =n$ of $w$ has \emph{period} $p\leq\frac{n}{2}$ if $x[i]=x[i+\lvert p \rvert], \forall i \in [1,...n-\lvert p\rvert]$.\\
 A factor $w[i+p... j+p]$ of $w$ is a cyclic shift of of $w[i...j]$ by $p$ positions if $w[i...j+p]$ has period $j-i+1$ if $p\geq 0 $, or if $w[i+p...j]$ has period $j-i+1$ if $p<0$.  \\
The \emph{longest common prefix} and \emph{suffix} of $x$ and $y$ are  denoted $lcp(x,y)$ and $lcs(x,y)$ respectively (note that $\lcs (x,y)$ and $\lcp (x,y)$ are words).

%%%%%%%%%%%%%%%%%%%%%%%%%%%%%%%%%       TOOLS     %%%%%%%%%%%%%%%%%%%%%%%%%%%%%%%%%%%

\section{Tools}

We first present the discrete version of Fine and Wilf's Periodicity Lemma which answers the question: "How long does a periodic sequence of periods $m$ and $n$ has to be to ensure that it also has period $\gcd(m,n)$, their greatest common divisor ?".

\begin{lemma}\label{fw} [Fine and Wilf's Periodicity Lemma, \cite{finewilf}]
Let $w$ be a word having periods $x$ and $y$. If $\lvert w \rvert \geq x + y - \gcd (x, y)$, then $w$ has period $\gcd (x, y)$. 
\end{lemma}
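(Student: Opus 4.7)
The plan is to proceed by strong induction on $x + y$, which unrolls the Euclidean algorithm for $\gcd(x, y)$. The base case is $x = y$, where $\gcd(x, y) = x$ and the claim is immediate.

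For the inductive step, assume without loss of generality that $x > y$ and set $d = \gcd(x, y) = \gcd(y, x - y)$. The key reduction is to the prefix $w' = w[1, \ldots, |w| - y]$. I would first verify that $w'$ carries both periods $y$ (inherited trivially from $w$) and $x - y$: for any index $i$ with $1 \leq i \leq |w| - x$, combining period $x$ forward with period $y$ backward on $w$ yields $w[i] = w[i + x] = w[i + x - y]$, hence $w'[i] = w'[i + x - y]$. Since $|w'| = |w| - y \geq (x + y - d) - y = (x - y) + y - \gcd(x - y, y)$ and $(x - y) + y = x < x + y$, the inductive hypothesis applies and $w'$ has period $d$.

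The remaining task is to transfer the period $d$ from $w'$ to all of $w$, i.e., to check $w[i] = w[i + d]$ on the tail positions $|w| - y - d < i \leq |w| - d$. For such an $i$, if $i > y$ I would use the fold-back $w[i] = w[i - y] = w[i - y + d] = w[i + d]$, where the middle equality invokes the period of $w'$ and the outer two use period $y$ of $w$. This covers every tail position whenever $|w| \geq 2y + d$, which by the hypothesis $|w| \geq x + y - d$ is automatic as soon as $x - y \geq 2d$; since $x - y$ is a multiple of $d$, this fails only when $x - y = d$ exactly.

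The main obstacle is this corner case $x = y + d$, in which a handful of positions $i \in (|w| - y - d, y]$ remain inaccessible by a single fold-back. For these I would argue directly: iterating the already-established equality $w[j] = w[j + d]$ on $j \leq |w| - y - d$, and using that $d$ divides $y$, one checks that $w[1, \ldots, y]$ itself carries period $d$; a single application of period $y$ of $w$ then transports $w[i] = w[i + d]$ onto the problematic positions. I expect this boundary verification to be the most fiddly step, while the Euclidean reduction carries the structural content of the proof.
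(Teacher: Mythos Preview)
The paper does not prove this lemma; it is stated with a citation to Fine and Wilf's original article and then used as a black box. There is therefore no in-paper argument to compare against.

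Your Euclidean induction is one of the standard routes to Fine--Wilf and is correct. Two small remarks on the corner case $x = y + d$. First, showing that $w[1,\ldots,y]$ carries period $d$ needs no iteration: since $|w| \geq x + y - d = 2y$ here, $w[1,\ldots,y]$ is simply a prefix of $w'$ and inherits period $d$ directly. Second, for a problematic position $i \in (|w|-y-d,\, y]$ one has $i > y - d$ and hence $i+d > y$, so a single backward shift by $y$ gives $w[i+d] = w[i+d-y]$; both $i$ and $i+d-y$ now lie in $[1,y]$ and differ by the multiple $y-d$ of $d$, so the period of $w[1,\ldots,y]$ closes the loop. This is precisely what you sketch, and it goes through without difficulty.
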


\begin{corollary}\label{sp}[Synchronization principle]
If $u$ is primitive, $\forall \conj{u} \in \classe(u), u \neq \conj{u}$.
\end{corollary}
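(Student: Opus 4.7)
The plan is to argue by contrapositive: suppose $u$ coincides with one of its conjugates $\tilde{u}$, and derive that $u$ is not primitive. By the definition of conjugate given in the Notations section, $u = x_1 x_2$ and $\tilde{u} = x_2 x_1$ with both $x_1$ and $x_2$ non-empty, so the assumption $u = \tilde{u}$ reads $x_1 x_2 = x_2 x_1$.

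Next I would convert this equality into a periodicity statement on $u$. Reading $u = x_1 x_2 = x_2 x_1$, one sees that $u$ admits both $\lvert x_1 \rvert$ and $\lvert x_2 \rvert$ as periods: the first equality shows that the prefix of length $\lvert x_1 \rvert$ of $u$ equals $x_1$ and that the same $x_1$ reappears starting at position $\lvert x_2 \rvert + 1$; more cleanly, $u[i] = u[i + \lvert x_2 \rvert]$ for the appropriate range, and symmetrically with $\lvert x_1 \rvert$. Writing $p = \lvert x_1 \rvert$ and $q = \lvert x_2 \rvert$, we have $\lvert u \rvert = p + q \geq p + q - \gcd(p,q)$, so Fine and Wilf's Periodicity Lemma (Lemma~\ref{fw}) applies and gives that $u$ has period $d = \gcd(p,q)$.

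I would then observe that since $d$ divides both $p$ and $q$, it divides $\lvert u \rvert = p+q$, and because $x_1$ and $x_2$ are non-empty we have $d \leq \min(p,q) < p + q = \lvert u \rvert$. A word of length $\lvert u \rvert$ whose period $d$ properly divides its length is a non-trivial power of its prefix of length $d$, contradicting the primitivity of $u$.

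The argument presents no real obstacle: the only delicate point is making sure the hypothesis of Lemma~\ref{fw} is met, which is immediate because $\lvert u \rvert$ equals $p+q$ exactly, and ensuring that the resulting period yields a \emph{non-trivial} power, which is guaranteed by the non-emptiness of $x_1$ and $x_2$. Everything else is bookkeeping.
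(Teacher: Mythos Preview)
Your argument is correct and is precisely the intended derivation: the paper states the synchronization principle as an immediate corollary of Fine and Wilf's Periodicity Lemma without further proof, and your contrapositive via the commutation $x_1x_2 = x_2x_1$ and Lemma~\ref{fw} is exactly how that corollary is obtained.
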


We now present the different results, of Crochemore, Deza, Fraenkel, Franek, Lam, Rytter, Simpsons and Thierry used in the different proofs.

%\begin{lemma}\label{deuxcx}
%Let $w$ be a word with 2 squares $u^2$ and $U^2$ starting at the same position, with $\longueur{u} < \longueur{U} < 2\longueur{u}$. Then $u = \generateur{x}^p\prefix{x}$ and $U= \generateur{x}^p\prefix{x}\generateur{x}^q$ for $\generateur{x}$ a unique primitive word, $\prefix{x}$ a unique prefix of $\generateur{x}$ (note that $\prefix{x}$ can be empty) and $p, q, p \geq q$ two integers.
%\end{lemma}

\begin{lemma}\label{crts}[Crochemore and Rytter's Three Squares Lemma, \cite{CR95}]
Let $u_1^2$, $u_2^2$, $u_3^2$ be three prefixes of a word $x$ with $u_1$ primitive and $\longueur{u_1} < \longueur{u_2} < \longueur{u_3}$. Then $\longueur{u_1} + \longueur{u_2} \leq \longueur{u_3}$.
\end{lemma}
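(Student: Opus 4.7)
The plan is to argue by contradiction: suppose $\longueur{u_1} + \longueur{u_2} > \longueur{u_3}$. The aim is to force $u_1$ to admit a period that is a proper divisor of $\longueur{u_1}$, contradicting the primitivity of $u_1$.

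First, I would exploit the inclusion that $u_2^2$ is a prefix of $u_3^2$. Writing $u_3 = u_2 s$ with $|s| = \longueur{u_3} - \longueur{u_2}$, equating the first $2\longueur{u_2}$ letters of $u_2^2 = u_2 \cdot u_2$ with those of $u_3^2 = u_2 s \cdot u_2 s$ forces $s$ to be a prefix of $u_2$. Writing $u_2 = s p$ then yields the decomposition $u_3 = s p s$; the hypothesis $\longueur{u_1} + \longueur{u_2} > \longueur{u_3}$ together with $\longueur{u_1} < \longueur{u_2}$ guarantees $|p| > 0$, so both blocks are nontrivial.

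Next I would apply the analogous reasoning one level down. Since $u_1$ is a prefix of $u_2 = s p$ and the contradictory hypothesis gives $\longueur{u_1} > \longueur{u_3} - \longueur{u_2} = |s|$, one can write $u_1 = s q$ with $q$ a nonempty prefix of $p$. Comparing $u_1^2 = s q s q$ to the length-$2\longueur{u_1}$ prefix of $u_3^2 = s p s \cdot s p s$ then produces letter-by-letter identifications that expose two periods of $u_1$ strictly smaller than $\longueur{u_1}$, expressible in terms of $|s|$, $|p|$ and $|q|$. Lemma \ref{fw} applied to $u_1$ with these two periods yields a period of $u_1$ that is a proper divisor of $\longueur{u_1}$, contradicting the primitivity of $u_1$ and hence closing the contradiction.

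The main difficulty I expect will be the second step: the exact pair of periods of $u_1$ that one reads off depends on how the lengths $|s|$, $|p|$ and $|q|$ compare, and some subcase analysis is needed to ensure Fine and Wilf's numerical hypothesis $\longueur{u_1} \geq p_1 + p_2 - \gcd(p_1, p_2)$ is met in each configuration. Each subcase, however, reduces to locating coincident letters within the already-decomposed word $s p s \cdot s p s$, so no combinatorial tool beyond Lemma \ref{fw} and the definition of primitivity is required; the content is essentially bookkeeping on positions.
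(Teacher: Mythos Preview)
The paper does not prove Lemma~\ref{crts}: it is stated with a citation to \cite{CR95} and used as a black box, so there is no in-paper argument to compare against. Your outline is the standard route to this result and is sound: the decomposition $u_3 = sps$, $u_2 = sp$, $u_1 = sq$ (with $q$ a nonempty prefix of $p$) is correct under the contradictory hypothesis, and the contradiction is indeed obtained by locating two short periods in $u_1$ and invoking Lemma~\ref{fw}. Your caveat about the subcase analysis is accurate---the precise pair of periods one reads off depends on the relative sizes of $|s|$, $|p|$, $|q|$---but each case is elementary, so the sketch is complete in spirit.
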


\begin{lemma}\label{fsdsb}[Fraenkel and Simpson's Double Square Lemma, \cite{FS1}]
At most two squares can have their last occurrence starting at the same position.
\end{lemma}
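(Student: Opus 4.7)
The plan is proof by contradiction. Suppose three distinct squares $u_1^2$, $u_2^2$, $u_3^2$ with $|u_1|<|u_2|<|u_3|$ share the same last occurrence, starting at a position $i$; I will produce a later occurrence of $u_1^2$. Let $z$ be the primitive root of $u_1$, so that $u_1=z^k$ for some $k\geq 1$ with $z$ primitive. Since $z^2$ is a prefix of $u_1^2$, and $u_1^2$ is a prefix of $u_2^2$ and of $u_3^2$ (all three start at $i$), the three squares $z^2,u_2^2,u_3^2$ also all start at $i$, with $z$ primitive and $|z|<|u_2|<|u_3|$. Crochemore and Rytter's Three Squares Lemma (Lemma~\ref{crts}) then yields $|z|+|u_2|\leq |u_3|$.

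Next I would split on the size of $|u_3|$. If $|u_3|\geq 2|u_1|$, then $u_1^2$ is a prefix of $u_3$, and hence also occurs at position $i+|u_3|>i$ inside the second copy of $u_3$ in $u_3^2$, a contradiction. Otherwise $|u_3|<2|u_1|=2k|z|$, which forces $k\geq 2$ and makes $u_2$ and $u_3$ proper prefixes of $z^{2k}$. The prefix of length $2k|z|$ of $u_2^2$ then carries both periods $|z|$ and $|u_2|$. If Fine and Wilf's Lemma (Lemma~\ref{fw}) applies to this prefix, the primitivity of $z$ forces the resulting $\gcd$-period to be $|z|$ itself, so $|z|$ divides $|u_2|$; then $u_2=z^m$ with $m\geq k+1$, and $u_1^2=z^{2k}$ visibly occurs at position $i+|z|$ inside $u_2^2=z^{2m}$, again a contradiction. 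If Fine and Wilf does not apply, the failure condition $|u_2|>(2k-1)|z|$ combined with $|z|+|u_2|\leq|u_3|$ gives $|u_3|>2k|z|=2|u_1|$, which returns us to the first subcase.

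The main obstacle is precisely the non-primitive branch: one cannot simply replace $u_1^2$ by $z^2$ and invoke the primitive-case argument, because the last occurrence of $z^2$ typically lies strictly to the right of $i$ (at the end of the periodic run of $z$'s), so the ``last occurrence at $i$'' hypothesis is lost for $z$. The argument must therefore use $z$ only as an auxiliary witness for the Three Squares inequality, while deriving the eventual contradiction at the scale of $u_1^2$ itself through the Fine--Wilf analysis above.
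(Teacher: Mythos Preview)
The paper does not give its own proof of this lemma: it simply records it as a result of Fraenkel and Simpson and points to Ilie's short proof. Your argument is correct and is essentially Ilie's line of reasoning---apply the Three Squares Lemma (Lemma~\ref{crts}) with the primitive root $z$ of $u_1$ to obtain $|z|+|u_2|\le|u_3|$, and then exhibit a later occurrence of $u_1^2$.

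Your treatment of the non-primitive branch is more explicit than the usual write-ups, and it is sound. Two small remarks. First, in the Fine--Wilf subcase, once $\gcd(|z|,|u_2|)$ is a period of $z^{2k}$ you conclude $\gcd(|z|,|u_2|)=|z|$; the justification is that any period $d$ of $z^{2k}$ with $d\mid|z|$ forces $z$ itself to have period $d$, contradicting primitivity unless $d=|z|$. Second, in the ``Fine--Wilf fails'' subcase your inequality $|u_2|>(2k-1)|z|$ actually comes with the extra $-\gcd$ term, but you only need the weaker bound, so nothing is lost. The final paragraph of your proposal, flagging why one cannot simply replace $u_1$ by $z$ and rerun the primitive argument, is exactly the right diagnosis and matches why Ilie's published proof also works at the level of $u_1^2$ rather than $z^2$.
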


A short proof of \ref{fsdsb} is provided by Ilie in \cite{Ilie1}.

\begin{definition}
In a word $w$ a FS double square, often denoted \dsuu, is a set of two square factors having their last occurrences starting at the same position in $w$. The number of FS double squares of $w$ is $\delta(w)$.
\end{definition}

\begin{lemma}\label{lams}[Factorization, \cite{DFT1}]
Let \dsuu\ be a FS double square. There exists a unique primitive word $\generateur{u}$, a proper prefix $\prefix{u}$ of $\generateur{u} = \prefix{u}\suffix{u}$ and two integers $e_1, e_2, e_1 \geq e_2 \geq 1$ such that $u=\generateur{u}^{e_1}\prefix{u}, U=\generateur{u}^{e_1}\prefix{u}\generateur{u}^{e_2}$.
\end{lemma}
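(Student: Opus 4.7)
The plan is to extract progressively finer periodicity from the fact that $u^2$ and $U^2$ start at the same position $s$ in $w$. Assume without loss of generality that $|u|<|U|$, so that $u$ is a prefix of $U$. I would first show $|U|<2|u|$: if instead $|U|\geq 2|u|$, then applying the period $|U|$ of $U^2$ to its prefix $u^2$ produces a fresh occurrence of $u^2$ at position $s+|U|>s$ in $w$, contradicting the fact that $s$ is the starting position of the \emph{last} occurrence of $u^2$.

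Write $U=ut$ with $0<|t|=|U|-|u|<|u|$. Comparing the length-$2|u|$ prefix of $U^2=utut$ with $u^2=uu$, the second copy of $u$ straddles the internal boundary of $U^2$ and yields the identity $u=t\cdot v$, where $v:=u[1\,..\,|u|-|t|]$. Then $v$ is simultaneously a prefix of $u$ (by definition) and a suffix of $u$ (by the identity $u=tv$), so $v$ is a border of $u$ and $u$ therefore has period $|u|-|v|=|t|$. Now let $u_0$ be the primitive root of $t$, so $t=u_0^{e_2}$ for a unique integer $e_2\geq 1$. Since $v$ is a prefix of $u=tv$, iteration shows $v$ is a prefix of $t^{k}v$ for every $k\geq 0$, hence a prefix of the infinite periodic word $u_0^{\infty}$. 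Consequently $v=u_0^{j}u_1$ for some $j\geq 0$ and some prefix $u_1$ of $u_0$ with $|u_1|<|u_0|$, and setting $e_1:=e_2+j$ gives $u=u_0^{e_1}u_1$ and $U=ut=u_0^{e_1}u_1 u_0^{e_2}$ with $e_1\geq e_2\geq 1$.

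It remains to rule out $u_1=\varepsilon$ and to prove uniqueness. If $u_1$ were empty, then $u=u_0^{e_1}$ and $U^2=u_0^{2(e_1+e_2)}$; but then $u^2=u_0^{2e_1}$ would also occur inside $U^2$ at position $s+|u_0|>s$ (the shift fits since $e_2\geq 1$), again contradicting the last-occurrence property of $u^2$. For uniqueness, any valid decomposition satisfies $u_0^{e_2}=t$, so by uniqueness of the primitive root of $t$ both $u_0$ and $e_2$ are determined; then $e_1$ and $|u_1|$ are forced by the Euclidean division of $|u|$ by $|u_0|$, and $u_1$ is the corresponding prefix of $u_0$. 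The main technical obstacle is the short identity $u=t\cdot u[1\,..\,|u|-|t|]$ in the second paragraph, from which the border/period structure of $u$ follows at once; everything else is a routine manipulation of primitive roots combined with the Fraenkel--Simpson last-occurrence constraint of Lemma~\ref{fsdsb}.
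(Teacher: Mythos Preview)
Your proof is correct. The paper does not give its own proof of this lemma; it is stated with a citation to \cite{DFT1} (and the remark that the factorization also appears in \cite{Lam}), so there is nothing in the paper to compare against directly. Your argument follows the standard route one finds in those references: bound $|U|<2|u|$ via the last-occurrence property, read off the border $v$ of $u$ from the overlap $u^2\prec U^2$, take $u_0$ to be the primitive root of $t=U u^{-1}$, and recover $e_1,e_2,u_1$ by Euclidean division, using the last-occurrence property once more to exclude $u_1=\varepsilon$. The uniqueness argument via the primitive root of $t$ is the right one.
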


This factorization is also used in Lam, \cite{Lam}.

\begin{figure}[H]
\[ w = \rlap{$\overbrace{\phantom{aaba}}^u\overbrace{\phantom{aaba}}^u$}\rlap{$\underbrace{\phantom{aabaaab}}_U\underbrace{\phantom{aabaaab}}_U$}aabaaabaabaaab \]
\[ \generateur{u}=aab, \prefix{u}=a, e_1=e_2=1\]
  \caption{\dsuu\ is a FS double square.}
\end{figure}

\begin{definition}\label{fsds}
Let \dsuu\ be a FS double square. Then $\dsuu = \generateur{u}^{e_1}\prefix{u}\generateur{u}^{e_2}\generateur{u}^{e_1}\prefix{u}\generateur{u}^{e_2}$ for a primitive word $\generateur{u}$ and a proper prefix $\prefix{u}$ of $\generateur{u}=\prefix{u}\suffix{u}$. We call $\generateur{u}^{e_1}\prefix{u}\generateur{u}^{e_2}\generateur{u}^{e_1}\prefix{u}\generateur{u}^{e_2}$ the canonical factorization of \dsuu, $u$ its short repetition, $U$ being the long one, $e_1, e_2$ its first and second exponent respectively. We sometimes refer to the first occurence of $u$, starting at position $1$ as $u_{[1]}$, the one at position $\longueur{u}+1$ as $u_{[2]}$ and the one at position $\longueur{U}+1$ as $u_{[3]}$.
\end{definition}

\begin{lemma}\label{dft}[Deza, Franek and Thierry's Longest Common Border Lemma, \cite{DFT1}]
If $x$ is primitive, for all of its conjugates $\conj{x}$, $\longueur{\lcp (x,\conj{x})} + \longueur{\lcs(x, \conj{x})} \leq \longueur{x}- 2$.
\end{lemma}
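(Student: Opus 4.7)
Set $n = \longueur{x}$, $p = \longueur{\lcp(x,\conj{x})}$ and $s = \longueur{\lcs(x,\conj{x})}$. Since $\conj{x}$ is a non-trivial conjugate of the primitive word $x$, the synchronization principle (Corollary \ref{sp}) gives $\conj{x} \neq x$, so $p, s \leq n - 1$ and position $p+1$ exists and satisfies $x[p+1] \neq \conj{x}[p+1]$. I would argue by contradiction, assuming $p + s \geq n - 1$, and split on $p + s \geq n$ versus $p + s = n - 1$. The first case is immediate: $p \geq n - s$ places position $p + 1$ inside the common-suffix region $[n - s + 1, n]$, forcing $x[p+1] = \conj{x}[p+1]$ and contradicting the maximality of $p$.

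The substantial case is $p + s = n - 1$, in which $k := p + 1 = n - s$ is the \emph{unique} position where $x$ and $\conj{x}$ differ. I would write $x = x_1 x_2$ and $\conj{x} = x_2 x_1$ with $\longueur{x_1} = j$ for some $j \in \{1, \ldots, n-1\}$, and introduce the cyclic shift $\sigma$ on $\{1, \ldots, n\}$ defined by $\sigma(i) = ((i - 1 + j) \bmod n) + 1$, so that $\conj{x}[i] = x[\sigma(i)]$ for every $i$. The hypothesis then reads: $x[i] = x[\sigma(i)]$ for all $i \neq k$, while $x[k] \neq x[\sigma(k)]$. Because $1 \leq j \leq n - 1$, every orbit of $\sigma$ has common length $m = n/\gcd(n,j) \geq 2$. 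Chasing the $m - 1$ valid equalities once around the orbit $(k, \sigma(k), \sigma^2(k), \ldots, \sigma^{m-1}(k))$ of $k$ yields $x[\sigma(k)] = x[\sigma^2(k)] = \cdots = x[\sigma^{m-1}(k)] = x[\sigma^m(k)] = x[k]$, in direct contradiction with $x[k] \neq x[\sigma(k)]$. Hence $p + s \geq n - 1$ is impossible, and $p + s \leq n - 2$ follows.

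The only real obstacle is the case $p + s = n - 1$. One must first recognize that the assertion ``$x$ and $\conj{x}$ agree off a single position'' is really a statement about a single broken period on the cyclic word of length $n$, and then exploit primitivity --- via Corollary \ref{sp} and the fact that $0 < j < n$ --- to guarantee that $\sigma$ has no fixed points, which is what ensures every orbit is long enough for the surviving $m - 1$ equalities to close into a loop that collides with the one remaining inequality.
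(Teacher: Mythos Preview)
The paper does not actually prove this lemma; it is stated with a citation to \cite{DFT1} and used throughout as a black box, so there is no in-paper argument to compare against.

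Your proof is correct and self-contained. The case $p+s\ge n$ is handled cleanly by the overlap of the agreeing prefix and suffix forcing $x[p+1]=\conj{x}[p+1]$. In the case $p+s=n-1$, the orbit-chasing works exactly as you describe: since $0<j<n$, the cyclic shift $\sigma$ has orbit length $m=n/\gcd(n,j)\ge 2$, the positions $\sigma(k),\sigma^2(k),\dots,\sigma^{m-1}(k)$ are all distinct from $k$, and chaining the $m-1$ valid equalities $x[\sigma^\ell(k)]=x[\sigma^{\ell+1}(k)]$ for $\ell=1,\dots,m-1$ gives $x[\sigma(k)]=x[\sigma^m(k)]=x[k]$, contradicting $x[k]\ne x[\sigma(k)]$.

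One minor expository quibble: in your closing paragraph you credit primitivity (via Corollary~\ref{sp}) with ensuring that $\sigma$ has no fixed points, but that follows purely from $0<j<n$, which is built into the paper's definition of conjugate. Primitivity is used exactly once, at the very start, to obtain $\conj{x}\ne x$ and hence $p,s\le n-1$; without it the lemma is false (take $x=y^2$ and $j=\lvert y\rvert$). This does not affect the correctness of your argument, only the attribution in the commentary.
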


\begin{definition}\label{tci}
For \dsuu\ a FS double square, write the factors $\suffix{u}\prefix{u}\prefix{u}\suffix{u}$ of \dsuu\ as $\suffix{u}\prefix{u}\prefix{u}\suffix{u} = pr'_pr'r'_sspr_prr_ss$ for $p = \lcp (\prefix{u}\suffix{u}, \suffix{u}\prefix{u})$ and $s = \lcs (\prefix{u}\suffix{u}, \suffix{u}\prefix{u})$, the letters $r_p \neq r'_p$ and $r_s \neq r'_s$ and the (possibly empty) words $r$ and $r'$. The two factors $r'_sspr_p$ are the cores of the interrupt of \dsuu. In \dsuu\ they start at positions $N_1(\dsuu) = \lvert u \rvert - \lcs (\prefix{u}\suffix{u}, \suffix{u}\prefix{u}) $ and $N_2(\dsuu) = \lvert U \rvert  + \lvert u \rvert - \lcs (\prefix{u}\suffix{u}, \suffix{u}\prefix{u}) $.
\end{definition}

\begin{property}\label{ifc}[Thierry's Core of the Interrupt, \cite{moa1}]
Let \dsuu\ be a FS double square. The factors $w_1$ and $w_2$ of length $\longueur{\generateur{u}}$ that end and start with the core of the interrupt of \dsuu\ are not conjugates of $\generateur{u}$.
\end{property}

\begin{figure}[H]
%Consider again $\generateur{u} = aab$, $\prefix{u} = a$ hence $\suffix{u}=ab$. 
%\[ xx_1x = aabb\overbrace{\underbrace{\mathbf{aaa}}_{w_1}}^{w_2}bbaabb\mathbf{aaa}bb\]

\[ w = aab\rlap{${\overbrace{\phantom{b\mathbf{aaa}}}^{w_1}}$}b\underbrace{\mathbf{aaa}b}_{w_2}baabb\mathbf{aaa}bb \]
\[ \generateur{u} = aabb, \prefix{u} = a, \suffix{u}=abb, e_1=e_2=1 \]
  \caption{The cores of the interrupt of \dsuu\ (in bold).}
\end{figure}

Id est, all the occurrences of $w_1$ and $w_2$ in \dsuu\ contain its core of the interrupt. We will use those two factors as two notches in a FS double square that force the alignment of other repeated factors.

%%%%%%%%%%%%%%%%%%%%%%%%%%%%%%%%%       DEFINING MATES      %%%%%%%%%%%%%%%%%%%%%%%%%%%%%%%%%%%

\section{Mates of \dsuu}\label{section3}
We present here how different FS double squares can relate to each other.

\begin{definition}
Let $w$ be a word starting with a FS double square $\dsuu=\UU$. A \emph{$u$-close} double square is a FS double square that starts within the first occurrence of $u$, $u_{[1]}$.
\end{definition}

\begin{definition}
[$\alpha$-mates, \cite{DFT1}]
An $\alpha$-mate of \dsuu\ is a $u$-close double square \dsuv\ where $v$ is a cyclic shift of $u$ and $V$ is a cyclic shift of $U$.
\end{definition}

\begin{figure}[H]
\begin{align*}
\dsuu = (a&ab) (aab) a (aab) . (aab) (aab) a (aab) \\
\dsuv = (&aba) (aba) a (aba) . (aba) (aba) a (aba) \\ \text{and }
w = \rlap{$\overbrace{\phantom{aabaabaaabaabaabaaab}}^{\dsuu}$} a & \underbrace{abaabaaabaabaabaaaba}_{\dsuv}
\end{align*}
  \caption{\dsuv\ is a $\alpha$-mate of \dsuu.}
\end{figure}

As proved in \cite{DFT1}, an $\alpha$-mate of \dsuu\ has the factorization: $\dsuv=\conj{\generateur{u}}^{e_1}\conj{\prefix{u}}\conj{\generateur{u}}^{e_2}\conj{\generateur{u}}^{e_1}\conj{\prefix{u}}\conj{\generateur{u}}^{e_2}$ for $\conj{\generateur{u}}$ a conjugate of $\generateur{u}$, and $\conj{\prefix{u}}$ the prefix of $\conj{\generateur{u}}$ of length $\longueur{\prefix{u}}$.

\begin{definition} [$\beta$-mates, \cite{DFT1}]
A $\beta$-mate is a $u$-close double square \dsuv\ that is not an $\alpha$-mate and where $V$ is a cyclic shift of $U$.
\end{definition}

\begin{figure}[H]
\begin{align*}
\dsuu = (aab) &(aab) (aab) a (aab) . (aab) (aab) (aab) a (aab) \\
\dsuv = &(aab) (aab) a (aab) (aab) . (aab) (aab) a (aab) (aab) \\ \text{and }
w = \rlap{$\overbrace{\phantom{aabaabaabaaabaabaabaabaaab}}^{\dsuu}$} aab & \underbrace{aabaabaaabaabaabaabaaabaab}_{\dsuv}
\end{align*}
  \caption{\dsuv\ is a $\beta$-mate of \dsuu.}
\end{figure}

As proved in \cite{DFT1}, a $\beta$-mate of \dsuu\ has the factorization: \[\dsuv=\conj{\generateur{u}}^{e_1-t}\conj{\prefix{u}}\conj{\generateur{u}}^{e_2+t}\conj{\generateur{u}}^{e_1-t}\conj{\prefix{u}}\conj{\generateur{u}}^{e_2+t},\] for $\conj{\generateur{u}}$, a conjugate of $\generateur{u}$, $\conj{\prefix{u}}$ the prefix of $\conj{\generateur{u}}$ of length $\longueur{\prefix{u}}$, and an integer $t, 1 \leq t \leq \lfloor \frac{e_1-e_2}{2} \rfloor$ (as the second exponent of a double square has to be smaller than the first one).

\begin{definition} [$\gamma$-mates, \cite{DFT1}]
A $\gamma$-mate is a $u$-close double square \dsuv\ where $v$ is a cyclic shift of $U$.
\end{definition}

The factorization of a $\gamma$-mate of a FS double square \dsuu\ is more complicated and will be presented in the proof on property \ref{gammas2}.

\begin{definition} [$\delta$-mates, \cite{DFT1}]
A $\delta$-mate is a $u$-close double square \dsuv\ where $\lvert v \rvert > \lvert U \rvert$.
\end{definition}

Before introducing the new mates of \dsuu, we require one definition.

\begin{definition} [trace]
Let $w = UUr$ be a word starting with a FS double square \dsuu. We define $tr(\dsuu, w)$, the trace of \dsuu\ in $w$ as $tr(\dsuu, w) = \max \{t \in \mathbb{N}, \exists r', r = \generateur{u}^tr'\}$.
\end{definition}

\begin{definition} [$\epsilon$-mates]
Let $w= UU\generateur{u}^{tr(\dsuu, w)}r$, for a possibly empty word $r$, be a word starting with a FS double square \dsuu. An $\epsilon$-mate of \dsuu\ is a FS double square \dsuv\ where $v_{[1]}$ starts after the end of $u_{[1]}$, ends before $\longueur{UU} + tr(\dsuu, w)\lvert \generateur{u} \rvert + \lvert \lcp (\generateur{u}, r) \rvert + 1$, and where $\lvert v \rvert \leq \lvert \generateur{u} \rvert$.
\end{definition}

\begin{definition} [$\eta$-mates]\label{etas1}
Let $w=UUr$ be a word starting with a FS double square \dsuu. An $\eta$-mate of \dsuu\ is a FS double square \dsuv\ where $v_{[1]}$ starts after the end of $u_{[1]}$, ends before $\longueur{u\generateur{u}^{e_1+e_2-1}p}$ for $p=\lcp (\prefix{u}\suffix{u}, \suffix{u}\prefix{u})$ and where $\longueur{v}=\longueur{\generateur{u}^n}$ for some $n\geq2$.
\end{definition}

\begin{definition} [$\zeta$-mates]\label{zetamates}
Let $w= UUr$ for a possibly empty word $r$, be a word starting with a FS double square \dsuu. A $\zeta$-mate is a FS double square \dsuv\ that starts after the end of $u_{[1]}$ and that is neither an $\epsilon$-mate nor an $\eta$-mate of \dsuu.
\end{definition}

A $\zeta$-mate of \dsuu\ is therefore a FS double square \dsuv\ where $v_{[1]}$ starts after the end of $u_{[1]}$, where $\forall n \geq 2, \longueur{v}\neq n\longueur{\generateur{u}}$ if $v_{[1]}$ ends before $u\generateur{u}^{e_1+e_2-1}p$ for $p=\lcp (\prefix{u}\suffix{u}, \suffix{u}\prefix{u})$, and where $\longueur{v} > \longueur{\generateur{u}}$ if $v_{[1]}$ ends before $\longueur{UU} + tr(\dsuu, w)\lvert \generateur{u} \rvert + \lvert \lcp (\generateur{u}, r) \rvert + 1$.

%%%%%%%%%%%%%%%%%%%%%%%%%%%%%%%%%        EXHAUSTIVITY   %%%%%%%%%%%%%%%%%%%%%%%%%%%%%%%

\section{Exhaustivity}

\begin{lemma}\label{exhaust}
Let $w$ be a word starting with a FS double square \dsuu. Let \dsuv\ be a FS double square with $s(\dsuu)<s(\dsuv)$, then either:\\
($a$) $s(\dsuv)<e(u_{[1]})$, in which case either:\\
\indent ($a_1$) \dsuv\ is an $\alpha$-mate of \dsuu: $\longueur{v}=\longueur{u}, \longueur{V}=\longueur{U}$, or \\
\indent ($a_2$) \dsuv\ is a $\beta$-mate of \dsuu: $\longueur{v}<\longueur{u}, \longueur{V}=\longueur{U}$, or\\
\indent ($a_3$) \dsuv\ is a $\gamma$-mate of \dsuu: $\longueur{v}=\longueur{U}$, or\\
\indent ($a_4$) \dsuv\ is a $\delta$-mate of \dsuu: $\longueur{v}>\longueur{U}$;\\
\indent or \\
($b$)  $s(\dsuv)\geq e(u_{[1]})$, then either:\\
\indent ($b_1$) \dsuv\ is an $\epsilon$-mate of \dsuu, or\\
\indent ($b_2$) \dsuv\ is an $\eta$-mate of \dsuu, or\\
\indent ($b_3$) \dsuv\ is an $\zeta$-mate of \dsuu.
\end{lemma}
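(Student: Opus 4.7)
The proof splits along the two location cases. Case $(b)$, where $s(\dsuv) \geq e(u_{[1]})$, is essentially definitional: by construction the $\zeta$-mate is the catch-all for double squares starting at or after $e(u_{[1]})$ that are neither $\epsilon$-mates nor $\eta$-mates, so every such \dsuv\ falls into one of the three subclasses. The substantive content lies in case $(a)$.

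For case $(a)$, set $j = s(\dsuv) - 1$ with $1 \leq j < \longueur{u}$, so \dsuv\ begins strictly inside $u_{[1]}$. The key initial observation is that at position $s(\dsuv)$ the period $\longueur{U}$ of the prefix $\dsuu = U^2$ produces, by cyclic shift, a conjugate $\conj{U}^2$ of $U^2$ as a factor of $w$; and the shorter period $\longueur{u}$ of the prefix $u^2$ extends through the trailing $\generateur{u}^{e_2}$ of the canonical factorization $\generateur{u}^{e_1}\prefix{u}\generateur{u}^{e_2}\generateur{u}^{e_1}\prefix{u}\generateur{u}^{e_2}$ to yield a conjugate $\conj{u}^2$ of $u^2$ at the same position. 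Hence at $s(\dsuv)$ there are four anchored squares $\conj{u}^2$, $v^2$, $V^2$, $\conj{U}^2$ of respective lengths $2\longueur{u}, 2\longueur{v}, 2\longueur{V}, 2\longueur{U}$.

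The classification then splits on $\longueur{v}$. When $\longueur{v} = \longueur{u}$, the equality $v^2 = \conj{u}^2$ gives $v = \conj{u}$, whose primitive root is $\conj{\generateur{u}}$, so $v = \conj{\generateur{u}}^{e_1}\conj{\prefix{u}}$ and $V = \conj{\generateur{u}}^{e_1}\conj{\prefix{u}}\conj{\generateur{u}}^{e'_2}$; the last-occurrence property of \dsuv\ forces $e'_2 = e_2$, yielding an $\alpha$-mate. When $\longueur{v} < \longueur{u}$, Fine and Wilf (Lemma \ref{fw}) applied to the periods $\longueur{v}$ and $\longueur{\generateur{u}}$ present on $v^2 \subseteq \conj{u}^2$ forces $v = \conj{\generateur{u}}^{e'_1}\conj{\prefix{u}}$ for some $0 < e'_1 < e_1$, and then $V = \conj{\generateur{u}}^{e'_1}\conj{\prefix{u}}\conj{\generateur{u}}^{e_1 + e_2 - e'_1}$, giving a $\beta$-mate with parameter $t = e_1 - e'_1$. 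When $\longueur{v} = \longueur{U}$, $v^2 = \conj{U}^2$ yields a $\gamma$-mate. When $\longueur{v} > \longueur{U}$, \dsuv\ is a $\delta$-mate by definition.

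The remaining range $\longueur{u} < \longueur{v} < \longueur{U}$ must be excluded, and this is the main obstacle. The plan is to apply Crochemore and Rytter's three squares lemma (Lemma \ref{crts}) at $s(\dsuv)$ to the primitive square $\conj{\generateur{u}}^2$ of length $2\longueur{\generateur{u}}$ together with $v^2$ and $V^2$, yielding $\longueur{\generateur{u}} + \longueur{v} \leq \longueur{V}$; combining this with $\longueur{V} \leq \longueur{U}$ and an analysis of the period-$\longueur{\generateur{u}}$ structure of $\conj{U}^2$ at $s(\dsuv)$, and invoking Property \ref{ifc} on the cores of the interrupt to exclude the surviving conjugacy classes for $v$, we derive a contradiction with $\longueur{v} > \longueur{u}$. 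Coordinating the three-squares bound with the core-of-the-interrupt machinery is the crux of the proof.
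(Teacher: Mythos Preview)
Your handling of case $(b)$ matches the paper exactly: it is purely definitional, since $\zeta$-mates are defined as the complement of $\epsilon$- and $\eta$-mates among the non-$u$-close double squares. The paper then simply cites \cite{DFT1} for case $(a)$, so you are attempting to supply what the paper omits.

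The gap is in your setup for case $(a)$. You assert that at position $s(\dsuv)=j+1$ the word $w$ carries both a conjugate square $\conj{u}^2$ and a conjugate square $\conj{U}^2$, but neither is guaranteed. The period $\longueur{u}$ of the prefix $u^2$ does \emph{not} ``extend through the trailing $\generateur{u}^{e_2}$'': one checks directly (or via the proof of Property~\ref{alphas}) that $UU$ has period $\longueur{u}$ only on the prefix of length $2\longueur{u}+\longueur{p}$ with $p=\lcp(\prefix{u}\suffix{u},\suffix{u}\prefix{u})$, so $\conj{u}^2$ sits at position $j+1$ only when $j\le\longueur{p}$, not for all $j<\longueur{u}$. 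Likewise, $\conj{U}^2$ at position $j+1$ would require $w[2\longueur{U}+1..2\longueur{U}+j]$ to continue the period $\longueur{U}$, which depends on $tr(\dsuu,w)$ and may fail already for $j=1$. Once these anchored squares are unavailable, the entire downstream case split (the equality $v=\conj{u}$ in $(a_1)$, the Fine--Wilf step in $(a_2)$, and the three-squares argument for $\longueur{u}<\longueur{v}<\longueur{U}$) loses its footing.

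The argument in \cite{DFT1} does not rely on producing $\conj{u}^2$ or $\conj{U}^2$ at $s(\dsuv)$. It works instead with the period-$\longueur{\generateur{u}}$ structure of $UU=\generateur{u}^{e_1}\prefix{u}\generateur{u}^{e_1+e_2}\prefix{u}\generateur{u}^{e_2}$ and the positions $N_1,N_2$ of the cores of the interrupt (Definition~\ref{tci}, Property~\ref{ifc}): the two factors of length $\longueur{\generateur{u}}$ ending and beginning at the core are not conjugates of $\generateur{u}$, and tracking where $v_{[1]},v_{[2]},v_{[3]}$ can and cannot contain them forces the listed length constraints. Your final paragraph gestures at this machinery, but it is needed from the outset, not only for the residual range $\longueur{u}<\longueur{v}<\longueur{U}$.
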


\begin{proof}
The proof of lemma \ref{exhaust} is provided in \cite{DFT1}, except for the ($b$) cases: the three cases that we consider are treated as just one case. Nonetheless, it is straightforward to see that the union of the $\epsilon$-mates (which have a particular definition), the $\eta$-mates (which have a particular definition) and the $\zeta$-mates (which are defined as all the non $u$-close FS double squares that are neither $\epsilon$ nor $\eta$-mates) form all of the squares that can start after $e(u_{[1]})$.
\end{proof}

\begin{definition} [family]\label{family}
Let $x$ be a word starting with a FS double square \dsuu. \\
If \dsuu\ has neither $\beta$ nor $\eta$-mates, the FS double squares that are not in the family are the $\delta$, $\epsilon$ and $\zeta$-mates of \dsuu\ and the \dsuu-family is composed of all the $\alpha$ and $\gamma$-mates of \dsuu\ that start before a FS double square that is not in the family.\\
If \dsuu\ has $\eta$-mates but no $\beta$-mates, the FS double squares that are not in the family are the $\gamma$, $\delta$ and $\zeta$-mates of \dsuu\ and the \dsuu-family is composed of the $\alpha$, $\epsilon$ and $\eta$-mates of \dsuu\ that start before a FS double square that is not in the family.\\
If \dsuu\ has $\beta$-mates, the FS double squares that are not in the family are the $\delta$ and $\zeta$-mates of \dsuu\ tand the \dsuu-family is composed of all the $\alpha, \beta, \gamma, \epsilon$ and $\eta$-mates that start before a FS double square that is not in the family.
%If \dsuu\ has no $\beta$-mates, the \dsuu\ family consists only of the $\alpha$ and $\eta$-mates of \dsuu. Otherwise, all the $\alpha, \beta, \gamma, \eta$ and $\epsilon$-mates of \dsuu\ form the \dsuu\ family.
\end{definition}

%%%%%%%%%%%%%%%%%%%%%%%%%%%%%%%%%       BOUND      %%%%%%%%%%%%%%%%%%%%%%%%%%%%%%%%%%%

\section{New bounds}

\begin{definition}
Let $w$ be a word. The number of FS double squares in $w$ is denoted $\delta (w)$.
\end{definition}

\begin{definition}
Let \dsuu\ and \dsuv\ be two FS double square of a word $w$. The number of FS double squares of $w$ that start between $s(\dsuu)$ and $s(\dsuv)$ (excluded) is denoted as $d_w(\dsuu, \dsuv)$, or $d(\dsuu, \dsuv)$ if it is clear from context.
\end{definition}

\begin{definition} [gap, tail]
Let \dsuu\ and \dsuv\ be two FS double squares. If $ut = gv$ for two words $g$ and $t$, $T(\dsuu, \dsuv) = t$ is the tail of \dsuu\ and \dsuv, and $G(\dsuu, \dsuv) = g$ is their gap.
\end{definition}

The next theorem is the main result of this article. Its proof is intended as a blueprint for the upcoming sections.

\begin{theorem}\label{fsdsa}
There are at most $\frac{1}{2}n$ FS double squares in a word of length $n$.
\end{theorem}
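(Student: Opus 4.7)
The plan is strong induction on $n = |w|$. The base case $n \leq 1$ is immediate since $\delta(w) = 0$. For the inductive step, assume the bound for all strictly shorter words and consider $w$ with $\delta(w) \geq 1$. Let \dsuu\ be the leftmost FS double square of $w$. If $s(\dsuu) > 1$, then every FS double square of $w$ is also a FS double square of the suffix $w[s(\dsuu),n]$, so the inductive hypothesis on that shorter word gives $\delta(w) \leq (n - s(\dsuu) + 1)/2 \leq n/2$. Hence we may assume $s(\dsuu) = 1$.

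Let $F_1 = \dsuu, F_2, \ldots, F_k$ be the \dsuu-family from Definition \ref{family}, and let $P$ denote the starting position of the first FS double square of $w$ that does not belong to this family, with the convention $P = n+1$ if no such square exists. By Lemma \ref{exhaust} together with the definition of the family, every FS double square of $w$ whose starting position lies in $\{1, \ldots, P-1\}$ is one of the $F_i$, so
\[
\delta(w) \;\leq\; k + \delta\bigl(w[P,\,n]\bigr) \;\leq\; k + \frac{n - P + 1}{2},
\]
the second inequality being the inductive hypothesis applied to $w[P,n]$. The inductive step therefore closes provided one establishes the key inequality
\[
P \;\geq\; 2k + 1. \qquad (\star)
\]

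To prove $(\star)$, the plan is to amortize the family at rate two per member along the tails $T(F_i, F_{i+1})$. Concretely, by case analysis on the types of $F_i, F_{i+1} \in \{\alpha,\beta,\gamma,\epsilon,\eta\}$, one shows that $\longueur{T(F_i, F_{i+1})} \geq 2$ for each consecutive pair, and that $F_{k+1}$ is pushed far enough past $e(F_k)$ to absorb the remaining two characters of $(\star)$ even when $F_{k+1}$ is a $\delta$- or $\zeta$-mate. For $\alpha$- and $\gamma$-type transitions, Property \ref{ifc} anchors the core of the interrupt within each length-$\longueur{\generateur{u}}$ window and prevents consecutive conjugates of $\generateur{u}$ from being packed more tightly than two characters apart, refining the argument of \cite{DFT1}. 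For transitions involving $\beta$- or $\epsilon$-mates, the new amortization announced in the introduction kicks in: the existence of a $\beta$-mate, through the interplay of its factorization with Property \ref{ifc}, annihilates a block of otherwise admissible $\epsilon$-mates, and the $\epsilon$-mates that remain each contribute tail length at least two. This is exactly the ingredient that upgrades the $\frac{11}{6}n$ bound of \cite{DFT1} to $\frac{3}{2}n$, equivalently $\delta(w) \leq n/2$.

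The main obstacle is $(\star)$, and inside it the joint bookkeeping of $\beta$- and $\epsilon$-mates. Any individual transition can be handled using Property \ref{ifc} and Lemma \ref{dft}, but a naive summation of per-type tail lower bounds yields only $\longueur{T(F_i,F_{i+1})} \geq 1$ in certain mixed configurations, reproducing at best the earlier $\frac{11}{6}n$ estimate. Pushing the tail bound from $1$ to $2$ uniformly requires arguing that every configuration whose family step would have tail exactly $1$ is either forbidden by the $\beta$/$\epsilon$ interaction or compensated by another step of tail at least $3$ elsewhere in the family. Executing this compensation, together with the per-type tail estimates and the boundary analysis of $F_k \to F_{k+1}$, is what the subsequent sections of the paper must carry out to complete the proof.
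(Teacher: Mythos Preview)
Your induction hypothesis is too weak, and as a consequence your key inequality $(\star)$ is simply false. Consider the case where \dsuu\ has a $\delta$-mate \dsuv\ starting at position $2$ (this is perfectly possible: $\delta$-mates are $u$-close and can start anywhere inside $u_{[1]}$). By Definition~\ref{family} the \dsuu-family then contains only \dsuu\ itself, so $k=1$ and $P=2$, but $(\star)$ demands $P\geq 3$. More generally, consecutive $\alpha$-mates are cyclic shifts by one position, so both their gap and their tail equal $1$; your proposed per-step bound $\longueur{T(F_i,F_{i+1})}\geq 2$ is already wrong within a run of $\alpha$-mates, and no amount of $\beta$/$\epsilon$ compensation will fix that.

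The paper avoids this by carrying a \emph{stronger} invariant: $\delta(w)\leq\frac{1}{2}\lvert w\rvert-\frac{1}{2}\lvert u\rvert$, where $u$ is the short root of the leading FS double square (Lemma~\ref{induction}). The extra $-\frac{1}{2}\lvert u\rvert$ term is precisely what makes the bookkeeping close. With it, the quantity to control becomes $d(\dsuu,\dsuv)-\frac{1}{2}T(\dsuu,\dsuv)$, where $T$ is the \emph{tail} (how far $v_{[1]}$ ends past $u_{[1]}$), not the gap in starting positions. A $\delta$-mate starting at position $2$ has tiny gap but enormous tail ($T\geq\lvert U\rvert$ by Property~\ref{deltas}), and that tail is what pays for the family. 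Once you strengthen the hypothesis this way, the case analysis is not ``tail $\geq 2$ between consecutive family members'' but rather a global bound on the family size versus $\frac{1}{2}T(\dsuu,\dsuv)$ for the first \dsuv\ \emph{outside} the family; this is Lemma~\ref{allthefamilies}, proved configuration by configuration in Section~8.
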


\begin{proof}
Let $w$ be a word starting with a FS double square \dsuu. We provided a list of double squares relative to \dsuu: the $\alpha, \beta, \gamma, \delta, \epsilon$ and $\zeta$-mates of \dsuu\ and the different ways they can form families. We proved the exhaustivity of that list in \ref{exhaust}. Lemma \ref{induction} says that if there is another FS double square \dsuv\ in $w$, and for $w'$ the suffix of $w$ that starts with \dsuv, $\delta (w') \leq \frac{1}{2} \lvert w' \rvert - \frac{1}{2} \lvert v \rvert$ implies that $\delta (w) \leq \frac{1}{2} \lvert w \rvert - \frac{1}{2} \lvert u \rvert + d_w(\dsuu, \dsuv) - \frac{1}{2}T(\dsuu, \dsuv)$. \\
Lemmas \ref{allthefamilies} prove that, for all the possible configurations of those $\alpha, \beta\, \gamma, \delta, \epsilon, \eta$ and $\zeta$-mates of \dsuu, either $\delta (w) \leq \frac{1}{2} \lvert w \rvert - \frac{1}{2} \lvert u \rvert$ or there is a FS double square of $w$, \dsuv,  that verifies $d_w(\dsuu, \dsuv) - \frac{1}{2}T(\dsuu, \dsuv) \leq 0$. Because of the factorization of FS double squares, their minimal length is $10$, and $w'$, the suffix of a word $w$ that starts with the rightmost FS double square \dsuv\ of $w$ has $\delta (w') \leq \frac{1}{2} (\lvert w' \rvert - \longueur{v})$.
 We then use backward induction, by applying the hypothesis to the starting point of the different suffixes of $w$ that start with a FS double squares. 
 \end{proof}

\begin{corollary}
There are less than $\frac{3}{2}n$ distinct squares in a word of length $n$.
\end{corollary}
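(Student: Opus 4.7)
The plan is to deduce the corollary from Theorem \ref{fsdsa} and Fraenkel and Simpson's Double Square Lemma (\ref{fsdsb}) by an elementary counting argument. Every distinct square of $w$ has a well-defined last occurrence, so I would partition the distinct squares of $w$ according to the starting position of their last occurrence. Lemma \ref{fsdsb} says that at most two distinct squares share a last-occurrence starting position, and by the very definition of an FS double square a position carries two last occurrences precisely when it is the starting position of an FS double square in $w$.

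Concretely, I would let $d=\delta(w)$ count the starting positions that carry two last occurrences and let $s$ count those that carry exactly one. Then the number of distinct squares of $w$ equals exactly $s+2d$. Since every square has length at least $2$, its last occurrence must start at a position in $\{1,\dots,n-1\}$; as distinct last occurrences have distinct starting positions, this gives
\[ s + d \;\leq\; n - 1. \]
Rewriting $s+2d = (s+d) + d$ and applying Theorem \ref{fsdsa} to bound $d \leq n/2$ then yields
\[ s + 2d \;\leq\; (n-1) + \tfrac{n}{2} \;=\; \tfrac{3n}{2} - 1 \;<\; \tfrac{3n}{2}, \]
which is the claim.

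There is essentially no obstacle at this stage: all the combinatorial work has been absorbed into Theorem \ref{fsdsa}, and the corollary reduces to a one-line calculation once the bookkeeping is set up. The only point worth flagging is the strict inequality $s+d\leq n-1$ (rather than $n$), which is what upgrades the non-strict bound $s+2d\leq 3n/2$ into the strict inequality asserted by the corollary.
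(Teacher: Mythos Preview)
Your proof is correct and follows essentially the same counting argument as the paper: bound the number of positions carrying two last occurrences by Theorem~\ref{fsdsa}, then add one contribution per remaining position. Your version is in fact slightly more careful than the paper's terse proof, since you explicitly use $s+d\le n-1$ (last occurrences of squares cannot start at position $n$) to secure the \emph{strict} inequality, whereas the paper's one-line argument as written only immediately yields $\le \tfrac{3n}{2}$.
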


\begin{proof}
If $\lvert w \rvert = n$, by \ref{fsdsa}, there are at most $\frac{n}{2}$ distinct double squares in $w$. If at each position where a FS double square does not start, a square does, $w$ has less than $\frac{3}{2}n$ distinct squares.
\end{proof}

%%%%%%%%%%%%%%%%%%%%%%%%%%%%%%%%%       INDUCTION      %%%%%%%%%%%%%%%%%%%%%%%%%%%%%%%%%%%

\section{Induction}\label{section5}

The induction relies on the number of FS double squares that start between the starting points of two given FS double squares \dsuu\ and \dsuv , $d(\dsuu, \dsuv)$.

\begin{lemma}\label{induction} [Induction hypothesis]
Let $w$ be a word starting with a FS double square \dsuu. Let \dsuv\ be another FS double square of $w$ that ends after position $\longueur{u}+1$, and $w'$ be the suffix of $w$ that starts with \dsuv. Then $\delta (w') \leq \frac{1}{2} \lvert w' \rvert - \frac{1}{2} \lvert v \rvert$ implies that $\delta (w) \leq \frac{1}{2} \lvert w \rvert - \frac{1}{2} \lvert u \rvert + d(\dsuu, \dsuv) - \frac{1}{2}T(\dsuu, \dsuv)$.
\end{lemma}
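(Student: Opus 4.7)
The plan is to partition the FS double squares of $w$ according to their starting position relative to $s(\dsuv)$, apply the hypothesis to the suffix $w'$, and close with the length identity forced by $ut=gv$. Concretely, I would first establish the decomposition
\[\delta(w) = d(\dsuu,\dsuv) + \delta(w'),\]
where the first term counts the FS double squares of $w$ starting strictly before $s(\dsuv)$ (including $\dsuu$ itself, at position $1$), and the second counts those starting at position $\geq s(\dsuv)$. I would justify that the latter are in bijection with the FS double squares of $w'$: a square factor at position $p \geq s(\dsuv)$ in $w$ is the same as a factor of $w'$ at position $p - s(\dsuv)+1$, and since $w$ and $w'$ share the same right endpoint, the rightmost occurrence of such a square in $w$ sits at the corresponding position in $w'$ (earlier occurrences inside the removed prefix do not affect the ``last occurrence'' condition). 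Hence being a FS double square is preserved in both directions for starting positions $\geq s(\dsuv)$.

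Next, I would apply the hypothesis $\delta(w') \leq \tfrac{1}{2}\longueur{w'} - \tfrac{1}{2}\longueur{v}$ together with two length identities: $\longueur{w'} = \longueur{w} - \longueur{G(\dsuu,\dsuv)}$, since $w'$ is obtained from $w$ by deleting exactly the gap prefix $g = G(\dsuu,\dsuv)$ of length $s(\dsuv)-1$; and $\longueur{g} + \longueur{v} = \longueur{u} + \longueur{T(\dsuu,\dsuv)}$, by comparing the lengths of both sides of $ut = gv$, which is well-defined thanks to the hypothesis that $\dsuv$ ends past position $\longueur{u}+1$. Substituting collapses the estimate to
\[\delta(w) \leq \tfrac{1}{2}\longueur{w} - \tfrac{1}{2}\longueur{u} + d(\dsuu,\dsuv) - \tfrac{1}{2}T(\dsuu,\dsuv),\]
the target bound --- and in fact with equality, so that no slack is lost in passing from $w'$ to $w$.

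The main (and essentially only) non-routine step is the bijection argument: one has to verify that the ``last occurrence starting at the same position'' condition descends in both directions when passing from $w$ to its suffix $w'$. Once that is in place, the rest is a short algebraic manipulation driven by the gap/tail identity; the tightness of the resulting inequality is consistent with the subsequent mate-by-mate analyses needing only to control the sign of $d(\dsuu,\dsuv) - \tfrac{1}{2}T(\dsuu,\dsuv)$.
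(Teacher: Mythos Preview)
Your proposal is correct and follows essentially the same route as the paper: decompose $\delta(w)=d(\dsuu,\dsuv)+\delta(w')$, apply the hypothesis on $w'$, and collapse via the length identities coming from $ut=gv$ and $\lvert w'\rvert=\lvert w\rvert-\lvert g\rvert$. The paper simply asserts $\delta(w)=\delta(w')+d(\dsuu,\dsuv)$ without the bijection justification you provide, so your write-up is slightly more detailed but otherwise identical in substance.
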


\begin{proof}
For convenience, write $ut = gv$ ($t=T(\dsuu, \dsuv), g=G(\dsuu, \dsuv)$).
Suppose that $\delta (w') \leq \frac{1}{2} \lvert w' \rvert - \frac{1}{2} \lvert v \rvert$. Because $\delta(w) = \delta(w') + d(\dsuu, \dsuv)$ , $\delta(w) \leq  \frac{1}{2} \lvert w' \rvert - \frac{1}{2} \lvert v \rvert +d(\dsuu, \dsuv) = \frac{1}{2}((\longueur{gw'} -\longueur{g}) - (\longueur{ut} - g))+d(\dsuu, \dsuv)= \frac{1}{2}(\longueur{w}-\longueur{u} -\longueur{t})+d(\dsuu, \dsuv)$.
\end{proof}

We need to show that either there is a FS \dsuv\ (that we can choose) such that $d(\dsuu, \dsuv) - \frac{1}{2}T(\dsuu, \dsuv) \leq 0$ or, that $\delta (w) \leq \frac{1}{2} \lvert w \rvert - \frac{1}{2} \lvert u \rvert$ if there is none. As in \cite{DFT1}, the case where there is none corresponds to the case where all the double squares of $w$ are in the \dsuu-family: we will prove that if $w$ starts with \dsuu\ then $\sigma(w) \leq \frac{\lvert w \rvert - \lvert u \rvert}{2}$.
We will then prove that if there is at least one FS double square that is not in the \dsuu-family, the first one of them, \dsuv, verifies $d(\dsuu, \dsuv) - \frac{1}{2}T(\dsuu, \dsuv) \leq 0$.

%%%%%%%%%%%%%%%%%%%%%%%%%%%%%%%%%       CASES      %%%%%%%%%%%%%%%%%%%%%%%%%%%%%%%%%%%

\section{Handling the different families}

\begin{lemma}\label{allthefamilies}
Let $w$ be a word starting with a FS double square \dsuu. For all the possible configurations of $\alpha, \beta\, \gamma, \delta, \epsilon, \zeta$ and $\eta$-mates of \dsuu, either $\delta (w) \leq \frac{1}{2} \lvert w \rvert - \frac{1}{2} \lvert u \rvert$ or there is a FS double square of $w$, \dsuv,  that verifies $d(\dsuu, \dsuv) - \frac{1}{2}T(\dsuu, \dsuv) \leq 0$
\end{lemma}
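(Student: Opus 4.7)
\medskip

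The plan is to organize the proof according to the three possible family configurations identified in Definition \ref{family}: (C1) \dsuu\ has neither $\beta$- nor $\eta$-mates; (C2) \dsuu\ has $\eta$-mates but no $\beta$-mates; (C3) \dsuu\ has $\beta$-mates. In each configuration I split the argument into two sub-cases: (a) every FS double square of $w$ starting after $s(\dsuu)$ belongs to the \dsuu-family (``closed family'' sub-case); (b) at least one FS double square of $w$ lies outside the family (``open family'' sub-case). Lemma \ref{exhaust} guarantees that these two sub-cases are exhaustive, and the three configurations partition all possibilities.

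In the closed family sub-case the target is the direct bound $\delta(w) \leq \tfrac{1}{2}(|w|-|u|)$. I would first bound the number of family members using the structural constraints recalled in Section \ref{section3}: $\alpha$-mates correspond to cyclic shifts of $\generateur{u}$ and are at most $|\generateur{u}|-1$; $\beta$-mates are indexed by an integer $t$ with $1 \leq t \leq \lfloor(e_1-e_2)/2\rfloor$; $\gamma$-mates start inside $u_{[1]}$ and are further restricted by Property \ref{ifc} applied to the cores of the interrupt; $\eta$- and $\epsilon$-mates are confined to narrow windows whose lengths are controlled by the canonical factorization of Lemma \ref{lams}. Summing the individual contributions and comparing them against $|w|-|u|$ (whose lower bound is driven by the fact that every family member pushes the right end of $w$ further right by at least twice its own contribution, thanks to the factorization of an FS double square) yields the desired inequality.

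In the open family sub-case I choose \dsuv\ to be the leftmost FS double square of $w$ that is not in the \dsuu-family. By construction, every FS double square counted in $d(\dsuu, \dsuv)$ is a family member, so it is bounded by the family-counting estimates just described. On the other hand, the type of \dsuv\ (one of $\delta$, $\zeta$, or in cases (C1)/(C2) also $\epsilon$ or $\gamma$) forces a lower bound on $T(\dsuu, \dsuv)$: a $\delta$-mate directly yields $T(\dsuu, \dsuv) > |U|-|u|$ since $|v|>|U|$; a $\zeta$-mate forces either a long $|v|$ or, via the length-mismatch with $n|\generateur{u}|$, a displacement that Corollary \ref{sp} translates into a tail of at least $|\generateur{u}|$; the excluded $\gamma$- and $\epsilon$-mates in configurations (C1)/(C2) force tails of size at least $|\generateur{u}|$ by the definition of $tr(\dsuu, w)$ and Property \ref{ifc}. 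Combining the upper bound on $d$ with the lower bound on $T$ yields $2\, d(\dsuu, \dsuv) \leq T(\dsuu, \dsuv)$.

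The main obstacle, anticipated by the introduction, is configuration (C3), where $\beta$- and $\epsilon$-mates are both in the family and must be counted together. A naive count overestimates the number of $\epsilon$-mates coexisting with $\beta$-mates; the amortization announced in the abstract uses Property \ref{ifc} to align the cores of the interrupt of \dsuu\ with those of each candidate $\epsilon$-mate, so that the occurrence of a $\beta$-mate at shift $t$ blocks a specific set of $\epsilon$-positions. Making this bookkeeping precise and checking that the inequality $2\, d(\dsuu, \dsuv) \leq T(\dsuu, \dsuv)$ survives the boundary cases where $e_1-e_2$ is small is the delicate step; configurations (C1) and (C2) then reduce to arithmetic on $e_1, e_2$ and to the length bounds already established in \cite{DFT1}. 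The upcoming subsections of Section~7 are organized around exactly this case split, so the proof of Lemma \ref{allthefamilies} itself amounts to dispatching each configuration to its dedicated lemma.
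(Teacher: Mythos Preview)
Your overall architecture matches the paper's: Lemma \ref{allthefamilies} is indeed proved by splitting along the three family configurations of Definition \ref{family}, further splitting into ``all FS double squares are in the family'' versus ``there is a first non-family \dsuv'', and then dispatching each case to a dedicated lemma. You say exactly this in your last sentence, and that is precisely what the paper does.

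Where your sketch diverges from what actually makes the dispatch work is in two places. First, you never invoke the incompatibility properties that the paper uses to prune the case tree: Property \ref{gammaeta} (no $\gamma$- and $\eta$-mates simultaneously) and Property \ref{gammas2} (no $\gamma$- and $\epsilon$-mates simultaneously). Without these, your configuration (C3) has to handle combinations such as $\beta+\gamma+\epsilon$ or $\beta+\gamma+\eta$ that the paper never faces; and your description of the possible types of the first non-family \dsuv\ in (C1)/(C2) is off (in (C1) a $\gamma$-mate is \emph{in} the family, and in (C2) a $\gamma$-mate cannot exist at all).

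Second, the quantitative estimates you sketch for the open-family sub-case are too weak to close the inequality $2\,d(\dsuu,\dsuv)\le T(\dsuu,\dsuv)$. Saying a $\delta$-mate gives $T>|U|-|u|$ is not enough: the paper needs and proves $T\ge |U|$ (Property \ref{deltas}), and when $\eta$-mates are present it needs the much stronger $T\ge (e_1+e_2+2)|\generateur{u}|+2|\prefix{u}|$ (Property \ref{deltaeta}). Likewise, ``tail at least $|\generateur{u}|$'' for a $\zeta$-mate is far short of the required $T\ge |U|-|\generateur{u}|+|p|$ (Property \ref{zetas1}) or its strengthened variants (Properties \ref{zetagama}, \ref{zetaeta}). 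With only your stated bounds, the comparison against a family containing $\lfloor (e_1-e_2)/2\rfloor(|p|+|s|+1)$ $\beta$-mates would fail whenever $e_1$ is large relative to $e_2$. The amortization you allude to for (C3) is carried out in the paper as Lemma \ref{epsilonsall} and Property \ref{betaetatous}, and it hinges on these sharper tail bounds together with the trace bound of Property \ref{betas2}; your plan does not yet contain the ingredients that make it go through.
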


\begin{proof}
By definition of the families, if \dsuu\ has no $\beta$ nor $\eta$-mates, then the \dsuu-family is composed only of the $\alpha$ and $\gamma$-mates of \dsuu.
\begin{itemize}
\item The case where \dsuu\ has only $\alpha$-mates in its family is treated in lemma \ref{alphaonly} if there are no other FS double squares in $w$ and in lemma \ref{alphaothers} if there is any.
\item The case where \dsuu\ has only $\alpha$ and $\gamma$-mates in its family (or only $\gamma$-mates) is treated in lemma \ref{gammaonly} if there are no other FS double squares in $w$ and in lemma \ref{gammaothers} if there is any: albeit those lemmas are presented for families composed of $\alpha, \beta$ and $\gamma$-mates, their proofs rely on the properties induced by the presence of $\gamma$-mates, and as such, encompass the cases where \dsuu\ only has $\alpha$ and $\gamma$-mates in its family.
\end{itemize}
By definition of the families, if \dsuu\ has no $\beta$ but has $\eta$-mates, then the \dsuu-family is composed of the $\alpha$, $\epsilon$ and $\eta$-mates of \dsuu\ (the case with only $\alpha$ and $\epsilon$-mates is treated in lemma \ref{alphaothers}). By property \ref{gammaeta}, there cannot be $\gamma$ and $\eta$-mates at the same time. 

\begin{itemize}
\item The case where \dsuu\ has only $\alpha$ and $\eta$-mates in its family (or only $\eta$-mates) is treated in lemma \ref{alphaetas} if there are no other FS double squares in $w$ and in lemma \ref{alphaetasothers} if there is any.
\item The case where \dsuu\ has only $\alpha$, $\eta$ and $\epsilon$-mates in its family (or only $\eta$ and $\epsilon$-mates, or only $\epsilon$-mates) is treated in lemma \ref{alphaetaepsilonlemma} if there are no other FS double squares in $w$ and in lemma \ref{alphaetaepsilonlemmaothers} if there is any.
\end{itemize}

If \dsuu\ has $\beta$-mates, then all the $\alpha, \beta, \gamma, \epsilon$ and $\eta$-mates are in the family. By property \ref{gammas2}, there cannot be $\gamma$ and $\epsilon$-mates at the same time. By property \ref{gammaeta}, there cannot be $\gamma$ and $\eta$-mates at the same time 
\begin{itemize}

\item The case where \dsuu\ has only $\alpha$ and $\beta$-mates in its family (or only $\beta$-mates) is treated in lemma \ref{betaonly} if there are no other FS double squares in $w$ and in lemma \ref{betaothers} if there is any.
\item The case where \dsuu\ has only $\alpha, \beta$ and $\gamma$-mates in its family (or only $\beta$ and $\gamma$-mates) is treated in lemma \ref{gammaonly} if there are no other FS double squares in $w$ and in lemma \ref{gammaothers} if there is any.
\item The case where \dsuu\ has only $\alpha, \beta$ and $\epsilon$-mates in its family (or only $\beta$ and $\epsilon$-mates) is treated in lemma \ref{epsilonsall} for both the cases where there are no other FS double squares in $w$ and where there is any.

\item The case where \dsuu\ has only $\alpha, \beta$ and $\eta$-mates or only $\alpha, \beta, \epsilon$ and $\eta$-mates in its family (or only $\beta$ and $\eta$-mates, or only $\beta$ and $\epsilon$-mates, or only $\beta, \eta$ and $\epsilon$-mates) is treated in lemma \ref{epsilonetasonly} if there are no other FS double squares in $w$ and in lemma \ref{epsilonetaothers} if there is any.

\end{itemize}
\end{proof}

\begin{lemma}\label{alphaonly}[$\alpha$ only]
Let $w$ be a word starting with a FS double square \dsuu\ that has only $\alpha$-mates in its family. If all the squares of $w$ are \dsuu\ and its family, then $\delta (w) \leq \frac{1}{2} \lvert w \rvert - \frac{1}{2} \lvert u \rvert$.
\end{lemma}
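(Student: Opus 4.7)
The plan is to bound $\delta(w)$ from above by counting $\alpha$-mates, bound $|w|$ from below via the position of the rightmost $\alpha$-mate, and then combine. Since every FS double square in $w$ is either $\dsuu$ or an $\alpha$-mate of it, I would let $k$ denote the number of $\alpha$-mates, so $\delta(w) = k+1$. By Lemma \ref{fsdsb} distinct FS double squares start at distinct positions, and an $\alpha$-mate is by definition $u$-close, so its starting position lies in $\{2,\ldots,|u|\}$. Writing $s^*+1$ for the position of the rightmost $\alpha$-mate (when $k\geq 1$), the $k$ mates occupy distinct positions in the $s^*$-element set $\{2,\ldots,s^*+1\}$, giving $k \leq s^*$ and hence $\delta(w) \leq s^*+1$.

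For the length lower bound, I would exploit that every $\alpha$-mate $\dsuv$ satisfies $|V|=|U|$, so the rightmost one ends at position $s^*+2|U|$, which forces $|w| \geq s^*+2|U|$. Substituting $|U| = |u|+e_2|generateur{u}|$ from the factorization Lemma \ref{lams} (I will write $|\generateur{u}|$ for the root length), this yields $\frac{|w|-|u|}{2} \geq \frac{s^*}{2}+\frac{|u|}{2}+e_2|\generateur{u}|$.

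It then suffices to verify $s^*+1 \leq \frac{s^*}{2}+\frac{|u|}{2}+e_2|\generateur{u}|$, which rearranges to $s^* \leq |u|-2+2e_2|\generateur{u}|$. Since $s^* \leq |u|-1$ and $e_2|\generateur{u}| \geq 2$ (using $e_2 \geq 1$ together with $|\generateur{u}| \geq 2$, the latter because $\generateur{u} = \prefix{u}\suffix{u}$ with both factors non-empty), the inequality holds with room to spare. The degenerate case $k=0$ is immediate from $|w| \geq 2|U| \geq 2|u|+4$.

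I do not anticipate any real obstacle here: this lemma is the simplest base case of the family analysis, and the argument is a direct length comparison. It does not call on the Periodicity Lemma \ref{fw}, the Longest Common Border Lemma \ref{dft}, or the Core of the Interrupt Property \ref{ifc}, reflecting the fact that $\alpha$-mates preserve the lengths of both the short and long repetitions. The only small subtlety is justifying $|\generateur{u}|\geq 2$ from the factorization and cleanly handling the boundary $k=0$; the heavier machinery of the paper is reserved for the later lemmas treating $\beta$-, $\gamma$-, and $\epsilon$-mates, where the two repetitions no longer have the same length.
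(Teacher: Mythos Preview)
Your proof is correct, but it follows a different route from the paper's. The paper invokes Property~\ref{alphas} to bound the number of $\alpha$-mates by $|p|=|\lcp(\prefix{u}\suffix{u},\suffix{u}\prefix{u})|$, then applies the Longest Common Border Lemma~\ref{dft} to get $|p|<|\generateur{u}|$, and finally uses only the trivial length bound $|w|\geq 2|U|$. You instead avoid both Property~\ref{alphas} and Lemma~\ref{dft} entirely: you bound $\delta(w)$ by a crude pigeonhole on starting positions ($\delta(w)\leq s^*+1\leq |u|$), and compensate with the sharper length bound $|w|\geq s^*+2|U|$ coming from the rightmost $\alpha$-mate.

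Your argument is more self-contained and shows that this particular lemma does not actually need the core-of-the-interrupt machinery. On the other hand, the paper's tighter count $\delta(w)\leq |p|+1$ is not wasted effort: that bound is reused verbatim in the companion Lemma~\ref{alphaothers} (and throughout Section~8), where a pigeonhole bound of order $|u|$ would be far too weak against tails of order $|U|-|\generateur{u}|$. So the paper's choice is dictated by what comes next, while your approach is the minimal one for this isolated statement.
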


\begin{proof}
By property \ref{alphas}, there are at most $\lvert \lcp(\prefix{u}\suffix{u}, \suffix{u}\prefix{u}) \rvert$ $\alpha$-mates, and by lemma \ref{dft}, $\lvert \lcp(\prefix{u}\suffix{u}, \suffix{u}\prefix{u}) \rvert \leq  \lvert \generateur{u} \rvert$. Hence, there are at most $ \lvert \generateur{u} \rvert$ FS double squares in the \dsuu-family. Now, by lemma \ref{lams}, $\lvert w \rvert \geq 2 \longueur{U} =  2(e_1 + e_2)\lvert \generateur{u} \rvert + 2\lvert \prefix{u} \rvert$ and $\frac{1}{2} \lvert w \rvert - \frac{1}{2} \lvert u \rvert \geq (e_1 + e_2 - \frac{e_1}{2})\lvert \generateur{u} \rvert + \frac{1}{2} \lvert \prefix{u} \rvert > \lvert \generateur{u} \rvert$
\end{proof}

\begin{lemma}\label{alphaothers}[$\alpha$ and non $u$-family]
Let $w$ be a word starting with a FS double square \dsuu\ that has only $\alpha$-mates in its family. If there are FS double squares in $w$ other than \dsuu\ and its family, then there exists a FS double square of $w$, \dsuv , that verifies $d(\dsuu, \dsuv) - \frac{1}{2}T(\dsuu, \dsuv) \leq 0$.
\end{lemma}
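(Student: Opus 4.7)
The plan is to take \dsuv\ to be the leftmost FS double square of $w$ lying outside the \dsuu-family; such a \dsuv\ exists by hypothesis. Because the family consists only of $\alpha$-mates of \dsuu, every FS double square strictly between \dsuu\ and \dsuv\ is an $\alpha$-mate. By Lemma~\ref{exhaust}, and because \dsuu\ has neither $\beta$- nor $\eta$-mates and \dsuv\ is not an $\alpha$-mate (being outside the family), \dsuv\ must be a $\gamma$-, $\delta$-, $\epsilon$-, or $\zeta$-mate.

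Let $k$ be the number of $\alpha$-mates strictly between \dsuu\ and \dsuv, so that $d(\dsuu,\dsuv)=k+1$ in the convention of Lemma~\ref{induction}. The property on $\alpha$-mates (Property~\ref{alphas}, used already in Lemma~\ref{alphaonly}) places them at the consecutive positions $2,3,\dots,k+1$, hence $s(\dsuv)\ge k+2$ and $|g|=s(\dsuv)-1\ge k+1$. Writing $ut=gv$ we get $|t|=|g|+|v|-|u|\ge (k+1)+(|v|-|u|)$, so the target inequality $d(\dsuu,\dsuv)-\tfrac{1}{2}T(\dsuu,\dsuv)\le 0$ reduces to $|v|-|u|\ge k+1$.

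For the $\gamma$- and $\delta$-mate cases this is immediate: $|v|\ge|U|=|u|+e_2|\generateur{u}|$ gives $|v|-|u|\ge|\generateur{u}|$, and Lemma~\ref{dft} together with the bound $k\le|\lcp(\prefix{u}\suffix{u},\suffix{u}\prefix{u})|$ from Property~\ref{alphas} yields $|\generateur{u}|\ge k+2>k+1$. For the $\epsilon$- and $\zeta$-mate cases \dsuv\ starts after the end of $u_{[1]}$, so $|g|\ge|u|$ and $|t|\ge|v|$; it then suffices to show that $|v|\ge 2(k+1)$.

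The main obstacle is the $\epsilon$-mate case, where $|v|\le|\generateur{u}|$ and the naive minimum-double-square bound $|v|\ge 10$ is too weak when $k$ is large. The plan here is to play the two families against each other: the $k$ $\alpha$-mates force cyclic conjugates of $\generateur{u}$ at the beginning of $w$, while the $\epsilon$-mate imposes a period $|v|\le|\generateur{u}|$ on a window starting just past $u_{[1]}$. Applying Property~\ref{ifc} on the cores of the interrupt of \dsuu\ then rules out the conjugates of $\generateur{u}$ that are incompatible with this shorter period and caps $k$ by $\lfloor|v|/2\rfloor-1$. The $\zeta$-mate case is handled by a similar periodicity analysis, with the option of substituting a later FS double square for \dsuv\ whose tail exceeds $2d$ whenever the direct bound is tight.
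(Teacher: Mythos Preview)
Your treatment of the $\gamma$- and $\delta$-mate cases is fine, and in fact the $\gamma$ case is vacuous here: by the family definition any $\gamma$-mate preceding the first non-family FS double square would itself belong to the family, contradicting the hypothesis that the family contains only $\alpha$-mates. The paper accordingly considers only $\delta$, $\epsilon$, and $\zeta$.

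The genuine gap is in your $\epsilon$- and $\zeta$-mate treatment. Your reduction to ``it suffices that $|v|\ge 2(k+1)$'' via $|t|\ge |v|$ throws away all information about where \dsuv\ sits inside $w$, and then you try to recover by capping $k$ in terms of $|v|$. The claimed bound $k\le\lfloor|v|/2\rfloor-1$ does not follow from Property~\ref{ifc}: the number of $\alpha$-mates is controlled by $|p|=|\lcp(\prefix{u}\suffix{u},\suffix{u}\prefix{u})|$, an invariant of $\generateur{u}$, whereas $|v|$ is the short period of an unrelated small double square and may be much smaller than $|p|$. Property~\ref{ifc} only asserts that the length-$|\generateur{u}|$ windows straddling the core are not conjugates of $\generateur{u}$; it gives no inequality linking $|p|$ to $|v|$. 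For $\zeta$-mates the same reduction fails for the same reason, and ``substituting a later FS double square'' is not a move available to you: the induction step fixes \dsuv\ once and compares $d(\dsuu,\dsuv)$ to $T(\dsuu,\dsuv)$ for that choice.

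The paper never tries to bound $k$ by $|v|$. It bounds the \emph{tail} directly and compares it to the uniform bound $d(\dsuu,\dsuv)\le|p|$. For an $\epsilon$-mate, Property~\ref{epsilons} gives $T(\dsuu,\dsuv)\ge|U|+|p|-2|\generateur{u}|$, because if $v_{[1]}$ ended any earlier the entire square $vv$ (of length at most $2|\generateur{u}|$) would lie inside the $\generateur{u}$-periodic portion of $w$ and reoccur $|\generateur{u}|$ positions to the right, contradicting that \dsuv\ is a last occurrence. This already suffices when $e_1\ge 2$; when $e_1=1$ one uses in addition that the $\alpha$-count is also bounded by $|\lcp(\generateur{u},r)|<|\prefix{u}|$. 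For a $\zeta$-mate, Property~\ref{zetas1} gives $T(\dsuu,\dsuv)\ge|U|-|\generateur{u}|+|p|$ by a synchronization argument, again enough against $d\le|p|$. Your outline does not reach these tail bounds, and without them the $\epsilon$ and $\zeta$ cases remain open.
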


\begin{proof}
Let \dsuv\ be the first (leftmost) FS double square that is not in the \dsuu-family. We need to consider 3 cases: the case where \dsuv\ is a $\delta$-mate of \dsuu , where \dsuv\ is an $\epsilon$-mate of \dsuu\ and where \dsuv\ is a  $\zeta$-mate of \dsuu. In those four cases, the size of the \dsuu-family,  hence the number of FS double squares that start between \dsuu\ and \dsuv, $d(\dsuu, \dsuv)$, is at most $\lvert p \rvert < \lvert \generateur{u} \rvert$ by lemma \ref{alphaonly}. \\
\begin{itemize}
\item If \dsuv\ is a $\delta$-mate, by Property \ref{deltas}, $T(\dsuu, \dsuv) \geq \lvert U \rvert \geq 2 \longueur{p}$.

\item If \dsuv\ is an $\epsilon$-mate of \dsuu, 
%consider $\mathcal{W}$ the last $\alpha$-mate of \dsuu. Because $\mathcal{W}$ is the last member of the \dsuu-family, $T(\dsuu, \mathcal{W}) =  d(\dsuu, \dsuv) - 1$ and \dsuv\ is an $\epsilon$-mate of $\mathcal{W}$. 
by property \ref{epsilons}, ${T(\mathcal{U}, \dsuv)} > \lvert U \rvert +\longueur{p} - 2 \longueur{\generateur{u}} $.\\
If $e_1\geq2$, $\lvert U \rvert +\longueur{p} - 2 \longueur{\generateur{u}} \geq \longueur{\generateur{u}}+\longueur{p}$ and $T(\dsuu, \dsuv) \geq 2 \lvert p \rvert$. \\
If $e_1=1$ then $w$ has the prefix: 
\[ w'=\generateur{u}\prefix{u}\generateur{u}\generateur{u}\prefix{u}\generateur{u}r.\]
For $u^2$ not to be repeated, $\longueur{lcp(\generateur{u},r)}<\longueur{\prefix{u}}$. Following the proof of property \ref{alphas}, the number of $\alpha$-mates of \dsuu\ is also bounded by $\longueur{lcp(\generateur{u},r)}$, hence is bounded by $\min(\longueur{lcp(\generateur{u},r)}, \longueur{p})\leq\min(\longueur{\prefix{u}}, \longueur{p})$, while $ \lvert U \rvert +\longueur{p} - 2 \longueur{\generateur{u}} = \longueur{\prefix{u}}+\longueur{p}$.

\item If \dsuv\ is a $\zeta$-mate of \dsuu, 
%consider $\mathcal{W}$ the last $\alpha$-mate of \dsuu. As for the case above, $T(\dsuu, \mathcal{W}) =  d(\dsuu, \dsuv) - 1$ and \dsuv\ is a $\zeta$-mate of $\mathcal{W}$. B
by property \ref{zetas1}, $T(\dsuu, \dsuv) \geq \lvert U \rvert - \lvert \generateur{u} \rvert + \lvert p \rvert = (e_1+e_2-1)\lvert \generateur{u} \rvert + \lvert \prefix{u} \rvert + \lvert p \rvert \geq 2 \lvert p \rvert$ since $\longueur{p} < \longueur{\generateur{u}}$ by lemma \ref{dft}. 
\end{itemize} 

\end{proof}

\begin{lemma}\label{alphaetas}[$\alpha$ and $\eta$ only]
Let $w$ be a word starting with a FS double square \dsuu\ that has only $\alpha$ and $\eta$-mates in its family. If all the squares of $w$ are \dsuu\ and its family, then $\delta (w) \leq \frac{1}{2} \lvert w \rvert - \frac{1}{2} \lvert u \rvert$.
\end{lemma}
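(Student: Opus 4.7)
The plan is to bound $\delta(w)$ by separately counting the $\alpha$-mates and the $\eta$-mates of $\dsuu$, and then to show that $1 + \#(\alpha) + \#(\eta) \leq \frac{1}{2}(\lvert w \rvert - \lvert u \rvert)$. Because by hypothesis the only FS double squares of $w$ are $\dsuu$ together with members of its family, this establishes the lemma.

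For the $\alpha$-mates I reuse the argument from Lemma \ref{alphaonly}: Property \ref{alphas} bounds their number by $\lvert p \rvert$, where $p = \lcp(\prefix{u}\suffix{u},\suffix{u}\prefix{u})$, and Lemma \ref{dft} gives $\lvert p \rvert \leq \lvert \generateur{u} \rvert - 2$.

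The substantive step is counting the $\eta$-mates. By Definition \ref{etas1} every $\eta$-mate has $\lvert v \rvert = n \lvert \generateur{u} \rvert$ for some integer $n \geq 2$, with $v_{[1]}$ beginning strictly after position $\lvert u \rvert$ and ending no later than position $\lvert u\generateur{u}^{e_1+e_2-1}p \rvert$. Since $\lvert p \rvert < \lvert \generateur{u} \rvert$, this sandwich forces $n \leq e_1 + e_2 - 1$, so only $e_1 + e_2 - 2$ values of $n$ are admissible. To show that each admissible $n$ contributes at most one $\eta$-mate, I would exploit that the window in which an $\eta$-mate must lie is a factor of the $\lvert \generateur{u} \rvert$-periodic word with root $\generateur{u}$ carried by the $\generateur{u}^{e_1+e_2}\prefix{u}$ block of $\dsuu$ starting at position $\lvert u \rvert + 1$. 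Two squares $v^2$ of the same length placed at distinct starts of this periodic window would realize $v$ as two cyclic shifts of the same primitive root, and the Synchronization Principle (Corollary \ref{sp}) combined with Property \ref{ifc} applied to the cores of the interrupt of such a hypothetical second $\eta$-mate would give a contradiction, since the core of the interrupt of an $\eta$-mate cannot appear in a shifted conjugate position of the periodic window. This yields $\#(\eta) \leq e_1 + e_2 - 2$.

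For the final length comparison, the $\eta$-free sub-case reduces immediately to Lemma \ref{alphaonly}; otherwise the rightmost $\eta$-mate $\dsuv$ contributes a $V^2$ that extends $w$ well past $2\lvert U \rvert$, adding at least one extra $\lvert \generateur{u} \rvert$-periodic block for each additional admissible $n$. A routine bookkeeping computation then yields $\frac{1}{2}(\lvert w \rvert - \lvert u \rvert) \geq \lvert \generateur{u} \rvert + e_1 + e_2 - 3 \geq 1 + \#(\alpha) + \#(\eta)$, closing the case. The main obstacle will be the uniqueness-per-$n$ claim in the $\eta$-count: the clean route I expect to follow is via Property \ref{ifc}, which pins down where the core of the interrupt of an $\eta$-mate can sit so tightly that two $\eta$-mates of the same length at different starts cannot coexist inside the periodic window.
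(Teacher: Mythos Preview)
Your $\eta$-count is where the argument breaks. You claim that for each admissible $n$ there is at most one $\eta$-mate, and you try to justify this via the Synchronization Principle applied to ``two cyclic shifts of the same primitive root''. But an $\eta$-mate has $v=\conj{\generateur{u}}^{\,n}$ with $n\geq 2$, so $v$ is \emph{not} primitive, and two nearby starts in the $\generateur{u}$-periodic window simply give two conjugates $\conj{\generateur{u}}^{\,n}$ without any contradiction. In fact the paper's Property~\ref{etas2} and Corollaries~\ref{etas4},~\ref{etas3} show the opposite of what you assert: there is essentially a single value of $n$ (it is forced by the factorization $x=\generateur{u}^{e_1}\prefix{u}\generateur{u}^{e_1+e_2}\prefix{u}\generateur{u}^{2n-1}\prefix{u}r'$), but that single $\eta$-mate has up to $\lvert p\rvert+\lvert s\rvert-\lvert \generateur{u}\rvert+\lvert \prefix{u}\rvert$ cyclic shifts that are themselves $\eta$-mates of $\dsuu$. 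So the multiplicity of $\eta$-mates comes from shifts within one $n$, not from several admissible values of $n$; your decomposition is inverted and the uniqueness-per-$n$ step is false.

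The paper's route avoids this entirely: it invokes Property~\ref{etassimple}, which both bounds the number of $\eta$-mates by $\lvert p\rvert+\lvert s\rvert-\lvert \generateur{u}\rvert+\lvert \prefix{u}\rvert$ and gives the length lower bound $\lvert w\rvert\geq \lvert Uu\rvert+3\lvert \generateur{u}\rvert+\lvert \prefix{u}\rvert$. Adding the $\alpha$-bound $\lvert p\rvert$ and using $\lvert p\rvert+\lvert s\rvert\leq\lvert \generateur{u}\rvert$ from Lemma~\ref{dft} yields $\delta(w)\leq\lvert p\rvert+\lvert \prefix{u}\rvert\leq 2\lvert \generateur{u}\rvert$, while $\frac{1}{2}(\lvert w\rvert-\lvert u\rvert)\geq\frac{5}{2}\lvert \generateur{u}\rvert$. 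If you want to repair your argument, replace the uniqueness-per-$n$ claim by the shift count coming from Corollaries~\ref{etas4} and~\ref{etas3}, and take the length lower bound from the explicit prefix of $x$ in Property~\ref{etas2} rather than from an informal ``one extra block per admissible $n$'' heuristic.
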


\begin{proof}
By property \ref{alphas}, there are at most $\lvert \lcp(\prefix{u}\suffix{u}, \suffix{u}\prefix{u}) \rvert$ $\alpha$-mates, and by property \ref{dft}, $\lvert \lcp(\prefix{u}\suffix{u}, \suffix{u}\prefix{u}) \rvert + \lvert \lcs(\prefix{u}\suffix{u}, \suffix{u}\prefix{u}) \rvert = \longueur{p} + \longueur{s} \leq  \lvert \generateur{u} \rvert$. \\
By property \ref{etassimple}, \dsuu\ has at most $\longueur{p}+\longueur{s}-\longueur{\generateur{u}}+\longueur{\prefix{u}}$ $\eta$-mates and $\longueur{w} \geq \longueur{Uu} + 3\longueur{\generateur{u}}+\longueur{\prefix{u}}$.\\
It follows that $\delta (w) \leq \longueur{p}+\longueur{\prefix{u}} \leq 2\longueur{\generateur{u}}$ while $\frac{1}{2} \lvert w \rvert - \frac{1}{2} \lvert u \rvert \geq  \frac{5}{2} \longueur{\generateur{u}}$.

\end{proof}

\begin{lemma}\label{alphaetasothers}[$\alpha$, $\eta$ and a non $u$-family]
Let $w$ be a word starting with a FS double square \dsuu\ that has only $\alpha$ and $\eta$-mates in its family. If there are FS double squares in $w$ other than \dsuu\ and its family, then there exists a FS double square of $w$, \dsuv , that verifies $d(\dsuu, \dsuv) - \frac{1}{2}T(\dsuu, \dsuv) \leq 0$.
\end{lemma}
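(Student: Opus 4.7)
The plan is to mirror the structure of Lemma \ref{alphaothers}. Let \dsuv\ be the leftmost FS double square of $w$ outside the \dsuu-family. By Lemma \ref{exhaust} together with the hypotheses, \dsuv\ is neither an $\alpha$- nor an $\eta$-mate (both belong to the family), nor a $\beta$-mate (excluded by hypothesis), nor a $\gamma$-mate (ruled out by Property \ref{gammaeta} in the presence of $\eta$-mates). Thus \dsuv\ is a $\delta$-, $\epsilon$-, or $\zeta$-mate of \dsuu; in the $\epsilon$-case, Definition \ref{family} forces \dsuv\ itself to be the first non-family square and guarantees that no $\epsilon$-mate starts before it.

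Next I would bound $d(\dsuu, \dsuv)$ by the total family size, reusing the count already established in the proof of Lemma \ref{alphaetas}: at most $\longueur{p}$ $\alpha$-mates (Property \ref{alphas}) and at most $\longueur{p} + \longueur{s} - \longueur{\generateur{u}} + \longueur{\prefix{u}}$ $\eta$-mates (Property \ref{etassimple}), which together with $\longueur{p} + \longueur{s} \leq \longueur{\generateur{u}}$ from Lemma \ref{dft} give $d(\dsuu, \dsuv) \leq \longueur{p} + \longueur{\prefix{u}}$. Then I would verify $T(\dsuu, \dsuv) \geq 2(\longueur{p} + \longueur{\prefix{u}})$ in each of the three remaining cases: the $\delta$-case from $T \geq \longueur{U}$ (Property \ref{deltas}); the $\zeta$-case from $T \geq (e_1+e_2-1)\longueur{\generateur{u}} + \longueur{\prefix{u}} + \longueur{p}$ (Property \ref{zetas1}); and the $\epsilon$-case from Property \ref{epsilons}, mimicking the split on $e_1$ from Lemma \ref{alphaothers}.

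The main obstacle will be the boundary configurations --- the $\epsilon$-case with $e_1 = 1$ and the $\zeta$-case with $e_1 = e_2 = 1$ --- where the generic $T$-bound only just matches the $d$-bound. The way out is to extract quantitative lower bounds on $e_1$ and $e_2$ from the mere existence of an $\eta$-mate. Unpacking Definition \ref{etas1}, an $\eta$-mate of \dsuu\ has a short square of length at least $2\longueur{\generateur{u}}$, starts after position $\longueur{u}+1$, and ends at or before position $\longueur{u\generateur{u}^{e_1+e_2-1}p}$; this forces a nontrivial lower bound on $e_1 + e_2$ that eliminates the tight configurations. With those ruled out, each of the three $T$-bounds comfortably exceeds $2(\longueur{p} + \longueur{\prefix{u}})$, using $\longueur{p}, \longueur{\prefix{u}} < \longueur{\generateur{u}}$ and, if necessary, the sharper form $\longueur{p} + \longueur{s} \leq \longueur{\generateur{u}} - 2$ of Lemma \ref{dft} to close any half-unit gap.
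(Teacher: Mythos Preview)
Your approach is correct but diverges from the paper's in two ways. First, the $\epsilon$-case is vacuous here: by Definition~\ref{family}, when \dsuu\ has $\eta$-mates but no $\beta$-mates the non-family squares are exactly the $\gamma$-, $\delta$- and $\zeta$-mates, so $\epsilon$-mates are \emph{in} the family and the leftmost non-family \dsuv\ can only be a $\delta$- or $\zeta$-mate (after Property~\ref{gammaeta} rules out $\gamma$). Your sentence about ``the $\epsilon$-case'' is confused on this point, though harmlessly so. Second, for the two genuine cases the paper invokes the $\eta$-specific tail bounds (Properties~\ref{deltaeta} and~\ref{zetaeta}), which directly give $T(\dsuu,\dsuv)\geq (e_1+e_2+2)\longueur{\generateur{u}}+2\longueur{\prefix{u}}$ and $T(\dsuu,\dsuv)\geq (e_1+e_2+n-1)\longueur{\generateur{u}}+2\longueur{\prefix{u}}-\longueur{s}$, easily dominating twice the family size bound $\longueur{\generateur{u}}+\longueur{\prefix{u}}$. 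You instead use the generic bounds (Properties~\ref{deltas} and~\ref{zetas1}) and then compensate by forcing $e_1$ large from the existence of an $\eta$-mate. That works: the ready-made Corollary~\ref{etas5} already gives $e_1\geq 3$, so you need not re-derive it from Definition~\ref{etas1}; with $e_1+e_2\geq 4$ both $\longueur{U}\geq 2(\longueur{p}+\longueur{\prefix{u}})$ and $(e_1+e_2-1)\longueur{\generateur{u}}+\longueur{\prefix{u}}+\longueur{p}\geq 2(\longueur{p}+\longueur{\prefix{u}})$ hold comfortably. Your route is more elementary in that it recycles the generic tail properties, at the cost of the extra exponent argument; the paper's route bundles the $\eta$-structure into sharper tail estimates and needs no exponent bound.
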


\begin{proof}
By lemma \ref{alphaetas}, the size of the family is less than $\longueur{\generateur{u}}+\longueur{\prefix{u}}$. By property \ref{gammaeta}, there cannot be $\gamma$ and $\eta$-mates at the same time\\
Let \dsuv\ be the first (leftmost) FS double square that is not in the \dsuu-family. We need to consider 2 cases: the case where \dsuv\ is a $\delta$-mate of \dsuu, and the case where \dsuv\ is a  $\zeta$-mate of \dsuu. %In those two cases, the size of the \dsuu-family,  hence the number of FS double squares that start between \dsuu\ and \dsuv, $d(\dsuu, \dsuv)$, is the size of the \dsuu\ family and $d(\dsuu, \dsuv) \leq \longueur{p}+\longueur{\generateur{u}}$.\\
\begin{itemize}
\item If \dsuv\ is a $\delta$-mate of \dsuu, then by property \ref{deltaeta}, $T(\dsuu, \dsuv) \geq (e_1+e_2+2)\longueur{\generateur{u}} + 2 \longueur{\prefix{u}}$.
%\item If \dsuv\ is a $\epsilon$-mate of \dsuu, consider $\mathcal{W}$ the last $\alpha$-mate of \dsuu. Because $\mathcal{W}$ is the last $\alpha$-mate of \dsuu, $T(\dsuu, \mathcal{W}) \geq  d(\dsuu, \dsuv) - 1 - \longueur{\generateur{u}}$ and \dsuv\ is an $\epsilon$-mate of $\mathcal{W}$.  By property \ref{...}, $T(\mathcal{W}, \dsuv) \geq ...$ and $T(\dsuu, \dsuv) = T(\dsuu, \mathcal{W}) + T(\mathcal{W}, \dsuv) \geq 2 \lvert p \rvert$. 

\item If \dsuv\ is a $\zeta$-mate of \dsuu, then by property \ref{zetaeta}, $T(\dsuu, \dsuw)\geq (e_1+e_2+n-1)\longueur{\generateur{u}}+2\longueur{\prefix{u}} - \longueur{s}\geq (e_1+e_2)\longueur{\generateur{u}} + 2 \longueur{\prefix{u}}$.

\end{itemize}
\end{proof}

\begin{lemma}[$\alpha$, $\eta$ and $\epsilon$ only]\label{alphaetaepsilonlemma}
Let $w$ be a word starting with a FS double square \dsuu\ that has only $\alpha$, $\epsilon$ and $\eta$-mates in its family. If all the squares of $w$ are \dsuu\ and its family, then $\delta (w) \leq \frac{1}{2} \lvert w \rvert - \frac{1}{2} \lvert u \rvert$.
\end{lemma}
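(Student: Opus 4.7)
The plan is to parallel the proof of Lemma~\ref{alphaetas}, adding the contribution of the $\epsilon$-mates to the family accounting. Write $k = tr(\dsuu, w)$ and $q = \longueur{\lcp(\generateur{u}, r)}$, so that the trace region of \dsuu\ in $w$ spans exactly $\longueur{UU}+k\longueur{\generateur{u}}+q$ characters and $w$ contains the prefix $UU\generateur{u}^{k}\lcp(\generateur{u},r)$.

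First I would bound the family term by term. Property~\ref{alphas} yields at most $\longueur{p}$ $\alpha$-mates, and Property~\ref{etassimple} yields at most $\longueur{p}+\longueur{s}-\longueur{\generateur{u}}+\longueur{\prefix{u}}$ $\eta$-mates; by Lemma~\ref{dft} ($\longueur{p}+\longueur{s}\leq\longueur{\generateur{u}}-2$), their combined contribution is at most $\longueur{p}+\longueur{\prefix{u}}-2 < 2\longueur{\generateur{u}}$. For the $\epsilon$-mates, I would invoke the counting principle driven by Property~\ref{ifc}: since each $\epsilon$-mate has $\longueur{v}\leq\longueur{\generateur{u}}$ and its core of the interrupt must lie inside the trace window, the count is bounded by roughly $k\longueur{\generateur{u}}+q+\longueur{\generateur{u}}$.

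Next I would lower-bound $\longueur{w}$. The trace itself forces $\longueur{w}\geq\longueur{UU}+k\longueur{\generateur{u}}+q$, and whenever an $\eta$-mate is present the full FS double square \dsuv, of length $2\longueur{V}>2\longueur{v}\geq 4\longueur{\generateur{u}}$, lies beyond the end of $u_{[1]}$, giving $\longueur{w}\geq\longueur{u}+4\longueur{\generateur{u}}$. These two lower bounds reinforce one another, and combined with $e_1\geq e_2\geq 1$ yield that $\tfrac12\longueur{w}-\tfrac12\longueur{u}$ exceeds $1+A+Ep+E$, where $A$, $E$, $Ep$ denote the counts of $\alpha$-, $\eta$- and $\epsilon$-mates respectively. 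The final arithmetic is a routine combination mirroring the closing lines of Lemma~\ref{alphaetas}.

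The main obstacle I expect is obtaining a sharp enough bound on the number of $\epsilon$-mates, since here we cannot appeal to the $\beta$-amortization described in the introduction. That bound must instead come directly from Property~\ref{ifc}: the two notches $w_1, w_2$ of \dsuu\ force the core of the interrupt of any $\epsilon$-mate into a restricted set of positions within the trace, so that distinct $\epsilon$-mates are spaced apart by at least $\longueur{\generateur{u}}$ positions, controlled by the trace length divided by $\longueur{\generateur{u}}$, plus a small additive correction coming from $\lcp(\generateur{u}, r)$. Once this spacing argument is formalised (combining Property~\ref{ifc} with Lemma~\ref{lams} applied to each $\epsilon$-mate), the arithmetic falls into place and the inequality $\delta(w)\leq\tfrac12\longueur{w}-\tfrac12\longueur{u}$ follows.
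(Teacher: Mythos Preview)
Your treatment of the $\epsilon$-mates is where the argument breaks. You propose to bound them by a trace-window count of order $k\longueur{\generateur{u}}+q+\longueur{\generateur{u}}$, justified by the claim that Property~\ref{ifc} forces distinct $\epsilon$-mates to be spaced at least $\longueur{\generateur{u}}$ apart. That spacing claim is unfounded: Property~\ref{ifc} constrains factors of length $\longueur{\generateur{u}}$ relative to the core of the interrupt of \dsuu, but an $\epsilon$-mate \dsuv\ has $\longueur{v}\leq\longueur{\generateur{u}}$ and its own generator $\generateur{v}$ can be much shorter than $\generateur{u}$; nothing prevents several $\epsilon$-mates from starting at consecutive positions. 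Even granting your bound, the arithmetic would not close: with $k$ as large as $e_1-e_2$ your $\epsilon$-count contributes roughly $(e_1-e_2+1)\longueur{\generateur{u}}$ to the family, while $\tfrac12\longueur{w}-\tfrac12\longueur{u}$ grows only like $(\tfrac{e_1}{2}+e_2+\tfrac{k}{2})\longueur{\generateur{u}}$, and the residual positive terms $\longueur{p}+\tfrac12\longueur{\prefix{u}}+\tfrac12 q$ defeat the inequality.

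The paper's proof exploits precisely the interaction you set aside. The presence of an $\eta$-mate forces, via Property~\ref{etas2}, the explicit prefix $\generateur{u}^{e_1}\prefix{u}\generateur{u}^{e_1+e_2}\prefix{u}\generateur{u}^{2n-1}\prefix{u}$ with $n\ge 2$; in this structure any $\epsilon$-mate whose $v^2$ does not end past $\longueur{Uu}+\longueur{p}$ would lie inside a power of $\generateur{u}$ and hence repeat, so Property~\ref{epsilonetabis} gives the sharp bound $2\longueur{\generateur{u}}-\longueur{s}-\longueur{p}$ on the number of $\epsilon$-mates. Summing with the $\alpha$- and $\eta$-counts yields a family of size at most $2\longueur{\generateur{u}}+\longueur{\prefix{u}}$, and Corollary~\ref{etas5} ($e_1\ge 3$) together with the prefix above gives $\tfrac12\longueur{w}-\tfrac12\longueur{u}\ge(\tfrac{e_1+e_2}{2}+n-\tfrac12)\longueur{\generateur{u}}+\longueur{\prefix{u}}$, which dominates. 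You need Property~\ref{epsilonetabis} (or an equivalent $\eta$--$\epsilon$ interaction lemma), not a generic trace argument.
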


\begin{proof}
%\textcolor{red}{rewrite}
By property \ref{alphas}, there are at most $\longueur{p}$ $\alpha$-mates.\\
By property \ref{etassimple}, there are at most $\longueur{p}+\longueur{s}-\longueur{\generateur{u}}+\longueur{\prefix{u}}$ $\eta$-mates.\\
Because \dsuu\ has an $\epsilon$-mate \dsuv\ and an $\eta$-mate \dsuw, by property \ref{epsilonetabis}, \dsuu\ has at most $2\longueur{\generateur{u}}-\longueur{s}-\longueur{p}$ $\epsilon$-mates. \\
Hence the size of the family is at most $\longueur{\generateur{u}}+\longueur{p}+\longueur{\prefix{u}}\leq 2 \longueur{\generateur{u}}+\longueur{\prefix{u}}$.\\
Because \dsuu\ has an $\eta$-mate, by property \ref{etas2}, \[w= \generateur{u}^{e_1}\prefix{u}\generateur{u}^{e_1+e_2}\prefix{u}\generateur{u}^{2n-1}\prefix{u}r'\]
 for an integer $n\geq2$ and a suffix $r'$ of $w$. Also, by corollary \ref{etas5}, \dsuu's first exponent is strictly greater than 2 and $\frac{1}{2} \lvert w \rvert - \frac{1}{2} \lvert u \rvert \geq (\frac{e_1+e_2}{2}+n-\frac{1}{2})\longueur{\generateur{u}}+\longueur{\prefix{u}}\geq 2 \longueur{\generateur{u}}+\longueur{\prefix{u}}$.
\end{proof}

\begin{lemma}[$\alpha$, $\eta$, $\epsilon$ and a non-$u$-familly]\label{alphaetaepsilonlemmaothers}
Let $x$ be a word starting with a FS double square \dsuu\ that has only $\alpha$, $\eta$ and $\epsilon$-mates in its family. If there are FS double squares in $w$ other than \dsuu\ and its family, then there exists a FS double square of $x$, \dsuv , that verifies $d(\dsuu, \dsuv) - \frac{1}{2}T(\dsuu, \dsuv) \leq 0$.
\end{lemma}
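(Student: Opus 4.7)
The plan is to pick \dsuv\ as the leftmost FS double square of $w$ lying outside the \dsuu-family, and then verify $T(\dsuu,\dsuv)\geq 2d(\dsuu,\dsuv)$ by bounding each side separately. This reduces everything to a small case analysis on the type of \dsuv, so the work is in aligning the existing mate-counting estimates with the existing tail bounds.

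First I would identify the type of \dsuv. By Definition \ref{family}, the hypothesis that the family consists of $\alpha$-, $\epsilon$- and $\eta$-mates places us in the middle case, so \dsuu\ has $\eta$-mates but no $\beta$-mates; the candidates outside the family are therefore $\gamma$-, $\delta$- and $\zeta$-mates. Property \ref{gammaeta} forbids $\gamma$-mates whenever an $\eta$-mate is present, so \dsuv\ must be either a $\delta$-mate or a $\zeta$-mate of \dsuu. Splitting along these two sub-cases then matches exactly the tail-properties \ref{deltaeta} and \ref{zetaeta} tailored to the ``$\eta$-mate present'' setting.

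Second, I would import the family-size estimate from Lemma \ref{alphaetaepsilonlemma}: combining Properties \ref{alphas}, \ref{etassimple} and \ref{epsilonetabis} (the last applying because an $\epsilon$-mate and an $\eta$-mate coexist; if no $\epsilon$-mate is actually present the family is even smaller), the family has at most $\longueur{\generateur{u}}+\longueur{p}+\longueur{\prefix{u}}\leq 2\longueur{\generateur{u}}+\longueur{\prefix{u}}$ elements, which upper-bounds $d(\dsuu,\dsuv)$. Third, since \dsuu\ has an $\eta$-mate, Corollary \ref{etas5} forces $e_1\geq 3$, so $e_1+e_2\geq 4$. In the $\delta$-case, Property \ref{deltaeta} then gives $T(\dsuu,\dsuv)\geq(e_1+e_2+2)\longueur{\generateur{u}}+2\longueur{\prefix{u}}\geq 4\longueur{\generateur{u}}+2\longueur{\prefix{u}}$. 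In the $\zeta$-case, Property \ref{zetaeta} gives $T(\dsuu,\dsuv)\geq(e_1+e_2+n-1)\longueur{\generateur{u}}+2\longueur{\prefix{u}}-\longueur{s}$ for some $n\geq 2$; using $e_1+e_2\geq 4$, $n\geq 2$ and $\longueur{s}\leq\longueur{\generateur{u}}$ (from Lemma \ref{dft}) this is again at least $4\longueur{\generateur{u}}+2\longueur{\prefix{u}}$. In either case $T(\dsuu,\dsuv)\geq 2(2\longueur{\generateur{u}}+\longueur{\prefix{u}})\geq 2d(\dsuu,\dsuv)$, which is the required inequality.

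The main obstacle I expect is the $\zeta$-case: the negative $-\longueur{s}$ term in Property \ref{zetaeta}, together with the fact that $n$ can be as small as $2$, has to be absorbed by the $e_1\geq 3$ consequence of Corollary \ref{etas5}. If that corollary were weakened to $e_1\geq 2$, the $\zeta$ sub-case would be just tight and one would have to amortize further against $\epsilon$-mates via Property \ref{epsilonetabis}; it is fortunate that the same corollary that drove Lemma \ref{alphaetaepsilonlemma} is exactly what makes both tail estimates clear the family-size bound here.
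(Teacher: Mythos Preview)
Your proposal is correct and follows essentially the same route as the paper: choose \dsuv\ as the leftmost non-family FS double square, rule out $\gamma$ via Property \ref{gammaeta}, bound the family size by $2\longueur{\generateur{u}}+\longueur{\prefix{u}}$ via Lemma \ref{alphaetaepsilonlemma}, and compare against the tail bounds from Properties \ref{deltaeta} and \ref{zetaeta}, invoking Corollary \ref{etas5} for $e_1\geq 3$. In fact you are slightly more careful than the paper in the $\zeta$-case, where you keep the $-\longueur{s}$ term from Property \ref{zetaeta} and explicitly absorb it using $e_1+e_2\geq 4$ and $n\geq 2$; the paper's version silently drops that term.
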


\begin{proof}
Because \dsuu\ has an $\eta$-mate \dsuw, $\longueur{w} = n \longueur{\generateur{u}}$ for some $n$ (remark that $n$ is uniquely defined by corollary \ref{etas3}). By lemma \ref{alphaetaepsilonlemma}, the size of the family is at most $2\longueur{\generateur{u}}+\longueur{\prefix{u}}$. By corollary \ref{etas5}, \dsuu's first exponent is strictly greater than 2, and by property \ref{etas2}, $n\geq2$.\\
Let \dsuv\ be the first (leftmost) FS double square that is not in the \dsuu-family. By property \ref{gammaeta}, there cannot be $\gamma$ and $\eta$-mates at the same time. We need to consider 2 cases: the case where \dsuv\ is a $\delta$-mate of \dsuu,  where \dsuv\ is a  $\zeta$-mate of \dsuu. 
\begin{itemize}
\item If \dsuv\ is a $\delta$-mate of \dsuu, then by property \ref{deltaeta}, $T(\dsuu, \dsuv) \geq (e_1+e_2+2)\longueur{\generateur{u}} + 2 \longueur{\prefix{u}}$.
\item If \dsuv\ is a $\zeta$-mate of \dsuu, then by property \ref{zetaeta}, $T(\dsuu, \dsuv)\geq (e_1+e_2+n-1)\longueur{\generateur{u}}+2\longueur{\prefix{u}}$ and because \dsuu\ has an $\eta$-mate, by corollary \ref{etas5}, \dsuu's first exponent is strictly greater than 2.
\end{itemize}

\end{proof}

\begin{lemma}\label{betaonly}[$\alpha$ and $\beta$ only]
Let $w$ be a word starting with a FS double square \dsuu\ that has only $\alpha$ and $\beta$-mates in its family. If all the squares of $w$ are \dsuu\ and its family, then $\delta (w) \leq \frac{1}{2} \lvert w \rvert - \frac{1}{2} \lvert u \rvert$.
\end{lemma}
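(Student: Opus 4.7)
The plan is to follow the template of Lemma~\ref{alphaonly}: bound the number of FS double squares in the \dsuu-family and compare it with $\frac{1}{2}|w| - \frac{1}{2}|u|$, the latter controlled below by $|w| \geq 2|U|$.

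The first move is structural. From the factorization
\[ \dsuv = \conj{\generateur{u}}^{e_1-t}\conj{\prefix{u}}\conj{\generateur{u}}^{e_2+t}\conj{\generateur{u}}^{e_1-t}\conj{\prefix{u}}\conj{\generateur{u}}^{e_2+t} \]
of a $\beta$-mate with parameter $1 \leq t \leq \lfloor (e_1-e_2)/2 \rfloor$, the hypothesis that \dsuu\ has at least one $\beta$-mate forces $e_1 - e_2 \geq 2$, hence $e_1 \geq 3$. This is the arithmetic engine that makes $|U|$, and therefore the length budget inside $w$, large enough to absorb the additional mates introduced by the $\beta$'s.

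Next I would count. By Property~\ref{alphas} there are at most $|p|$ $\alpha$-mates with $p = \lcp(\uundeux, \udeuxun)$, and by Lemma~\ref{dft} one has $|p| \leq |\generateur{u}| - 2$. I would partition the $\beta$-mates by their parameter $t$: for each fixed $t$ the starting position of the $\beta$-mate inside $u_{[1]}$ determines the conjugate $\conj{\generateur{u}}$, and the same alignment restriction that caps the $\alpha$-mates at $|p|$ also caps the $\beta$-mates with a given $t$ by $|p|$. Summing over the $\lfloor (e_1-e_2)/2 \rfloor$ admissible values of $t$ then bounds the size of the \dsuu-family by $(1 + \lfloor(e_1-e_2)/2\rfloor)|p|$.

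Combining with $|w| \geq 2|U|$ yields $\frac{1}{2}|w| - \frac{1}{2}|u| \geq \frac{1}{2}(e_1+2e_2)|\generateur{u}| + \frac{1}{2}|\prefix{u}|$; subtracting the family bound (using $|p| \leq |\generateur{u}|$) leaves a residue of $\frac{1}{2}(3e_2-2)|\generateur{u}| + \frac{1}{2}|\prefix{u}| - 1$, which is positive since $e_2 \geq 1$ and $|\generateur{u}| \geq 2$. The hard part will be the per-$t$ bound on $\beta$-mates in the counting step: the residual coefficient $3e_2 - 2$ is already tight when $e_2 = 1$, so a cruder bound of $|\generateur{u}|$ rather than $|p|$ $\beta$-mates per $t$ would not close the inequality. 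The argument must genuinely invoke the common-border constraint of Lemma~\ref{dft} for the $\beta$-mates as well as for the $\alpha$-mates.
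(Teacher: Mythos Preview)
Your overall skeleton matches the paper's proof: bound the family, lower-bound $\frac{1}{2}|w|-\frac{1}{2}|u|$ via $|w|\geq 2|U|$, and compare. The final arithmetic you wrote is essentially the paper's. But one intermediate claim is wrong and your closing diagnosis is backwards.

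The per-$t$ bound of $|p|$ on $\beta$-mates is not correct. The $\alpha$-mates are capped at $|p|$ only because \dsuu\ is the leftmost FS double square of $w$, so only \emph{right} cyclic shifts are available. A canonical $\beta$-mate of parameter $t$ sits at position $t|\generateur{u}|+1$, strictly inside $u_{[1]}$, and admits shifts in both directions: $|s|$ to the left and $|p|$ to the right, for a total of $|p|+|s|+1$ per value of $t$ (this is exactly Property~\ref{betas}). So your family bound $(1+\lfloor(e_1-e_2)/2\rfloor)|p|$ is not a valid upper bound.

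Fortunately this does not damage the argument, and your worry at the end is misplaced. By Lemma~\ref{dft} one has $|p|+|s|+1\leq |\generateur{u}|-1<|\generateur{u}|$, so the correct family bound is at most $|p|+\lfloor(e_1-e_2)/2\rfloor\,|\generateur{u}|<(1+\lfloor(e_1-e_2)/2\rfloor)|\generateur{u}|$, which is precisely the quantity you subtract after invoking $|p|\leq|\generateur{u}|$. The residue $\frac{1}{2}(3e_2-2)|\generateur{u}|+\frac{1}{2}|\prefix{u}|$ is then positive for every $e_2\geq 1$; the ``crude'' $|\generateur{u}|$ bound per $t$ \emph{does} close the inequality, and that is what both you (implicitly) and the paper actually use. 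So drop the claimed per-$t$ bound of $|p|$, cite Property~\ref{betas} for $|p|+|s|+1$, and your proof goes through exactly as the paper's does.
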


\begin{proof}
By property \ref{alphas}, there are at most $\lvert \lcp(\prefix{u}\suffix{u}, \suffix{u}\prefix{u}) \rvert$ $\alpha$-mates, and by property \ref{betas}, $\lfloor \frac{e_1-e_2}{2} \rfloor (\lvert \lcp(\prefix{u}\suffix{u}, \suffix{u}\prefix{u}) \rvert + \lvert \lcs(\prefix{u}\suffix{u}, \suffix{u}\prefix{u}) \rvert+1)$ $\beta$-mates. The size of $w$ is  $\lvert w \rvert \geq 2(e_1 + e_2)\lvert \generateur{u} \rvert + 2\lvert \prefix{u} \rvert$, and $\frac{1}{2} \lvert w \rvert - \frac{1}{2} \lvert u \rvert \geq (\frac{e_1}{2} + e_2)\lvert \generateur{u} \rvert  > (\frac{e_1 - e_2}{2} +1)\lvert \generateur{u} \rvert$ since $e_2 \geq 1$.
\end{proof}

\begin{lemma}\label{betaothers}[$\alpha, \beta$ and non-$u$-family]
Let $w$ be a word starting with a FS double square \dsuu\ that has only $\alpha$ and $\beta$-mates in its family. If there are FS double squares in $w$ other than \dsuu\ and its family, then there exists a FS double square of $w$, \dsuv , that verifies $d(\dsuu, \dsuv) - \frac{1}{2}T(\dsuu, \dsuv) \leq 0$.
\end{lemma}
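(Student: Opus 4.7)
The plan is to take \dsuv\ to be the leftmost FS double square of $w$ that lies outside the \dsuu-family, then bound $d(\dsuu, \dsuv)$ above by the family size and $T(\dsuu, \dsuv)$ below, using the same structural properties invoked in Lemma \ref{alphaothers}, enriched to account for the $\beta$-mates now present. By Definition \ref{family}, since \dsuu\ has $\beta$-mates, the only FS double squares outside the family are the $\delta$- and $\zeta$-mates of \dsuu; moreover the hypothesis that only $\alpha$- and $\beta$-mates appear in the family means no $\gamma$-, $\epsilon$-, or $\eta$-mate of \dsuu\ exists in $w$ at all. Therefore by Lemma \ref{exhaust}, \dsuv\ is either a $\delta$-mate or a $\zeta$-mate of \dsuu.

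Since \dsuv\ is the first non-family double square, every FS double square that starts strictly between \dsuu\ and \dsuv\ is an $\alpha$- or $\beta$-mate. Reusing the counts from Lemma \ref{betaonly}, Property \ref{alphas} bounds the number of $\alpha$-mates by $\longueur{p}$ and Property \ref{betas} bounds the number of $\beta$-mates by $\lfloor (e_1-e_2)/2 \rfloor (\longueur{p} + \longueur{s} + 1)$, with $p = \lcp(\prefix{u}\suffix{u}, \suffix{u}\prefix{u})$ and $s = \lcs(\prefix{u}\suffix{u}, \suffix{u}\prefix{u})$. Lemma \ref{dft} then yields $\longueur{p} + \longueur{s} \leq \longueur{\generateur{u}} - 2$, from which the clean estimate $2\,d(\dsuu, \dsuv) \leq 2\longueur{\generateur{u}} + (e_1 - e_2)(\longueur{\generateur{u}} - 1)$ follows.

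For the tail, I would mimic the bounds used in Lemma \ref{alphaothers}: Property \ref{deltas} gives $T(\dsuu, \dsuv) \geq \longueur{U} = (e_1+e_2)\longueur{\generateur{u}} + \longueur{\prefix{u}}$ for a $\delta$-mate, and Property \ref{zetas1} gives $T(\dsuu, \dsuv) \geq (e_1+e_2-1)\longueur{\generateur{u}} + \longueur{\prefix{u}} + \longueur{p}$ for a $\zeta$-mate. Matching these against the bound on $2d$ reduces the goal $T - 2d \geq 0$ to an algebraic check that leans on $e_2 \geq 1$, on the fact that the presence of a $\beta$-mate forces $e_1 - e_2 \geq 2$, and on $\longueur{p} \leq \longueur{\generateur{u}} - 2$ from Lemma \ref{dft}.

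The tightest subcase, and the only real obstacle, is the $\zeta$-case when $e_2 = 1$: after substitution the inequality simplifies to roughly $(2e_2 - 1)\longueur{\generateur{u}} + \longueur{\prefix{u}} - \longueur{p} + (e_1 - e_2) \geq 0$, so all three standing inputs above are needed simultaneously. The $\delta$-case carries substantial slack (requiring only $e_2 \geq 1$), so once the $\zeta$-subcase is dispatched the conclusion $d(\dsuu, \dsuv) - \frac{1}{2} T(\dsuu, \dsuv) \leq 0$ follows, completing the proof.
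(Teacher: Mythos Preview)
Your argument is correct and follows essentially the same route as the paper: pick the leftmost non-family FS double square (necessarily a $\delta$- or $\zeta$-mate), bound $d(\dsuu,\dsuv)$ by the $\alpha$- and $\beta$-counts from Properties~\ref{alphas} and~\ref{betas}, bound $T(\dsuu,\dsuv)$ via Properties~\ref{deltas} and~\ref{zetas1}, and finish with an arithmetic check using $e_2\ge 1$ and $\longueur{p}<\longueur{\generateur{u}}$. The only cosmetic slip is that your displayed ``clean estimate'' $2d\le 2\longueur{\generateur{u}}+(e_1-e_2)(\longueur{\generateur{u}}-1)$ is looser than the bound $2d\le 2\longueur{p}+(e_1-e_2)(\longueur{\generateur{u}}-1)$ you actually use to reach $(2e_2-1)\longueur{\generateur{u}}+\longueur{\prefix{u}}-\longueur{p}+(e_1-e_2)\ge 0$; the latter is what you need (and the hypothesis $e_1-e_2\ge 2$ is in fact unnecessary there).
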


\begin{proof}
Let \dsuv\ be the first (leftmost) FS double square that is not in the \dsuu-family, \dsuv\ is either a $\delta$ or a $\zeta$-mate of \dsuu. By property \ref{alphas}, \dsuu\ has at most $\longueur{p}$ $\alpha$-mates and by property \ref{betas}, there are at most $\lfloor \frac{e_1-e_2}{2} \rfloor(\longueur{p}+\longueur{s}+1)$ $\beta$-mates. Summing the $\alpha$ and $\beta$-mates gives $d(\dsuu, \dsuv) \leq \lfloor \frac{e_1-e_2}{2} \rfloor \longueur{\generateur{u}} + \longueur{p}$.
\begin{itemize}
\item If \dsuv\ is a $\delta$-mate, by Property \ref{deltas}, $T(\dsuu, \dsuv) \geq \lvert U \rvert$ and $\frac{1}{2}T(\dsuu, \dsuv) \geq d(\dsuu, \dsuv)$.
\item If \dsuv\ is a $\zeta$-mate, by Property \ref{zetas1}, $T(\dsuu, \dsuv) \geq \lvert U \rvert - \lvert \generateur{u} \rvert + \lvert p \rvert = (e_1-1)\longueur{\generateur{u}} + (e_2\longueur{\generateur{u}} + \longueur{p}) \geq 2 d(\dsuu, \dsuv)$.
\end{itemize}
\end{proof}

\begin{lemma}\label{gammaonly}[$\alpha$, $\beta$ and $\gamma$ only]
Let $w$ be a word starting with a FS double square \dsuu\ that has only $\alpha$, $\beta$ and $\gamma$-mates in its family. If all the squares of $w$ are \dsuu\ and its family, then $\delta (w) \leq \frac{1}{2} \lvert w \rvert - \frac{1}{2} \lvert u \rvert$.
\end{lemma}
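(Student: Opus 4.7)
The plan is to mirror the arguments of Lemmas \ref{alphaonly} and \ref{betaonly}, extending them to accommodate the $\gamma$-mates: I will bound the three populations of mates separately, sum their counts (together with \dsuu\ itself), and compare the total against $\frac{1}{2}\longueur{w}-\frac{1}{2}\longueur{u}$ obtained from $\longueur{w}\geq 2\longueur{U}$.

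First I would appeal to property \ref{alphas} to bound the number of $\alpha$-mates of \dsuu\ by $\longueur{\lcp(\prefix{u}\suffix{u},\suffix{u}\prefix{u})}=\longueur{p}$, and to property \ref{betas} to bound the number of $\beta$-mates by $\lfloor(e_1-e_2)/2\rfloor(\longueur{p}+\longueur{s}+1)$. Lemma \ref{dft} gives $\longueur{p}+\longueur{s}\leq\longueur{\generateur{u}}-2$, so $\longueur{p}+\longueur{s}+1\leq \longueur{\generateur{u}}-1$, and the $\alpha$-plus-$\beta$ contribution is at most $\longueur{p}+\lfloor(e_1-e_2)/2\rfloor(\longueur{\generateur{u}}-1)$.

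Next I would invoke property \ref{gammas2} to bound the number of $\gamma$-mates. Because a $\gamma$-mate is $u$-close with $\longueur{v}=\longueur{U}$, the factorization forced by property \ref{gammas2} is very rigid, yielding a bound for the number of $\gamma$-mates that is linear in $\longueur{\generateur{u}}$ and $\longueur{\prefix{u}}$ and of lower order in the exponents $e_1,e_2$ than the $\beta$-bound. Adding this to the $\alpha$- and $\beta$-bounds and including the $+1$ for \dsuu\ itself yields the full family count.

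The arithmetic is then closed out using Lemma \ref{lams}, which gives $\longueur{w}\geq 2\longueur{U}=2(e_1+e_2)\longueur{\generateur{u}}+2\longueur{\prefix{u}}$, hence
\[ \tfrac{1}{2}\longueur{w}-\tfrac{1}{2}\longueur{u} \ \geq\ \left(\tfrac{e_1}{2}+e_2\right)\longueur{\generateur{u}}+\tfrac{1}{2}\longueur{\prefix{u}}. \]
The main obstacle is verifying that the total family size really does not exceed this lower bound. The delicate point is the interplay between the $\beta$- and $\gamma$-contributions: increasing $e_1-e_2$ inflates the $\beta$-bound but simultaneously inflates the right-hand side through $\frac{e_1}{2}+e_2$, while the $\gamma$-bound from property \ref{gammas2} is of lower order in the exponents and is therefore absorbed by the slack in $\frac{e_1}{2}+e_2$. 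The closing arithmetic uses $e_2\geq 1$, exactly as in the proof of Lemma \ref{betaonly}, together with the inequality $\longueur{p}+\longueur{s}\leq\longueur{\generateur{u}}-2$ to absorb the remaining additive constants.
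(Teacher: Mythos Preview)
Your proposal has a genuine gap. First, a minor point: Property~\ref{gammas2} does not bound the number of $\gamma$-mates; it only says that $\gamma$-mates exclude $\epsilon$-mates. The structural information about $\gamma$-mates is in Property~\ref{gamas}.

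The real problem is your use of $\longueur{w}\geq 2\longueur{U}$. This lower bound is too weak to absorb the $\gamma$-mates. From Property~\ref{gamas}, the shifts of a single canonical $\gamma$-mate already number $\longueur{\generateur{u}^{e_1-t-k}\prefix{u}p}+\longueur{s\generateur{u}^{e_2+t-k}}+1$, which is of order $(e_1+e_2-2k)\longueur{\generateur{u}}$, not ``lower order in the exponents'' as you claim. Since all $u$-close mates start in $u_{[1]}$, the family can have up to roughly $e_1\longueur{\generateur{u}}$ members, and $(\tfrac{e_1}{2}+e_2)\longueur{\generateur{u}}$ does not dominate this when $e_2$ is small relative to $e_1$.

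The paper's argument exploits the key fact you miss: the presence of a $\gamma$-mate forces $w$ to be much longer than $UU$. If the last canonical $\gamma$-mate \dsuv\ starts at position $t\longueur{\generateur{u}}+1$, then by Property~\ref{gamas} one has $\longueur{V}=(2(e_1+e_2)-k)\longueur{\generateur{u}}+2\longueur{\prefix{u}}$, so $\longueur{w}\geq t\longueur{\generateur{u}}+2\longueur{V}$. The paper then bounds the whole family simply by $(t+1)\longueur{\generateur{u}}$ (one FS double square per starting position up to the last shift of the last canonical $\gamma$-mate) and checks $\tfrac{1}{2}\longueur{w}-\tfrac{1}{2}\longueur{u}\geq \tfrac{t+3e_1+4e_2-2k}{2}\longueur{\generateur{u}}>(t+1)\longueur{\generateur{u}}$ using $e_1-k>t$ from Property~\ref{gamas}. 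It is this extra length of $w$, not a small $\gamma$-count, that makes the inequality close.
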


\begin{proof}
Suppose that the last canonical $\gamma$-mate starts at position $t\longueur{\generateur{u}}+1$, hence the \dsuu-family has at most $(t+1)\longueur{\generateur{u}}$ elements. Following the proof of \ref{gamas}, we get that $\longueur{w} \geq t\longueur{\generateur{u}}+ 2((2(e_1+e_2)-k)\longueur{\generateur{u}} + \longueur{\prefix{u}})$ and  $\frac{1}{2} \lvert w \rvert - \frac{1}{2} \lvert u \rvert = \frac{t+3e_1+4e_2-2k}{2}\longueur{\generateur{u}} > (t+1)\longueur{\generateur{u}}$ as $e_1-k > t$.
\end{proof}

\begin{lemma}\label{gammaothers}[$\alpha, \beta$, $\gamma$ and non-$u$-family.]
Let $w$ be a word starting with a FS double square \dsuu\ that has only $\alpha$, $\beta$ and $\gamma$-mates in its family. If there are FS double squares in $w$ other than \dsuu\ and its family, then there exists a FS double square of $w$, \dsuv , that verifies $d(\dsuu, \dsuv) - \frac{1}{2}T(\dsuu, \dsuv) \leq 0$.
\end{lemma}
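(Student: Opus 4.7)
The plan follows the template of Lemmas \ref{alphaothers} and \ref{betaothers}: pick \dsuv\ to be the leftmost FS double square of $w$ that is not in the \dsuu-family, import the family-size estimate from the ``only'' counterpart \ref{gammaonly}, and match that estimate against the tail growth forced by whichever mate-type \dsuv\ turns out to be.

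First I would narrow the mate-type of \dsuv. Since we are in the case where the family contains at least one $\gamma$-mate (the purely $\alpha$ or $\alpha\beta$ sub-cases having already been dispatched in \ref{alphaothers} and \ref{betaothers}), property \ref{gammas2} excludes \dsuv\ from being an $\epsilon$-mate and property \ref{gammaeta} excludes it from being an $\eta$-mate. By the exhaustivity Lemma \ref{exhaust}, the only possibilities left are that \dsuv\ is a $\delta$-mate or a $\zeta$-mate.

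Next I would transcribe the family-size bookkeeping from \ref{gammaonly}. Writing $t\longueur{\generateur{u}}+1$ for the starting position of the last canonical $\gamma$-mate of \dsuu, the family contains at most $(t+1)\longueur{\generateur{u}}$ FS double squares, so $d(\dsuu, \dsuv) \leq (t+1)\longueur{\generateur{u}}$. Because \dsuv\ starts strictly after this last $\gamma$-mate, the gap $G(\dsuu,\dsuv)$ is at least $t\longueur{\generateur{u}}$, and since $T(\dsuu,\dsuv) = \longueur{G(\dsuu,\dsuv)} + \longueur{v} - \longueur{u}$, this contribution propagates directly into the tail. In the $\delta$-case, property \ref{deltas} gives $T(\dsuu,\dsuv) \geq \longueur{U}$, which, combined with the structural inequality $e_1 - k > t$ inherited from the $\gamma$-mate analysis in \ref{gammaonly}, should yield $\frac{1}{2}T(\dsuu,\dsuv) \geq (t+1)\longueur{\generateur{u}} \geq d(\dsuu,\dsuv)$. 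In the $\zeta$-case, property \ref{zetas1} gives $T(\dsuu,\dsuv) \geq \longueur{U} - \longueur{\generateur{u}} + \longueur{p}$, and the same arithmetic, using $e_2 \geq 1$ and $\longueur{p} \geq 0$, delivers the required inequality.

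The main obstacle I anticipate is the $\delta$-case arithmetic: the family-size bound $(t+1)\longueur{\generateur{u}}$ is linear in $t$ while the tail lower bound $\longueur{U}$ is only linear in $e_1+e_2$, so the argument truly requires the quantitative displacement bound $e_1 - k > t$ (with $k \geq 1$) coming from the $\gamma$-mate structure rather than the weaker $e_1 > t$. A careful accounting of the parameter $k$ from the proof of \ref{gammaonly}, possibly supplemented by using the full identity $T = \longueur{G}+\longueur{v}-\longueur{u}$ rather than just the lower bound $T \geq \longueur{U}$ (so that the gap contribution $t\longueur{\generateur{u}}$ is added on top of $\longueur{v}-\longueur{u}$), will be the quantitative heart of the proof.
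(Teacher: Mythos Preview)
Your overall framing is right---pick the leftmost non-family double square, rule out $\epsilon$ and $\eta$ via Properties \ref{gammas2} and \ref{gammaeta}, and handle the remaining $\delta$ and $\zeta$ cases---and this is exactly what the paper does. The gap is in the quantitative step.

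The generic tail bounds you invoke, Property \ref{deltas} ($T \geq \lvert U\rvert$) and Property \ref{zetas1} ($T \geq \lvert U\rvert - \lvert\generateur{u}\rvert + \lvert p\rvert$), are \emph{not} strong enough here. With $d(\dsuu,\dsuv)\leq (t+1)\lvert\generateur{u}\rvert$ you would need roughly $e_1+e_2 \geq 2(t+1)$, but the $\gamma$-constraints of Property \ref{gamas} only give $t < e_1-k$ with $k>e_2$, so $t$ can be as large as $e_1-e_2-2$; when $e_1 \gg e_2$ the inequality fails. Your proposed supplement, adding the gap contribution $t\lvert\generateur{u}\rvert$ to $\lvert v\rvert - \lvert u\rvert \geq \lvert U\rvert - \lvert u\rvert = e_2\lvert\generateur{u}\rvert$, yields $T \geq (t+e_2)\lvert\generateur{u}\rvert$, which still requires $e_2 \geq t+2$ and again fails in general.

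The paper does not use \ref{deltas} or \ref{zetas1} at all in this lemma; it invokes the dedicated Properties \ref{deltagama} and \ref{zetagama}. The key idea you are missing, made explicit in the proof of \ref{deltagama}, is a \emph{nesting} step: since the non-family square \dsuw\ starts after the last $\gamma$-mate \dsuv, it is $v$-close, and since $\lvert w\rvert > \lvert U\rvert = \lvert v\rvert$, the exhaustivity Lemma \ref{exhaust} applied to \dsuv\ (not to \dsuu) forces $\lvert w\rvert \geq \lvert V\rvert = (2(e_1+e_2)-k)\lvert\generateur{u}\rvert + 2\lvert\prefix{u}\rvert$. This is essentially twice the bound you were using, and together with $e_1-t>k$ it delivers $T(\dsuu,\dsuw)\geq 2(e_2+t)\lvert\generateur{u}\rvert \geq 2(t+1)\lvert\generateur{u}\rvert$. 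The $\zeta$-case (Property \ref{zetagama}) similarly exploits the extended prefix $x''$ that the $\gamma$-mate forces on $w$ to push the end of $w_{[1]}$ much further than \ref{zetas1} alone would give. So the ``careful accounting of $k$'' you anticipate is real, but it has to pass through the $\gamma$-mate's long root $V$, not just through $U$.
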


\begin{proof}
Let \dsuv\ be the first (leftmost) FS double square that is not in the \dsuu-family, \dsuv\ is either a $\delta$ or a $\zeta$-mate of \dsuu. Suppose that the last canonical $\gamma$-mate starts at position $t\longueur{\generateur{u}}+1$, hence the \dsuu-family has at most $(t+1)\longueur{\generateur{u}}$ elements. .
\begin{itemize}
\item If \dsuv\ is a $\delta$-mate, by Property \ref{deltagama}, $\frac{1}{2}T(\dsuu, \dsuv) \geq d(\dsuu, \dsuv)$.
\item If \dsuv\ is a $\zeta$-mate, by Property \ref{zetagama}, $\frac{1}{2}T(\dsuu, \dsuv) \geq d(\dsuu, \dsuv)$.
\end{itemize}
\end{proof}

\begin{lemma}\label{epsilonsall}[$\alpha, \beta$, $\epsilon$-mates both with and without a non-$u$-family.]
If $w$ is a word starting with a FS double square \dsuu\ that has at least one $\epsilon$-mate in its family, then there is a FS double square of $w$, \dsuv,  that verifies $d(\dsuu, \dsuv) - \frac{1}{2}T(\dsuu, \dsuv) \leq 0$.
\end{lemma}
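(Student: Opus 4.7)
The plan is to exhibit \dsuv\ as the leftmost $\epsilon$-mate of \dsuu, which is guaranteed to exist by hypothesis, and to verify $d(\dsuu, \dsuv) - \tfrac{1}{2}T(\dsuu, \dsuv) \leq 0$ for that single choice. The same \dsuv\ handles both the ``with'' and ``without'' non-$u$-family sub-cases of the lemma, because it is an element of the family itself and does not depend on the existence of a $\delta$- or $\zeta$-mate further to the right. Since every $\epsilon$-mate starts strictly after the end of $u_{[1]}$, every FS double square of $w$ that starts strictly between $s(\dsuu)$ and $s(\dsuv)$ must start within $u_{[1]}$, so by Lemma \ref{exhaust} it is either an $\alpha$- or a $\beta$-mate of \dsuu.

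The next step is to bound $d(\dsuu, \dsuv)$ using Property \ref{alphas} (at most $\longueur{p}$ $\alpha$-mates) and Property \ref{betas} (at most $\lfloor (e_1-e_2)/2 \rfloor(\longueur{p}+\longueur{s}+1)$ $\beta$-mates), together with Lemma \ref{dft} ($\longueur{p}+\longueur{s}\leq \longueur{\generateur{u}}-2$), to obtain $d(\dsuu, \dsuv) \leq \longueur{p} + \lfloor (e_1-e_2)/2 \rfloor(\longueur{\generateur{u}}-1)$. For the tail I invoke Property \ref{epsilons}, exactly as in the $\epsilon$-mate branch of the proof of Lemma \ref{alphaothers}, giving
\[
T(\dsuu, \dsuv) > \longueur{U} + \longueur{p} - 2\longueur{\generateur{u}} = (e_1+e_2-2)\longueur{\generateur{u}}+\longueur{\prefix{u}}+\longueur{p}.
\]
In the context in which \ref{epsilonsall} is invoked from Lemma \ref{allthefamilies}, the family contains a $\beta$-mate, so $e_1-e_2 \geq 2$ and $e_1+e_2 \geq 2e_2+2$.

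Setting $k=\lfloor (e_1-e_2)/2\rfloor$ and substituting, the desired $2d(\dsuu, \dsuv) \leq T(\dsuu, \dsuv)$ reduces, after cancelling the common $2k\longueur{\generateur{u}}$ terms, to
\[
\longueur{p} \leq (2e_2-2)\longueur{\generateur{u}} + \longueur{\prefix{u}} + 2k.
\]
This is immediate whenever $e_2 \geq 2$, since then the right-hand side already exceeds $2\longueur{\generateur{u}}$, while $\longueur{p} < \longueur{\generateur{u}}$ by Lemma \ref{dft}. The hard part will be the boundary sub-case $e_2 = 1$, where the right-hand side is only $\longueur{\prefix{u}} + 2k$ and $\longueur{p}$ can be close to $\longueur{\generateur{u}}$. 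This is exactly the regime the $\beta/\epsilon$ amortization announced in the introduction is designed to absorb: either the presence of each $\beta$-mate rules out a definite share of potential $\epsilon$-mates (refining the naive count), or the joint existence of a long $\lcp$ $p$ and a $\beta$-mate with $e_2 = 1$ forces an additional structural constraint on $w$. I would close this remaining case by invoking that refined amortized count in place of the crude $\alpha/\beta$ bound, or, alternatively, by replacing \dsuv\ with the rightmost $\epsilon$-mate, since its tail exceeds that of the leftmost $\epsilon$-mate by the distance between their respective ends, an amount large enough to absorb the extra contribution to $d$ made by the intermediate $\epsilon$-mates. I expect this $e_2 = 1$ regime to be the principal technical hurdle; all other configurations fall out directly from the bookkeeping above.
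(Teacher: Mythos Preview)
Your overall plan matches the paper's: take \dsuv\ to be the first $\epsilon$-mate, bound $d(\dsuu,\dsuv)$ by counting only $\alpha$- and $\beta$-mates (Property~\ref{gammas2} kills $\gamma$-mates), and bound the tail via Property~\ref{epsilons}. The $e_2\ge 2$ branch is fine and is exactly what the paper does.

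The $e_2=1$ branch, however, is a genuine gap. Neither of your two proposed fixes closes it. Switching to the rightmost $\epsilon$-mate does not help: moving from the leftmost to the rightmost $\epsilon$-mate increases $T$ by the difference of \emph{ending} positions of their first short roots, but it increases $d$ by the number of intermediate $\epsilon$-mates, which is governed by their \emph{starting} positions; there is no reason the first quantity dominates twice the second. And ``the refined amortized count'' is not an argument. What the paper actually uses here is the trace-dependent half of Property~\ref{betas}: when $tr(\dsuu,w)\le\lfloor(e_1-e_2)/2\rfloor$ the $\beta$-count drops to roughly $(tr(\dsuu,w)-1)(\lvert p\rvert+\lvert s\rvert+1)+\min(\lvert\lcp(\generateur{u},r)\rvert,\lvert p\rvert)+\lvert s\rvert+1$, and this smaller count is what makes $e_2=1$ go through. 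The residual sub-case where $tr(\dsuu,w)$ hits its maximal admissible value is then finished by Property~\ref{betas2}, which forces $\lvert\lcp(\generateur{u},r)\rvert<\lvert\prefix{u}\rvert$ and hence shrinks the $\min$ term enough to close the inequality. A parity split on $e_1$ is also needed. You used only the coarse bound $\lfloor(e_1-e_2)/2\rfloor(\lvert p\rvert+\lvert s\rvert+1)$ from Property~\ref{betas}, and that bound is genuinely too weak when $e_2=1$; the missing idea is precisely to exploit the trace.
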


\begin{proof}
By property \ref{gammas2}, if \dsuu\ has $\epsilon$-mates in its family, then it has no $\gamma$-mates. By property \ref{alphas} there are at most $\longueur{p}$ $\alpha$-mates. By property \ref{betas}, if $tr(\dsuu, w) > \lfloor \frac{e_1-e_2}{2}\rfloor$ there are at most $n = \lfloor \frac{e_1-e_2}{2}\rfloor(\lvert p \rvert + \lvert s \rvert +1)$ $\beta$-mates and $n = ((tr(\dsuu, w)-1)(\lvert p \rvert + \lvert s \rvert +1) + \min (\lvert \lcp(\generateur{u}, r)\rvert, \lvert p \rvert) + \lvert s \rvert +1)^+$ if $tr(\dsuu, w) \leq \lfloor \frac{e_1-e_2}{2}\rfloor$.
%$ (\min (\lfloor \frac{e_1-e_2}{2}\rfloor, tr(\dsuu, w))-1)(\lvert p \rvert + \lvert s \rvert) + \lvert s \rvert + \min (\lvert \lcp(\generateur{u}, r)\rvert, \lvert p \rvert)$ $\beta$-mates. 
Let \dsuv\ be the first $\epsilon$-mate of \dsuu, thus:
 \[ d(\dsuu, \dsuv) \leq \lfloor \frac{e_1-e_2}{2}\rfloor \longueur{\generateur{u}} + \longueur{p}\] 
 if $tr(\dsuu, w) > \lfloor \frac{e_1-e_2}{2}\rfloor$ and if $tr(\dsuu, w) \leq \lfloor \frac{e_1-e_2}{2}\rfloor$:
 \[ d(\dsuu, \dsuv) \leq tr(\dsuu, w) \longueur{\generateur{u}} + \min (\longueur{\lcp(\generateur{u}, r)}, \longueur{p}).\] 
 In order not to be repeated, $v$ has to start at $s(v) \geq \longueur{Uu} + 1 + \lvert p \rvert - 3 \lvert \generateur{u} \rvert$ since $\longueur{v} \leq \longueur{\generateur{u}}$ and $v^2$ has to end after $\longueur{Uu} + 1 + \lvert p \rvert - \lvert \generateur{u} \rvert $, hence $v$ must end at $e(v) \geq \frac{\longueur{Uu} + 1 + \lvert p \rvert - \lvert \generateur{u} \rvert + s(v)}{2}$, which is increasing with respect to $s(v)$. Therefore \begin{align*}T(\dsuu, \dsuv) &\geq (e_1+e_2-2)\lvert \generateur{u} \rvert + \lvert \prefix{u} \rvert + \lvert p \rvert \\ &\geq (e_1-e_2 +2e_2 -2)\lvert \generateur{u} \rvert + \lvert \prefix{u} \rvert + \lvert p \rvert .\end{align*} Set $q(\dsuu, \dsuv) = d(\dsuu, \dsuv) - \frac{1}{2}T(\dsuu, \dsuv)$.
%\[ d(\dsuu, \dsuv) - \frac{1}{2}T(\dsuu, \dsuv) \leq  \lfloor \frac{e_1-e_2}{2} \rfloor \longueur{\generateur{u}} + \min (\longueur{\lcp(\generateur{u}, r)}, \longueur{p}) - \frac{(e_1+e_2-2)\lvert \generateur{u} \rvert + \lvert \prefix{u} \rvert + \lvert p \rvert}{2}\]
%\[ q(\dsuu, \dsuv) \leq (\lfloor \frac{e_1-e_2}{2} \rfloor - \frac{e_1-e_2}{2} - e_2+1)\longueur{\generateur{u}} +  \min (\longueur{\lcp(\generateur{u}, r)}, \longueur{p}) -  \frac{\lvert \prefix{u} \rvert + \lvert p \rvert}{2}. \]
\begin{itemize}
\item If $e_2 \geq 2$, then $d(\dsuu, \dsuv) - \frac{1}{2}T(\dsuu, \dsuv) \leq 0$. \\

\item If $e_2 = 1$, and $tr(\dsuu, w) < e_1-e_2-\lfloor \frac{e_1-e_2}{2} \rfloor $, either $e_1 \equiv 0 \pmod 2$ hence $\lfloor \frac{e_1-e_2}{2} \rfloor - \frac{e_1-e_2}{2} = -\frac{1}{2}$ and one needs to notice that $\longueur{p} < \longueur{\generateur{u}}$ to conclude that $q(\dsuu, \dsuv)$ is negative; or $e_1 \equiv 1 \pmod 2$ hence $tr(\dsuu, w) \leq (\lfloor \frac{e_1-e_2}{2} \rfloor -1)$, $d(\dsuu, \dsuv) \leq (\lfloor \frac{e_1-e_2}{2} \rfloor -1)\longueur{\generateur{u}} + \min (\longueur{\lcp(\generateur{u}, r)}, \longueur{p})$ and $q(\dsuu, \dsuv)$ is negative.

\item If $e_2 = 1$, and $tr(\dsuu, w)) = e_1-e_2-\lfloor \frac{e_1-e_2}{2} \rfloor $, either $e_1 \equiv 1 \pmod 2$ and, by property \ref{betas2}, $\longueur{\lcp (\generateur{u}, r)} \leq \longueur{\prefix{u}}$, that is $\longueur{\prefix{u}} + \longueur{p} \geq 2 \longueur{ \min (\lvert \lcp(\generateur{u}, r)\rvert, \lvert p \rvert)}$ and $d(\dsuu, \dsuv) - \frac{1}{2}T(\dsuu, \dsuv) \leq 0$; or $e_1 \equiv 0 \pmod 2$ hence $\lfloor \frac{e_1-e_2}{2}\rfloor = \frac{e_1-e_2-1}{2}$, and $d(\dsuu, \dsuv) \leq \frac{e_1-e_2-1}{2}\rfloor \longueur{\generateur{u}} + \longueur{p}$ and since $\longueur{p} < \longueur{\generateur{u}}$, $q(\dsuu, \dsuv)$ is negative.

%\item
%\item If $e_2 = 1$, and \dsuu\ has $tr(\dsuu, w)(\longueur{s}+\longueur{p})+ \lvert s \rvert + \min (\lvert \lcp(\generateur{u}, r)\rvert, \lvert p \rvert) < \lfloor \frac{e_1-1}{2} \rfloor (\longueur{s}+\longueur{p})+ \lvert s \rvert + \min (\lvert \lcp(\generateur{u}, r)\rvert, \lvert p \rvert)$ $\beta$-mates, then $\lfloor \frac{e_1-1}{2} \rfloor > tr(\dsuu, w) \geq \lfloor \frac{e_1-1}{2} \rfloor -1$ and $d(\dsuu, \dsuv) - \frac{1}{2}T(\dsuu, \dsuv) \leq 0$.
\end{itemize}

\end{proof}

\begin{lemma}\label{epsilonetasonly}[$\alpha$, $\beta$, $\epsilon$ and $\eta$ only]
Let $w$ be a word starting with a FS double square \dsuu\ that has only $\alpha$, $\beta$, $\epsilon$ and $\eta$-mates in its family. If all the squares of $w$ are \dsuu\ and its family, then $\delta (w) \leq \frac{1}{2} \lvert w \rvert - \frac{1}{2} \lvert u \rvert$.
\end{lemma}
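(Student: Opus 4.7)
The plan is to combine the individual mate-count bounds from earlier properties in the same bookkeeping style as Lemma \ref{alphaetaepsilonlemma} and Lemma \ref{betaonly}, and then compare against the lower bound on $\longueur{w}$ forced by the presence of an $\eta$-mate.

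First I would bound the size of the family. Since the family consists only of $\alpha, \beta, \epsilon$ and $\eta$-mates (with at least $\beta$ and $\eta$ present by hypothesis and by Property \ref{gammaeta} / \ref{gammas2} ruling out $\gamma$), I would apply: Property \ref{alphas} to get at most $\longueur{p}$ $\alpha$-mates; Property \ref{betas} to get at most $\lfloor \frac{e_1-e_2}{2}\rfloor (\longueur{p}+\longueur{s}+1)$ $\beta$-mates; Property \ref{etassimple} to get at most $\longueur{p}+\longueur{s}-\longueur{\generateur{u}}+\longueur{\prefix{u}}$ $\eta$-mates; and, if an $\epsilon$-mate is present, Property \ref{epsilonetabis} to get at most $2\longueur{\generateur{u}}-\longueur{s}-\longueur{p}$ $\epsilon$-mates (this is the part whose existence was the whole point of the $\epsilon$/$\eta$ amortization). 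The $\alpha+\epsilon+\eta$ contribution collapses to $\longueur{\generateur{u}}+\longueur{p}+\longueur{\prefix{u}}$, which by Lemma \ref{dft} is at most $2\longueur{\generateur{u}}+\longueur{\prefix{u}}$. Using $\longueur{p}+\longueur{s}+1 \leq \longueur{\generateur{u}}-1$ (again by Lemma \ref{dft}) the $\beta$-contribution is at most $\lfloor\frac{e_1-e_2}{2}\rfloor \longueur{\generateur{u}}$. So the whole family has at most $(\lfloor \frac{e_1-e_2}{2}\rfloor +2)\longueur{\generateur{u}}+\longueur{\prefix{u}}$ elements.

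Next I would produce a lower bound on $\longueur{w}$. Because \dsuu\ has an $\eta$-mate, Property \ref{etas2} gives the factorization
\[ w = \generateur{u}^{e_1}\prefix{u}\generateur{u}^{e_1+e_2}\prefix{u}\generateur{u}^{2n-1}\prefix{u}r' \]
for some integer $n \geq 2$, and Corollary \ref{etas5} gives $e_1 \geq 3$. Hence $\longueur{w} \geq (2e_1+e_2+2n-1)\longueur{\generateur{u}} + 3\longueur{\prefix{u}}$, so
\[ \tfrac{1}{2}\longueur{w} - \tfrac{1}{2}\longueur{u} \geq \tfrac{e_1+e_2+2n-1}{2}\longueur{\generateur{u}} + \longueur{\prefix{u}}. \]

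Finally I would compare the two bounds: the desired inequality reduces to
\[ \tfrac{e_1+e_2+2n-1}{2} - \lfloor \tfrac{e_1-e_2}{2}\rfloor \geq 2, \]
and the left-hand side is at least $e_2+n-\tfrac{1}{2}\geq \tfrac{5}{2}>2$ using $e_2 \geq 1$ and $n \geq 2$. No step looks genuinely hard; the only thing to be careful about is the case split according to whether an $\epsilon$-mate is actually present (if not, the $\epsilon$-count is simply $0$, which is smaller than the bound from Property \ref{epsilonetabis}, so the inequality only becomes easier) and making sure the simplifications of $\longueur{p}+\longueur{s}$ via Lemma \ref{dft} are used exactly where needed so that the $\lfloor\cdot\rfloor$ in the $\beta$ count does not eat the slack coming from $n\geq 2$.
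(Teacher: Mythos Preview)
Your proposal is correct and follows essentially the same approach as the paper: you invoke Properties \ref{alphas}, \ref{betas}, \ref{etassimple}, and \ref{epsilonetabis} to bound the family size, then use the prefix from Property \ref{etas2} to lower-bound $\longueur{w}$, and compare. The paper keeps the slightly sharper family bound $(\lfloor\tfrac{e_1-e_2}{2}\rfloor+2)\longueur{\generateur{u}}+\longueur{\prefix{u}}-\longueur{s}$ (retaining the $-\longueur{s}$ term rather than absorbing it via Lemma \ref{dft}), but your looser bound already suffices for the comparison; your invocation of Corollary \ref{etas5} is superfluous here since the final inequality only needs $e_2\geq 1$ and $n\geq 2$.
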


\begin{proof}
By property \ref{alphas}, \dsuu\ has at most $\lvert p \rvert$ $\alpha$-mates.\\
By property \ref{betas}, \dsuu\ has at most $\lfloor \frac{e_1-e_2}{2}\rfloor(\lvert p \rvert + \lvert s \rvert+1)$ $\beta$-mates.\\
By property \ref{epsilonetabis}, \dsuu\ has at most $2\longueur{\generateur{u}}-\longueur{p}-\longueur{s}$ $\epsilon$-mates.\\
By property \ref{etassimple}, \dsuu\ has at most $\longueur{p}+\longueur{s}-\longueur{\generateur{u}}+\longueur{\prefix{u}}$ $\eta$-mates.\\
Summing up all of the above, we get that the size of the \dsuu-family is bounded by $(\lfloor \frac{e_1-e_2}{2}\rfloor+1)\longueur{\generateur{u}}+\longueur{\prefix{u}}+\longueur{p} \leq (\lfloor \frac{e_1-e_2}{2}\rfloor+2)\longueur{\generateur{u}}+\longueur{\prefix{u}}-\longueur{s}$.\\
Since \dsuu\ has an $\eta$-mate, by property \ref{etas2}, $w$ has the prefix $\generateur{u}^{e_1}\prefix{u}\generateur{u}^{e_1+e_2}\prefix{u}\generateur{u}^{2n-1}\prefix{u}$, with $n\geq2$, $u=\generateur{u}^{e_1}\prefix{u}$, thus $\frac{1}{2} \lvert w \rvert - \frac{1}{2} \lvert u \rvert \geq (\frac{e_1-e_2-1}{2}+e_2+n)\longueur{\generateur{u}}+\longueur{\prefix{u}}$.
\end{proof}

\begin{lemma}\label{epsilonetaothers}[$\alpha$, $\beta$, $\epsilon$ and $\eta$ and a non \dsuu-family]
Let $w$ be a word starting with a FS double square \dsuu\ that has only $\alpha$, $\beta$, $\eta$ and $\epsilon$-mates in its family. If there are FS double squares in $w$ other than \dsuu\ and its family, then there exists a FS double square of $w$, \dsuv , that verifies $d(\dsuu, \dsuv) - \frac{1}{2}T(\dsuu, \dsuv) \leq 0$.
\end{lemma}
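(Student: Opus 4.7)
The plan is to follow the template established by Lemma \ref{alphaetaepsilonlemmaothers}. First I would let \dsuv\ be the leftmost FS double square of $w$ that does not belong to the \dsuu-family. By the exhaustivity Lemma \ref{exhaust} and the hypothesis that the family contains the $\alpha, \beta, \epsilon$ and $\eta$-mates, \dsuv\ must be a $\gamma$, $\delta$, or $\zeta$-mate. Since \dsuu\ has both $\epsilon$- and $\eta$-mates, properties \ref{gammas2} and \ref{gammaeta} rule out $\gamma$-mates entirely, so \dsuv\ is a $\delta$- or $\zeta$-mate, and I would treat the two cases separately.

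For the upper bound on $d(\dsuu, \dsuv)$, I would reuse the family-size computation already performed in Lemma \ref{epsilonetasonly}: summing the bounds on $\alpha$, $\beta$, $\epsilon$ and $\eta$-mates yields
\[ d(\dsuu, \dsuv) \leq \left(\left\lfloor \frac{e_1-e_2}{2}\right\rfloor + 1\right)\longueur{\generateur{u}} + \longueur{\prefix{u}} + \longueur{p}. \]
Doubling and using $\longueur{p} < \longueur{\generateur{u}}$ (from Lemma \ref{dft}) gives $2\, d(\dsuu, \dsuv) < (e_1 - e_2 + 2)\longueur{\generateur{u}} + 2\longueur{\prefix{u}} + 2\longueur{p}$. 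For the lower bound on $T(\dsuu, \dsuv)$ I plan to invoke property \ref{deltaeta} in the $\delta$ case, giving $T \geq (e_1 + e_2 + 2)\longueur{\generateur{u}} + 2\longueur{\prefix{u}}$, and property \ref{zetaeta} in the $\zeta$ case, giving $T \geq (e_1 + e_2 + n - 1)\longueur{\generateur{u}} + 2\longueur{\prefix{u}}$ with $n \geq 2$ supplied by property \ref{etas2} since \dsuu\ has an $\eta$-mate.

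Subtracting then shows, in the $\delta$ case, $T - 2d > 2(e_2 - 1)\longueur{\generateur{u}} \geq 0$, so the desired inequality $d(\dsuu, \dsuv) - \tfrac12 T(\dsuu, \dsuv) \leq 0$ is immediate. In the $\zeta$ case, the analogous computation yields $T - 2d \geq (2e_2 + n - 3)\longueur{\generateur{u}} - 2\longueur{p}$, which is already non-negative as soon as $e_2 \geq 2$ or $n \geq 3$.

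The main obstacle is therefore the narrow corner configuration $e_2 = 1$ and $n = 2$ in the $\zeta$-mate case. To dispose of it I plan to call on Corollary \ref{etas5}, which forces $e_1 > 2$ whenever an $\eta$-mate is present, and then to tighten the count of $\beta$-mates exactly as in Lemma \ref{epsilonsall}: the explicit prefix $\generateur{u}^{e_1}\prefix{u}\generateur{u}^{e_1+e_2}\prefix{u}\generateur{u}^{2n-1}\prefix{u}$ of $w$ given by property \ref{etas2} forces $tr(\dsuu, w) \geq 2n - 1 - e_2$, which constrains the trace-dependent $\beta$-mate bound enough to absorb the missing $\longueur{\generateur{u}}$ and recover $T(\dsuu, \dsuv) \geq 2\, d(\dsuu, \dsuv)$.
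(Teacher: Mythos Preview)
Your overall strategy (pick the leftmost non-family double square, rule out $\gamma$ via \ref{gammas2}/\ref{gammaeta}, then split into $\delta$ and $\zeta$ using \ref{deltaeta} and \ref{zetaeta}) matches the paper exactly. The difference is in how you bound the family size. You reuse the bound $(\lfloor \tfrac{e_1-e_2}{2}\rfloor + 1)\longueur{\generateur{u}} + \longueur{\prefix{u}} + \longueur{p}$ from Lemma \ref{epsilonetasonly} and then propose to patch the resulting $(e_2=1,\ n=2)$ corner by redoing a trace-dependent $\beta$-mate count ``as in Lemma \ref{epsilonsall}''. The paper instead invokes Property \ref{betaetatous} directly, which already performs that trace analysis and yields the sharper bound $(\tfrac{e_1-e_2-1}{2}+2)\longueur{\generateur{u}} + \longueur{\prefix{u}} - \longueur{s}$; with this bound the $\zeta$ comparison gives $T-2d \geq (2e_2+n-4)\longueur{\generateur{u}} + \longueur{s} \geq 0$ with no residual case.

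Two concrete issues in your write-up. First, you quote Property \ref{zetaeta} as $T \geq (e_1+e_2+n-1)\longueur{\generateur{u}} + 2\longueur{\prefix{u}}$, but the actual statement carries an extra $-\longueur{s}$; with that correction your $\zeta$ deficit becomes $(2e_2+n-3)\longueur{\generateur{u}} - 2\longueur{p} - \longueur{s}$, which is genuinely negative in the corner case and makes the patch mandatory rather than optional. Second, your patch is only a sketch: ``constrains the trace-dependent $\beta$-mate bound enough to absorb the missing $\longueur{\generateur{u}}$'' is precisely the content of Property \ref{betaetatous}, so you would end up reproving that property inline. It is cleaner, and what the paper does, to cite \ref{betaetatous} for $d(\dsuu,\dsuv)$ from the start.
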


\begin{proof}
%\textcolor{red}{rewrite}
By lemma \ref{betaetatous}, the size of the \dsuu-family is bounded by $(\frac{e_1-e_2-1}{2}+2)\longueur{\generateur{u}}+\longueur{\prefix{u}}-\longueur{s}$.\\
Let \dsuv\ be the first (leftmost) FS double square that is not in the \dsuu-family, \dsuv\ is either a $\delta$ or a $\zeta$-mate of \dsuu. 
\begin{itemize}
\item If \dsuv\ is a $\delta$-mate of \dsuu, then by property \ref{deltaeta}, $T(\dsuu, \dsuv) \geq (e_1+e_2+2)\longueur{\generateur{u}} + 2 \longueur{\prefix{u}}$.
\item If \dsuv\ is a $\zeta$-mate of \dsuu, then by property \ref{zetaeta}, $T(\dsuu, \dsuw)\geq (e_1+e_2+n-1)\longueur{\generateur{u}}+2\longueur{\prefix{u}}-\longueur{s}$.
\end{itemize}
\end{proof}

%%%%%%%%%%%%%%%%%%%%%%%%%%%%%%%%%       PROPERTIES      %%%%%%%%%%%%%%%%%%%%%%%%%%%%%%%%%%%

\section{Properties}

In everything that follows, $x$ is a word starting with a FS double square $\dsuu = \UU, \generateur{u}=\prefix{u}\suffix{u}$, and we write $x = UU\generateur{u}^{tr(\dsuu, x)}r$ for a word $r$. Recall that $\lcp(\prefix{u}\suffix{u}, \suffix{u}\prefix{u})=p$, $\lcs(\prefix{u}\suffix{u}, \suffix{u}\prefix{u})=s$.

\begin{property}\label{trace}
Let $x$ be a word starting with a FS double square \dsuu, then $tr(\dsuu, x) \leq e_1-e_2$, and if there is equality, $r$ doesn't have $\prefix{u}$ has a prefix.
\end{property}

\begin{proof}
If $tr(\dsuu, x) > e_1-e_2$, $x$ would have $Uu\generateur{u}^{e_1+1}$ as a prefix, and $u^2$ would have a further occurrence in $x$ (at position $\longueur{U}+1$). In case of equality, $x$ would have $Uu\generateur{u}^{e_1}\prefix{u}$ as a prefix and the same argument applies.
\end{proof}

\begin{property}\label{alphas}
Let $x$ be a word starting with a FS double square \dsuu. Then \dsuu\ has at most $\lvert \lcp(\prefix{u}\suffix{u}, \suffix{u}\prefix{u}) \rvert = \lvert p \rvert$ $\alpha$-mates.
\end{property}

\begin{proof}
A conjugate of a square is a square, and so writing $UU\prefix{u}\suffix{u}$ will lead to $\lvert \prefix{u}\suffix{u} \rvert+1$ squares of length $\lvert U \rvert$. However, we are interested in shifts of \dsuu, not in shifts of $U$ only, thus we also need to study the possible shifts of $u$. Note that: \[UU=(\prefix{u}\suffix{u})^{e_1}\prefix{u}(\prefix{u}\suffix{u})^{e_2}(\prefix{u}\suffix{u})^{e_1}\prefix{u}(\prefix{u}\suffix{u})^{e_2}\] \[ \\ UU=(\prefix{u}\suffix{u})^{e_1}\prefix{u}(\prefix{u}\suffix{u})^{e_1}\prefix{u}(\suffix{u}\prefix{u})^{e_2}(\prefix{u}\suffix{u})^{e_2}\]\[ UU=uu(\suffix{u}\prefix{u})^{e_2}(\prefix{u}\suffix{u})^{e_2}.\]
The square $uu$ is followed by a factor $\suffix{u}\prefix{u}$. Since $uu$ has $\prefix{u}\suffix{u}$ as a prefix, the number of conjugates of $uu$ in $uu\suffix{u}\prefix{u}$ is equal to $\longueur{\lcp (\prefix{u}\suffix{u}, \suffix{u}\prefix{u})}$. It follows that the number of double square conjugates of \dsuu\ in $UU\prefix{u}\suffix{u}$ is equal to $\lvert \lcp (\prefix{u}\suffix{u}, \suffix{u}\prefix{u}) \rvert = \lvert p \rvert$.\\
The core of the interrupt insures that no other shifts of \dsuu\ are FS double squares: the small repetition of any $\alpha$-mate has to end in the core of the interrupt of \dsuu. If a FS double square \dsuv\ had its first small repetition, $v_{[1]}$, ending after the core of the interrupt, then $v_{[1]}$ would contain $w_1$, the word of length $\longueur{\generateur{u}}$ that ends with the core of the interrupt, while $v_{[3]}$ wouldn't, contradicting property \ref{ifc}.
\end{proof}

\begin{figure}[H]
\begin{align*}
\dsuu = (aa&ab) aa (aaab) . (aaab) aa (aaab) \\
\dsuv = (&aaba) aa (aaba) . (aaba) aa (aaba) \\
x = \rlap{$\overbrace{\phantom{aaabaaaaabaaabaaaaab}}^{\dsuu}$} a& \underbrace{aabaaaaabaaabaaaaaba}_{\dsuv}
\end{align*}
  \caption{\dsuv\ is an $\alpha$-mate of \dsuu.}
\end{figure}

\begin{property}\label{betas}
Let $x$ be a word starting with a FS double square \dsuu. The number of $\beta$-mates of \dsuu\ is bounded by:
\[ n = \lfloor \frac{e_1-e_2}{2}\rfloor(\lvert p \rvert + \lvert s \rvert +1) \]
if $tr(\dsuu, x) > \lfloor \frac{e_1-e_2}{2}\rfloor$ and, if $tr(\dsuu, x) \leq \lfloor \frac{e_1-e_2}{2}\rfloor$, by:
\[ n = ((tr(\dsuu, x)-1)(\lvert p \rvert + \lvert s \rvert +1) + \min (\lvert \lcp(\generateur{u}, r)\rvert, \lvert p \rvert) + \lvert s \rvert +1)^+ \]
where $e_1$ and $e_2$ are the first and second exponent, respectively, in the canonical factorization of \dsuu, and $(y)^+$ is the positive part of $y$.
\end{property}

\begin{proof}
Let $x$ be a word starting with a FS double square \dsuu.
Write $x = \generateur{u}^{e_1}\prefix{u}\generateur{u}^{e_2}\generateur{u}^{e_1}\prefix{u}\generateur{u}^{e_2}\generateur{u}^{tr(\dsuu, w)}r$ . For each integer value of $i \leq min(tr(\dsuu, w), \lfloor \frac{e_1-e_2}{2}\rfloor)$, there is a factor $\dsuv_i = \generateur{u}^{e_1-i}\prefix{u}\generateur{u}^{e_2+i}\generateur{u}^{e_1-i}\prefix{u}\generateur{u}^{e_2+i}$ starting at position $\longueur{\generateur{u}^i}+1$. We refer to those factors as shrinks of \dsuu\ (or canonical $\beta$-mates). By definition, the $\beta$-mates of \dsuu\ are the shrinks of \dsuu\ that are FS double squares and their cyclic shifts. \\
First, let's bound the number of shrinks of \dsuu\ that are FS double squares. Because those shrinks are FS double squares, they have to have their first exponent larger or equal to their second exponent. Because their exponents are $e_1-t$ and $e_2+t$, and because the first exponent of a FS double square is larger or equal to its second exponent, $t \leq\lfloor \frac{e_1-e_2}{2}\rfloor$, and there are at most $\lfloor \frac{e_1-e_2}{2}\rfloor$ of them.\\
As a consequence of property \ref{alphas}, for each integer value of $i$, $1 \leq i < \min (tr(\dsuu, x), \lfloor \frac{e_1-e_2}{2}\rfloor)$, the FS double square $\dsuv_i = \generateur{u}^{e_1-i}\prefix{u}\generateur{u}^{e_2+i}\generateur{u}^{e_1-i}\prefix{u}\generateur{u}^{e_2+i}$ has $\longueur{s}$ cyclic shifts on the left (cyclic shifts by a  strictly negative number of positions) and $\longueur{p}$ cyclic shifts on the right (cyclic shifts by a positive number of positions). If $tr(\dsuu, x) > \lfloor \frac{e_1-e_2}{2}\rfloor$, for $i = \frac{e_1-e_2}{2}$, the FS double square $\dsuv_i = \generateur{u}^{e_1-i}\prefix{u}\generateur{u}^{e_2+i}\generateur{u}^{e_1-i}\prefix{u}\generateur{u}^{e_2+i}$ has $\longueur{s}$ cyclic shifts on the left, and $\longueur{p}$ cyclic shifts on the right. If $tr(\dsuu, x) \leq \lfloor \frac{e_1-e_2}{2}\rfloor$, for $i = tr(\dsuu, x)$, the FS double square $\dsuv_i = \generateur{u}^{e_1-i}\prefix{u}\generateur{u}^{e_2+i}\generateur{u}^{e_1-i}\prefix{u}\generateur{u}^{e_2+i}$ has $\longueur{s}$ cyclic shifts on the left, but only $\min (\lvert \lcp(\generateur{u}, r)\rvert, \lvert p \rvert)$ cyclic shifts on the right. \\

As for the $\alpha$-mates, the core of the interrupt insures that no other conjugates of shrinks of \dsuu\ are FS double squares. If a $\beta$-mate of \dsuu\ were to end elsewhere than in the core of the interrupt of \dsuu , either $end(v_{[1]} \leq N_1$ or $end(v_{[1]} > N_1(\dsuu)+ \longueur{sp} +2$. If $end(v_{[1]} \leq N_1$, $v_{[2]}$ would contain $w_1$ (the factor of length $\longueur{\generateur{u}}$ that ends with the core of the interrupt) and $v_{[1]}$ wouldn't. If $end(v_{[1]} > N_1(\dsuu)+ \longueur{sp} +2$, $v_{[1]}$ would contain $w_2$ (the factor of length $\longueur{\generateur{u}}$ that starts with the core of the interrupt) at a certain position $i$ and $v_{[2]}$ would not. 
\end{proof}
   
\begin{figure}[H]
\begin{align*}
\dsuu = (aab) &(aab) (aab) a (aab) . (aab) (aab) (aab) a (aab) \\
\dsuv = &(aab) (aab) a (aab) (aab) . (aab) (aab) a (aab) (aab) \\ \text{and }
w = \rlap{$\overbrace{\phantom{aabaabaabaaabaabaabaabaaab}}^{\dsuu}$} aab & \underbrace{aabaabaaabaabaabaabaaabaab}_{\dsuv}
\end{align*}
  \caption{\dsuv\ is a $\beta$-mate of \dsuu.}
\end{figure}

\begin{property}\label{betas2}
Let $x$ be a word starting with a FS double square \dsuu. If \dsuu\ has $\lfloor \frac{e_1-e_2}{2} \rfloor (\longueur{p}+\longueur{s}+1)$ $\beta$-mates, then $tr(\dsuu, x)) \leq e_1-e_2-\lfloor \frac{e_1-e_2}{2} \rfloor$. Furthermore, if $tr(\dsuu, x)) = e_1-e_2-\lfloor \frac{e_1-e_2}{2} \rfloor $ then $\longueur{\lcp (\generateur{u}, r)} < \longueur{\prefix{u}}$. The same goes if \dsuu\ has $(\lfloor \frac{e_1-e_2}{2} \rfloor - 1)(\longueur{p}+\longueur{s}+1) + (\min (\lvert \lcp(\generateur{u}, r)\rvert, \lvert p \rvert) + \longueur{s}+1)$ $\beta$-mates.
\end{property}

\begin{proof}
%\textcolor{red}{a verifier}By property \ref{betas}, $\lfloor \frac{e_1-e_2}{2} \rfloor \geq \lfloor \frac{e_1-1}{2} \rfloor$, hence $e_2 = 1$ or $e_2=2$, and $tr(\dsuu, w)) \geq \lfloor \frac{e_1-1}{2} \rfloor$. 
Let \dsuv\ be the $\beta$-mate of \dsuu\ starting at position $\lvert \generateur{u}^{\lfloor \frac{e_1-e_2}{2}\rfloor}\rvert+1$.\\ %, with factorization:
%\[\dsuv=\generateur{u}^{\lfloor \frac{e_1+e_2}{2} \rfloor}\prefix{u}\generateur{u}^{\lfloor \frac{e_1+e_2}{2} \rfloor}\generateur{u}^{\lfloor \frac{e_1+e_2}{2} \rfloor}\prefix{u}\generateur{u}^{\lfloor \frac{e_1+e_2}{2} \rfloor}.\] \\
 If $e_1+e_2 \equiv 0 \pmod 2$:
 \[ x=\generateur{u}^{e_1}\prefix{u}\generateur{u}^{e_2}\generateur{u}^{e_1}\prefix{u}\generateur{u}^{e_2}\generateur{u}^{tr(\dsuu, x)}r \]
 \[ x=\generateur{u}^{\lfloor \frac{e_1-e_2}{2} \rfloor}\generateur{u}^{\lfloor \frac{e_1+e_2}{2} \rfloor}\prefix{u}\generateur{u}^{e_2+\lfloor \frac{e_1-e_2}{2} \rfloor}\generateur{u}^{\lfloor \frac{e_1+e_2}{2} \rfloor}\prefix{u}\generateur{u}\generateur{u}^{e_2+\lfloor \frac{e_1-e_2}{2}\rfloor}\generateur{u}^{tr(\dsuu, x)-\lfloor \frac{e_1-e_2}{2}\rfloor}r,\]
 and \dsuv\ has the factorization: 
 \[\dsuv=\generateur{u}^{\lfloor \frac{e_1+e_2}{2} \rfloor}\prefix{u}\generateur{u}^{\lfloor \frac{e_1+e_2}{2} \rfloor}\generateur{u}^{\lfloor \frac{e_1+e_2}{2} \rfloor}\prefix{u}\generateur{u}^{\lfloor \frac{e_1+e_2}{2} \rfloor}.\]
 %=\generateur{u}^{\lfloor \frac{e_1+1}{2} \rfloor}\prefix{u}\generateur{u}\generateur{u}^{\lfloor \frac{e_1-1}{2} \rfloor}\generateur{u}^{\lfloor \frac{e_1+1}{2} \rfloor}\prefix{u}\generateur{u}\generateur{u}^{\lfloor \frac{e_1-1}{2}\rfloor} 
  If $tr(\dsuu, x)) > \lfloor \frac{e_1-e_2}{2} \rfloor$,
 then the small repetition of \dsuv\ has another occurence at position $\longueur{U}+\lvert \generateur{u}^{\lfloor \frac{e_1+e_2}{2}\rfloor}\rvert+1$, hence $tr(\dsuu, x)) = \lfloor \frac{e_1-1}{2} \rfloor $. By the same argument, $\longueur{\lcp (\generateur{u}, r)} < \longueur{\prefix{u}}$. \\
 If $e_1+e_2 \equiv 1 \pmod 2$: 
 \[ x=\generateur{u}^{e_1}\prefix{u}\generateur{u}^{e_2}\generateur{u}^{e_1}\prefix{u}\generateur{u}^{e_2}\generateur{u}^{tr(\dsuu, x)}r \]
 \[ x=\generateur{u}^{\lfloor \frac{e_1-e_2}{2} \rfloor}\generateur{u}^{\lfloor \frac{e_1+e_2}{2} \rfloor+1}\prefix{u}\generateur{u}^{e_2+\lfloor \frac{e_1-e_2}{2} \rfloor}\generateur{u}^{\lfloor \frac{e_1+e_2}{2} \rfloor+1}\prefix{u}\generateur{u}\generateur{u}^{e_2+\lfloor \frac{e_1-e_2}{2}\rfloor}\generateur{u}^{tr(\dsuu, x)-\lfloor \frac{e_1-e_2}{2}\rfloor}r,\]
and \dsuv\ has the factorization: 
 \[\dsuv=\generateur{u}^{\lfloor \frac{e_1+e_2}{2} \rfloor + 1}\prefix{u}\generateur{u}^{\lfloor \frac{e_1+e_2}{2} \rfloor}\generateur{u}^{\lfloor \frac{e_1+e_2}{2} \rfloor + 1}\prefix{u}\generateur{u}^{\lfloor \frac{e_1+e_2}{2} \rfloor}.\]
If $tr(\dsuu, x) > \lfloor \frac{e_1-e_2}{2} \rfloor + 1 = e_1-e_2-\lfloor \frac{e_1-e_2}{2} \rfloor$, the small repetition of \dsuv\ has another occurrence starting at position  $\longueur{U}+\lvert \generateur{u}^{\lfloor \frac{e_1+e_2}{2}\rfloor}\rvert+1$, hence $\lfloor \frac{e_1-e_2}{2} \rfloor \leq tr(\dsuu, x) \leq e_1-e_2-\lfloor \frac{e_1-e_2}{2} \rfloor$. By the same argument, if $tr(\dsuu, x) = e_1-e_2-\lfloor \frac{e_1-e_2}{2} \rfloor$, then $\longueur{\lcp (\generateur{u}, r)} < \longueur{\prefix{u}}$.\end{proof}

\begin{property}\label{gamas}
Let $x$ be a word starting with a FS double square \dsuu\ and let \dsuv\ be a $\gamma$-mate of \dsuu. Then, in the canonical factorization of \dsuv, both exponents are equal to $1$,  $\generateur{v} = \generateur{\conj{u}}^{e_1-t}\prefix{\conj{u}}\generateur{\conj{u}}^{e_2+t-k}$ and $\prefix{v}=\generateur{\conj{u}}^k$ for $\generateur{\conj{u}}$ a conjugate of $\generateur{u}$, $\prefix{\conj{u}}$ its prefix of length $\longueur{\prefix{u}}$ and the integers $k$ ($k$ is fixed), $t, e_1-t > k > e_2$ and $t>k-e_2$. It follows that $\lcp(\generateur{v}, \generateur{\conj{v}}) = \generateur{\conj{u}}^{e_1-t-k}\prefix{\conj{u}}\lcp(\prefix{u}\suffix{u}, \suffix{u}\prefix{u})$ and $\lcs(\generateur{v}, \generateur{\conj{v}}) = \lcs(\prefix{u}\suffix{u}, \suffix{u}\prefix{u})\generateur{u}^{e_2+t-k}$.
\end{property}

\begin{proof}
As for the $\beta$-mates discussed in property \ref{betas}, we will introduce canonical $\gamma$-mates. The canonical $\gamma$-mates are the ones that start at positions $t \lvert \generateur{u} \rvert + 1$, hence have $\generateur{u}$ as a prefix. Those canonical $\gamma$-mates and their cyclic shifts will form the $\gamma$-mates of \dsuu. We will then use those canonical $\gamma$-mates to assess the total number of $\gamma$-mates. \\
If $x$ has a double square $\mathcal{U}=(u, U)$ and some shifts of it as a prefix, then:
\[ x = \generateur{u}^{e_1}\prefix{u}\generateur{u}^{e_2}\generateur{u}^{e_1}\prefix{u}\generateur{u}^{e_2}\generateur{u}^{n}r, \]
for some a suffix $r$ of $x$, and an integer $n \leq e_1 - e_2$.
Now, if that double square has a canonical $\gamma$-mate $\mathcal{V}_{(t,k)} = (v_{(t,k)}, V_{(t,k)})$, starting at position $t \lvert \generateur{u} \rvert + 1$, then:
\[v_{(t,k)} = \generateur{u}^{e_1-t}\prefix{u}\generateur{u}^{e_2+t} = \underbrace{\generateur{u}^{e_1-t}\prefix{u}\generateur{u}^{e_2+t-k}}_{\generateur{{v_{(t,k)}}}}\underbrace{\generateur{u}^k}_{\prefix{{v_{(t,k)}}}}, \]
by synchronization principle, % \ref{dumdum}
 $\generateur{{v_{(t,k)}}} = \generateur{u}^{e_1-t}\prefix{u}\generateur{u}^{e_2+t-k}$ and $\prefix{{v_{(t,k)}}} = \generateur{u}^{k}$ 
for  $k \leq e_2 + t$, as $\longueur{v_{(t,k)}}= \longueur{u}$ (note that this is true for any canonical $\gamma$-mate, in particular for the last one). Now,
\[ V_{(t,k)} = \generateur{u}^{e_1-t}\prefix{u}\generateur{u}^{e_2+t-k} \generateur{u}^k \generateur{u}^{e_1-t}\prefix{u}\generateur{u}^{e_2+t-k}, \]
\[ V_{(t,k)}V_{(t,k)} = \generateur{u}^{e_1-t}\prefix{u}\generateur{u}^{e_2+t-k} \generateur{u}^k \generateur{u}^{e_1-t}\prefix{u}\generateur{u}^{e_2+t-k}\generateur{u}^{e_1-t}\prefix{u}\generateur{u}^{e_2+t-k} \generateur{u}^k \generateur{u}^{e_1-t}\prefix{u}\generateur{u}^{e_2+t-k}, \]
i.e., $e_1 - t > k > e_2$, and $t > k-e_2$. It follows that $x$ has the prefix:
\[ x'' = \generateur{u}^{e_1}\prefix{u}\generateur{u}^{e_2}\generateur{u}^{e_1}\prefix{u}\generateur{u}^{e_2}\generateur{u}^{e_1-k}\prefix{u}\generateur{u}^{e_2+t-k} \generateur{u}^k \generateur{u}^{e_1-t}\prefix{u}\generateur{u}^{e_2+t-k}, \]
\[ x'' = \generateur{u}^{e_1}\prefix{u}\generateur{u}^{e_2}\generateur{u}^{e_1}\prefix{u}\generateur{u}^{e_2}\generateur{u}^{e_1-k}\prefix{u}\generateur{u}^{e_1+e_2}\prefix{u}\generateur{u}^{e_2+t-k}. \]
Thus $k$ is fixed . Now: 
\[  \prefix{{v_{(t,k)}}}\suffix{{v_{(t,k)}}} = \generateur{u}^{e_1-t}\prefix{u}\generateur{u}^{e_2+t-k}, \]
\[ \suffix{{v_{(t,k)}}}\prefix{{v_{(t,k)}}} = \generateur{u}^{e_1-t-k}\prefix{u}\generateur{u}^{e_2+t}, \]
i.e., $lcp(v_{t,k_1}v_{t,k_2}, v_{t,k_2}v_{t,k_1} ) = \generateur{u}^{e_1-t-k}\prefix{u}p$ and $lcs(v_{t,k_1}v_{t,k_2}, v_{t,k_2}v_{t,k_1} ) = s\generateur{u}^{e_2+t-k}$
for $s = lcs(\prefix{u}\suffix{u}, \suffix{u}\prefix{u})$ and $p= lcp(\prefix{u}\suffix{u}, \suffix{u}\prefix{u})$.
\end{proof}

\begin{property}\label{gammas2}
Let $x$ be a word starting with  \dsuu, a FS double square. If \dsuu\ has a $\gamma$-mate \dsuv, then \dsuu\ has no $\epsilon$-mates.
\end{property}

\begin{proof}
Since \dsuv\ is a $\gamma$-mate of \dsuu, by property \ref{gamas}, $v = \tilde{\generateur{u}}^{e_1-n}\tilde{\prefix{u}}\tilde{\generateur{u}}^{e_2+n}$, for $\tilde{\generateur{u}}$ a conjugate of $\generateur{u}$ and $\tilde{\prefix{u}}$ the prefix of length $\lvert \prefix{u} \rvert$ of $\tilde{\generateur{u}}$ and the integer $n$. 
It follows that $x$ has the prefix:
\[ x' = \generateur{u}^{e_1}\prefix{u}\generateur{u}^{e_1+e_2}\prefix{u}\generateur{u}^{e_1+e_2-k}\prefix{u}\generateur{u}^{e_1+e_2}\prefix{u}\generateur{u}^{e_2+t-k}. \]
All the double squares that could have been $\epsilon$-mates of \dsuu\ are repeated further in $x$ (namely in the factor $\generateur{u}^{e_1+e_2}\prefix{u}\generateur{u}^{e_2+t-k}$ that starts at position $\longueur{\generateur{u}^{e_1}\prefix{u}\generateur{u}^{e_1+e_2}\prefix{u}\generateur{u}^{e_1+e_2-k}\prefix{u}}+1$).
\end{proof}

\begin{property}\label{deltas}
Let $x$ be a word starting with  \dsuu, a FS double square. If \dsuu\ has a $\delta$-mate \dsuv, then $T(\dsuu, \dsuv) \geq \lvert U \rvert$.
\end{property}

\begin{proof}
%\textcolor{red}{rewrite}
Because $x$ starts with a FS double square \dsuu, $x$ has $\generateur{u}^{e_1}\prefix{u}\generateur{u}^{e_2}\generateur{u}^{e_1}\prefix{u}\generateur{u}^{e_2}r$ as a prefix. A $\delta$-mate \dsuv\ starts before $e(u_{[1]})$ and has $\lvert v \rvert > \lvert U \rvert$.\\
If $s(v_{[1]})\geq N_1(\dsuu) = \lvert u \rvert - \lcs (\prefix{u}\suffix{u}, \suffix{u}\prefix{u})$ and $T(\dsuu, \dsuv) < \lvert U \rvert$, then $v_{[1]}$ has a prefix $\conj{\generateur{u}}$ for a certain conjugate $\conj{\generateur{u}}$ of $\generateur{u}$ and $v_{[3]}$ has another conjugate of $\generateur{u}$ as a prefix, contradicting the fact that $\generateur{u}$ is primitive.\\
If $s(v_{[1]})<N_1(\dsuu)$,  $v_{[1]}$ either contains both $w_1$, the factor of length $\longueur{\generateur{u}}$ that ends with the core of the interrupt, and $w_2$, the factor of length $\longueur{\generateur{u}}$ that starts with the core of the interrupt, or just $w_2$. \\
If $v_{[1]}$ were to contain both $w_1$ and $w_2$, at positions $i$ for $w_1$ and $j$ for $w_2$ and were to end before $\longueur{Uu} + 1$, $v_{[3]}$ would have the factor $w_1$ starting at position $i' =  i + \lvert U \lvert - \lvert v \rvert \neq i$ and the factor $w_2$ starting at position $j' = j + \lvert U \lvert - \lvert v \rvert \neq j$, a contradiction.\\ %Hence $v_{[1]}$ ends after $\longueur{Uu} + 1$, therefore $T(\dsuu, \dsuv) \geq \lvert U \rvert$.\\
If $w_1$ were to contain only $w_2$, at position $j$ and were to end before $\longueur{Uu} + 1$, $v_{[3]}$ would have the factor $w_2$ starting at position $j = i + \lvert U \lvert - \lvert v \rvert \neq i$, a contradiction. %Hence $v_{[1]}$ ends after $\longueur{Uu} + 1$, therefore $T(\dsuu, \dsuv) \geq \lvert U \rvert$.

%has the factor of length $\lvert \generateur{u} \rvert$ that starts with the first core of the interrupt of \dsuu\ (i.e. $w_2$) starting at position $i$ and if $v_{[1]}$ were to end before $\longueur{Uu} + 1$, $v_{[3]}$ would have the factor $w_2$ starting at position $j = i + \lvert U \lvert - \lvert v \rvert \neq i$, a contradiction. Hence $v_{[1]}$ ends after $\longueur{Uu} + 1$, therefore $T(\dsuu, \dsuv) \geq \lvert U \rvert$.
\end{proof}

\begin{lemma}\label{deltagama}
Let $x$ be a word starting with a FS double square \dsuu. If \dsuu\ has a $\gamma$-mate \dsuv\ and a $\delta$-mate \dsuw, then $d(\dsuu, \dsuw) - \frac{1}{2} T(\dsuu, \dsuw) \leq 0$.
\end{lemma}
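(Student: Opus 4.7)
The plan is to bound $d(\dsuu, \dsuw)$ from above by the size of the \dsuu-family and $T(\dsuu, \dsuw)$ from below by a strengthening of Property~\ref{deltas} that uses the extended prefix of $x$ forced by the $\gamma$-mate.

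First, I would apply Property~\ref{gamas} to $\dsuv$: if $t$ denotes the canonical position (in units of $\longueur{\generateur{u}}$) of the last $\gamma$-mate and $k$ is the integer of Property~\ref{gamas} (so $e_1 > t+k > t+e_2$, hence $t+1 \leq e_1-k$), then $x$ begins with the extended prefix $\generateur{u}^{e_1}\prefix{u}\generateur{u}^{e_1+e_2}\prefix{u}\generateur{u}^{e_1+e_2-k}\prefix{u}\generateur{u}^{e_1+e_2}\prefix{u}\generateur{u}^{e_2+t-k}$ exhibited in the proof of Property~\ref{gammas2}. Since the presence of a $\gamma$-mate precludes both $\epsilon$-mates (Property~\ref{gammas2}) and $\eta$-mates (Property~\ref{gammaeta}), the \dsuu-family consists of $\alpha$, $\beta$ and $\gamma$-mates only, and the same count used in the proof of Lemma~\ref{gammaonly} bounds it by $(t+1)\longueur{\generateur{u}}$. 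Choosing \dsuw to be the leftmost $\delta$-mate, and hence the leftmost FS double square not in the family, every FS double square strictly between \dsuu and \dsuw lies in that family, so $d(\dsuu, \dsuw) \leq (t+1)\longueur{\generateur{u}}$.

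The lower bound on $T(\dsuu, \dsuw)$ combines Property~\ref{deltas}, which alone gives $T(\dsuu,\dsuw) \geq \longueur{U}$, with the extended prefix. Applying the core-of-interrupt Property~\ref{ifc} to \dsuw itself, the two copies $w_{[1]}$ and $w_{[2]}$ each contain the core of the interrupt of \dsuw in matching relative positions; together with the periodicity of the $\generateur{u}^{e_1+e_2}$ blocks above and the primitivity of $\generateur{u}$ (Corollary~\ref{sp}), this forces $w_{[1]}$ to end past the third $\prefix{u}$-marker of the extended prefix, rather than merely past $\longueur{Uu}$. Translating into lengths, $T(\dsuu, \dsuw) \geq \longueur{U} + (e_1-k)\longueur{\generateur{u}}$, and using $e_1 - k \geq t+1$ this yields $T(\dsuu, \dsuw) \geq 2(t+1)\longueur{\generateur{u}} \geq 2d(\dsuu, \dsuw)$, from which the conclusion $d(\dsuu, \dsuw) - \frac{1}{2} T(\dsuu, \dsuw) \leq 0$ follows.

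The main obstacle is this strengthening of Property~\ref{deltas}. The core-of-interrupt argument that yields the bare bound $T \geq \longueur{U}$ has to be iterated across the new $\prefix{u}$-markers supplied by the $\gamma$-mate: one must rule out any placement of $w$ that would let $w_{[1]}$ end within the central $\generateur{u}^{e_1+e_2-k}$ block of the extended prefix, since any such placement either violates primitivity of $\generateur{u}$ via Corollary~\ref{sp} or produces an earlier occurrence of the long repetition $W=ww$, contradicting the definition of \dsuw as an FS double square.
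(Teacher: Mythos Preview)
Your bound $d(\dsuu,\dsuw)\le (t+1)\longueur{\generateur{u}}$ on the family size matches the paper exactly. The difference lies in the tail bound.

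The paper sidesteps your ``main obstacle'' entirely. Rather than strengthening Property~\ref{deltas} by a direct core-of-interrupt analysis on the extended prefix, it applies the exhaustivity Lemma~\ref{exhaust} to the $\gamma$-mate $\dsuv$ instead of to $\dsuu$. Since $\dsuw$ is not in the family while $\dsuv$ is, the family definition forces $s(\dsuw)>s(\dsuv)$; and since $\dsuw$ is $u$-close while $\longueur{v}=\longueur{U}>\longueur{u}$, $\dsuw$ is also $v$-close. Now $\longueur{w}>\longueur{U}=\longueur{v}$, so by the trichotomy of Lemma~\ref{exhaust} applied to $\dsuv$ one gets $\longueur{w}\ge\longueur{V}$ for free. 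Reading off $\longueur{V}$ from Property~\ref{gamas} gives $T(\dsuu,\dsuw)\ge (e_1+2e_2+t-k)\longueur{\generateur{u}}\ge 2(e_2+t)\longueur{\generateur{u}}\ge 2(t+1)\longueur{\generateur{u}}$, and the inequality follows with no new periodicity argument at all.

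Your route---pushing $e(w_{[1]})$ past the third $\prefix{u}$-marker of the extended prefix---is plausible and would yield a comparable bound, but it is genuinely harder to make rigorous. You would need an argument in the style of Property~\ref{deltaeta}, comparing a factor $\generateur{u}\prefix{u}r''$ seen by $w_{[2]}$ against the $\generateur{u}$-periodic structure of $w_{[1]}$; the sketch you give (``Property~\ref{ifc} applied to $\dsuw$ itself'' together with primitivity of $\generateur{u}$) does not by itself pin down why $w_{[1]}$ cannot end inside the central $\generateur{u}^{e_1+e_2-k}$ block, since the core of the interrupt of $\dsuw$ is a priori unrelated to the $\prefix{u}$-markers of $\dsuu$. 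The paper's detour through $\dsuv$ buys you exactly this: the structure of $\dsuv$ already encodes the extended prefix, so one invocation of exhaustivity replaces the whole iterated marker analysis.
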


\begin{proof}
Suppose that the last canonical $\gamma$-mate in the \dsuu-family (discussed in \ref{gamas}) starts at position $t\longueur{\generateur{u}}+1$, hence the \dsuu-family has at most $(t+1)\longueur{\generateur{u}}$ elements. Since \dsuv\ is a member of the \dsuu-family and \dsuw\ is a $\delta$-mate of \dsuu, hence not a member of that family, \dsuw\ starts later than \dsuv\ (by definition of the families, \ref{family}) and \dsuw\ must be a mate of \dsuv. By definition of the $\delta$-mates, $\longueur{w} > \longueur{U}=\longueur{v}$, hence $\longueur{w} \geq \longueur{V}$ by property \ref{exhaust}, and $T(\dsuu, \dsuw) \geq (e_1+2e_2+t-k) \longueur{\generateur{u}}$ by lemma \ref{gamas} (note that the prefix $\generateur{u}^{e_1-t}\prefix{u}$ of the $\gamma$-mate corresponds to a suffix of $u_{[1]}$ and, as such, is not part of the tail). Since $e_1-t > k$ (also by lemma \ref{gamas}),  $T(\dsuu, \dsuw) \geq 2(e_2+t) \longueur{\generateur{u}}$ while $d(\dsuu, \dsuw) \leq (t+1)\longueur{\generateur{u}}$.
\end{proof}

\begin{property}\label{epsilons}
Let $x$ be a word starting with a FS double square \dsuu. If \dsuu\ has an $\epsilon$-mate \dsuv, then $T(\dsuu, \dsuv) \geq \lvert U \rvert + \lvert p \rvert - 2\lvert \generateur{u} \rvert$.
\end{property}

\begin{proof}
Since $\lvert v \rvert \leq \lvert \generateur{u} \rvert$, if $T(\dsuu, \dsuv) < \lvert U \rvert + \lvert p \rvert - 2\lvert \generateur{u} \rvert$, then $vv$ is repeated later in $x$ and \dsuv\ is not a FS double square.
\end{proof}

\begin{property}\label{zetas1}
Let $x$ be a word starting with a FS double square \dsuu. If \dsuu\ has a $\zeta$-mate \dsuv, then $T(\dsuu, \dsuv) \geq \lvert U \rvert - \lvert \generateur{u} \rvert + \lvert p \rvert$.
\end{property}

\begin{proof}
We have $x=\generateur{u}^{e_1}\prefix{u}\generateur{u}^{e_2}\generateur{u}^{e_1}\prefix{u}\generateur{u}^{e_2}\generateur{u}^{tr(\dsuu, x)}r$.\\
Suppose in order to derive a contradiction that $T(\dsuu, \dsuv) < \lvert U \rvert - \lvert \generateur{u} \rvert + \lvert p \rvert$. %Because a word starts before it ends, $s(v) < \longueur{UU} + tr(\dsuu, w)\lvert \generateur{u} \rvert + \lvert \lcp (\generateur{u}, r) \rvert + 1$, and $\longueur{v}>\longueur{\generateur{u}}$.
If $s(v) \geq\longueur{UU} + tr(\dsuu, x)\lvert \generateur{u} \rvert + \lvert \lcp (\generateur{u}, r) \rvert + 1$ because a word ends after it starts, $T(\dsuu, \dsuv) \geq \lvert U \rvert$.\\
If $s(v) < \longueur{UU} + tr(\dsuu, x)\lvert \generateur{u} \rvert + \lvert \lcp (\generateur{u}, r) \rvert + 1$, 
% suppose in order to derive a contradiction that $T(\dsuu, \dsuv) < \lvert U \rvert - \lvert \generateur{u} \rvert - \lvert s \rvert$. 
by definition of the $\zeta$-mates, $\forall n \geq 2, v \neq \tilde{\generateur{u}}^n$, that is $v = \conj{\generateur{u}}^{f_1}\conj{\prefix{u}}$ for $\conj{\generateur{u}}$ a conjugate of $\generateur{u},\ \conj{\prefix{u}}$ a proper prefix of $\conj{\generateur{u}}$ (not necessarily of size $\longueur{\prefix{u}}$) and an integer $f_1, 1 \leq f_1\leq e_1$ and there is a factor $\conj{\generateur{u}}$ at position $end(v)+1$, contradicting the synchronization principle, \ref{sp}.
\end{proof}

\begin{property}\label{zetagama}
Let $x$ be a word starting with a FS double square \dsuu. If \dsuu\ has a $\gamma$-mate \dsuv\ and a $\zeta$-mate \dsuw, then $d(\dsuu, \dsuw) - \frac{1}{2} T(\dsuu, \dsuw) \leq 0$.
\end{property}

\begin{proof}
Suppose that the last canonical $\gamma$-mate (discussed in \ref{gamas}) starts at position $t\longueur{\generateur{u}}+1$, hence \dsuu-family has at most $(t+1)\longueur{\generateur{u}}$ elements. Because \dsuu\ has a $\gamma$-mate, $x$ has the prefix:
\[ x'' = \generateur{u}^{e_1}\prefix{u}\generateur{u}^{e_2}\generateur{u}^{e_1}\prefix{u}\generateur{u}^{e_2}\generateur{u}^{e_1-k}\prefix{u}\generateur{u}^{e_1+e_2}\prefix{u}\generateur{u}^{e_2+t-k}, \]
with $k>e_2$, hence $tr(\dsuu, x) = e_1-k>t$, by lemma \ref{gamas}. \\
Suppose in order to derive a contradiction that $w_{[1]}$ ends before $\longueur{UU} + tr(\dsuu, x)\lvert \generateur{u} \rvert + \longueur{\prefix{u}} - \lvert \lcs (\generateur{u}, r) \rvert + 1 = \longueur{UU} + (e_1-k)\longueur{\generateur{u}} + \longueur{\prefix{u}} - \longueur{\lcs (\prefix{u}\suffix{u},\suffix{u}\prefix{u})} +1$, hence $w_{[1]}$ is a factor of $\generateur{u}^{e_1+e_2}\prefix{u}\generateur{u}^{e_1+e_2-k}$, $\longueur{w}>\longueur{\generateur{u}}$ (by definition of the $\epsilon$-mates) and, by synchronization principle,  we are left with three possibilities: either $w = \generateur{\conj{u}}^n$ for a certain $n$, or $w = \generateur{\conj{u}}^{l} \prefix{\conj{u}}$ for a certain $l$ or $w = \generateur{\conj{u}}^{l} \prefix{\conj{u}}\generateur{\conj{u}}^{m}$ for a certain $l$ and a certain $m$..

\begin{itemize}
\item If $w = \generateur{\conj{u}}^n$ for a certain $n$, then $w^2$ is part of the factor $\generateur{u}^{e_1+e_2-k}$ that start at position $\generateur{u}^{e_1}\prefix{u}\generateur{u}^{e_2}\generateur{u}^{e_1}\prefix{u}+1$, and $w^2$ is repeated further in $x$.
\item If $w = \generateur{\conj{u}}^{l} \prefix{\conj{u}}$ for a certain $l$, by synchronization principle \ref{fw}, $\longueur{\prefix{\conj{u}}}=\longueur{\prefix{u}}$ and $w_{[1]}$ ends in the second core of the interrupt of \dsuu, hence $w^2$ is repeated further (namely at position $s(w_{[1]})+(e_1+e_2-k)\longueur{\generateur{u}}+\longueur{\prefix{u}}$).
\item If $w = \generateur{\conj{u}}^{l} \prefix{\conj{u}}\generateur{\conj{u}}^{m}$ for a certain $l$ and a certain $m$ (note that $l$ and $m$ are fixed), as for $\beta$-mates, we will study the canonical form of $w$, that is when $w = \generateur{u}^l\prefix{u}\generateur{u}^m$. The results apply to the conjugates of \dsuw. Write:
\[  x'' = \generateur{u}^{e_1}\prefix{u}\generateur{u}^{e_1+e_2}\prefix{u}\generateur{u}^{e_1+e_2-k}\prefix{u}\generateur{u}^{e_1+e_2}\prefix{u}\generateur{u}^{e_2+t-k}, \]
\[ x'' = \generateur{u}^{e_1}\prefix{u}\generateur{u}^{e_1+e_2-l}\underbrace{\generateur{u}^l\prefix{u}\generateur{u}^{e_1+e_2-k-l}}_{w_{[1]}}\underbrace{\generateur{u}^l\prefix{u}\generateur{u}^{e_1+e_2-k-l}}_{w_{[2]}}\generateur{u}^{k+l}\prefix{u}\generateur{u}^{e_2+t-k}. \]
It follows that $m=e_1+e_2-k-l$. Because $W$ has $\generateur{u}^l\prefix{u}$ as a prefix, $W_{[2]}$ can only start at position $e(w_{[1]})+1$ (but then $\longueur{w}=\longueur{W}$, a contradiction), or at position $\longueur{\generateur{u}^{e_1}\prefix{u}\generateur{u}^{e_1+e_2}\prefix{u}\generateur{u}^{e_1+e_2-k}\prefix{u}\generateur{u}^{e_1+e_2-l}}+1$, that is after $e(w_{[1]})$, a contradiction.
%Following the proof of \ref{gamas}, we get that 
%\[ W= \generateur{u}^l\prefix{u}\generateur{u}^m\generateur{u}^{e_1+e_2-k-l-m}\generateur{u}^l\prefix{u}\generateur{u}^{e_1+e_2-k-l-m}\generateur{u}^m \] 
%and $x''$ would have the prefix: 
%\[ x''' = \generateur{u}^{e_1}\prefix{u}\generateur{u}^{e_1+e_2-l}WW \]
%\[ x''' = \generateur{u}^{e_1}\prefix{u}\generateur{u}^{e_1+e_2}\prefix{u}\generateur{u}^{e_1+e_2-k}\prefix{u}\generateur{u}^{e_1+e_2-k}\prefix{u}\generateur{u}^{e_1+e_2-k}\prefix{u}\generateur{u}^{e_1+e_2-k-l},\]
%contradicting the synchronization principle.
\end{itemize}

%It follows that $w_{[1]}$ cannot end before $\longueur{UU} + tr(\dsuu, w)\lvert \generateur{u} \rvert + \longueur{\prefix{u}} - \lvert \lcs (\generateur{u}, r) \rvert + 1 = \longueur{UU} + (e_1-k)\longueur{\generateur{u}} + \longueur{\prefix{u}} - \longueur{\lcs (\prefix{u}\suffix{u},\suffix{u}\prefix{u})} +1$; by definition \ref{zetamates} (and definition \ref{etas1}) and synchronization principle, \ref{sp}, we would have $w_{[1]}=\conj{\generateur{u}}^{f_1}\conj{\prefix{u}}$ for $\conj{\generateur{u}}$ a conjugate of $\generateur{u}$, $\conj{\prefix{u}}$ the prefix of size $\longueur{\prefix{u}}$ of $\conj{\generateur{u}}$ and a positive integer $f_1<e_1$, hence $w_{[1]}w_{[2]}$ would be repeated later in $w$ (namely in the factor $\generateur{u}^{e_2}\generateur{u}^{e_1-k}\prefix{u}\generateur{u}^{e_1+e_2}$ that starts at position $\longueur{\generateur{u}^{e_1}\prefix{u}\generateur{u}^{e_2}\generateur{u}^{e_1}\prefix{u}}+1$). \\
Now, $T(\dsuu, \dsuw) \geq (2(e_1+e_2)-k)\longueur{\generateur{u}} + 2\longueur{\prefix{u}} - \longueur{\lcs (\prefix{u}\suffix{u},\suffix{u}\prefix{u})}$, $e_1-k > t$ by lemma \ref{gamas}, $\longueur{\generateur{u}}> \longueur{\lcs (\prefix{u}\suffix{u},\suffix{u}\prefix{u})}$ by lemma \ref{dft} and $(2(e_1+e_2)-k-1)\longueur{\generateur{u}} \geq 2(e_2+t) \longueur{\generateur{u}}$, hence $T(\dsuu, \dsuw) \geq 2(t+1)\longueur{\generateur{u}}$ while $d(\dsuu, \dsuw) \leq (t+1)\longueur{\generateur{u}}$.
\end{proof}

\begin{property}\label{etas2}
Let $x$ be a word starting with a FS double square \dsuu. If \dsuu\ has an $\eta$-mate \dsuv\ then \dsuv's first exponent is 1,  $\generateur{v}=\generateur{u}^{n-1}\prefix{u}$ and $\prefix{v}=\suffix{u}$ for some integer $n\geq2$.
%$\lcp(\prefix{v}\suffix{v},\suffix{v}\prefix{v}) = \lcp(\prefix{u}\suffix{u},\suffix{u}\prefix{u})$ and $\lcs(\prefix{v}\suffix{v},\suffix{v}\prefix{v}) = \lcs(\prefix{u}\suffix{u},\suffix{u}\prefix{u})\suffix{u}$
\end{property}

\begin{proof}
By definitions of the $\eta$-mates, $v=\conj{\generateur{u}}^n$ for $n\geq2$, and by definition of the FS double squares, $v=\generateur{v}^{f_1}\prefix{v}$ for a primitive word $\generateur{v}$ and $\prefix{v}$ a prefix of $\generateur{v}$. If $f_1\geq2$, then $\conj{\generateur{u}}^2$ and $\generateur{v}^{f_1+1}$ have a common factor of length $\longueur{\generateur{u}}+ \longueur{\generateur{v}}$ and, by periodicity lemma \ref{fw}, $\generateur{u}$ is not primitive, a contradiction.\\
Also because $v=\conj{\generateur{u}}^n$ for $n\geq2$, we have  $V=\conj{\generateur{u}}^{n+k}\prefix{u}'$ for an integer $k<n$ and some proper prefix $\prefix{u}'$ of $\generateur{u}$. Since $V^2$ starts within the factor $\generateur{u}^{e_1+e_2}$ in $x=\generateur{u}^{e_1}\prefix{u}\generateur{u}^{e_1+e_2}\prefix{u}\generateur{u}^{e_2}r$, by synchronization principle, $\prefix{u}'=\prefix{u}$. It follows that $\generateur{v}=\generateur{u}^{n-1-k}\prefix{u}, \prefix{v}=\suffix{u}\generateur{u}^{k}$ for some integer $k$  and by synchronization principle, $k=0$, (note that $\generateur{u}$ has $\suffix{u}$ as a prefix).\\
%Now $\generateur{v}=\generateur{u}^{n-1}\prefix{u}, \prefix{v}=\suffix{u}$ for $\generateur{u}=\prefix{u}\suffix{u}$, hence 
%\[\lcp (\prefix{v}\suffix{v}, \suffix{v}\prefix{v})=\lcp (\generateur{u}^{n-1}\prefix{v}, \prefix{v}\generateur{u}^{n-1}) = \lcp (\generateur{u}^{n-1}\suffix{u}, \suffix{u}\generateur{u}^{n-1})= \lcp (\prefix{u}\suffix{u}, \suffix{u}\prefix{u}),\] 
%\[\lcs (\prefix{v}\suffix{v}, \suffix{v}\prefix{v})=\lcs (\generateur{u}^{n-1}\prefix{v}, \prefix{v}\generateur{u}^{n-1}) = \lcs (\generateur{u}^{n-1}\suffix{u}, \suffix{u}\generateur{u}^{n-1})=  \lcs (\prefix{u}\suffix{u}, \suffix{u}\prefix{u})\suffix{u}.\]\\

%We can write $W=\generateur{w}\prefix{w}\generateur{w}=\generateur{u}^{n-1}\prefix{u}\suffix{u}\generateur{u}^{n-1}\prefix{u}=\generateur{u}^{2n-1}\prefix{u}$

%$W^2= \generateur{u}^{2n-1}\prefix{u}\generateur{u}^{2n-1}\prefix{u}$ and $x$ has the prefix:\\
%$x'=\generateur{u}^{e_1}\prefix{u}\generateur{u}^{e_1+e_2}\prefix{u}\generateur{u}^{2n-1}\prefix{u}r'$ for a factor $r', \lcp(r',\suffix{u})\neq\suffix{u}$ (by the previous note, $\generateur{u}$ is not a prefix of $r'$ either) and $tr(\dsuu, w)= 2n-1-e_2$.\\
\end{proof}

\begin{corollary}\label{prefsuf}
Let $x$ be a word starting with a FS double square \dsuu. If \dsuu\ has an $\eta$-mate, then $\suffix{u}$ is a prefix of $\generateur{u}$.
\end{corollary}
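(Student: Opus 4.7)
The plan is to read off the conclusion directly from Property \ref{etas2}. That property gives, for any $\eta$-mate \dsuv\ of \dsuu, the canonical factorization $\generateur{v}=\generateur{u}^{n-1}\prefix{u}$ with $\prefix{v}=\suffix{u}$, for some integer $n\geq 2$. The key observation is that $\prefix{v}$ is, by Lemma \ref{lams}, a proper prefix of the primitive root $\generateur{v}$, so $\suffix{u}$ is a prefix of $\generateur{u}^{n-1}\prefix{u}$.

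From there the argument is a length comparison. Since $n-1\geq 1$, the word $\generateur{u}^{n-1}\prefix{u}$ begins with a full copy of $\generateur{u}$. Because $\prefix{u}$ is a proper (non-empty) prefix of $\generateur{u}=\prefix{u}\suffix{u}$, one has $\longueur{\suffix{u}}<\longueur{\generateur{u}}$. Thus $\suffix{u}$, being a prefix of $\generateur{u}^{n-1}\prefix{u}$ of length strictly less than $\longueur{\generateur{u}}$, must be a prefix of $\generateur{u}$, which is the claim.

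There is essentially no obstacle here: the corollary is a one-line extraction from Property \ref{etas2} followed by the trivial length bound $\longueur{\suffix{u}}<\longueur{\generateur{u}}$. The only point worth flagging is to check the case $n=2$ explicitly, where $\generateur{v}=\generateur{u}\prefix{u}$ still begins with a full copy of $\generateur{u}$, so the argument goes through uniformly for all admissible $n$.
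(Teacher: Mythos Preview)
Your proposal is correct and follows exactly the paper's approach: invoke Property~\ref{etas2} to obtain $\generateur{v}=\generateur{u}^{n-1}\prefix{u}$ and $\prefix{v}=\suffix{u}$, then use Lemma~\ref{lams} to conclude that $\suffix{u}=\prefix{v}$ is a prefix of $\generateur{v}$, hence of $\generateur{u}$. Your explicit length comparison $\longueur{\suffix{u}}<\longueur{\generateur{u}}$ is a welcome clarification of the final step, which the paper leaves implicit.
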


\begin{proof}
By property \ref{etas2}, $\generateur{v}=\generateur{u}^{n-1}\prefix{u}$ and $\prefix{v}=\suffix{u}$. By lemma \ref{lams}, $\prefix{v}$ is a prefix of $\generateur{v}$, hence $\suffix{u}$ is a prefix of $\generateur{u}$
\end{proof}

\begin{corollary}\label{etas4}
Let $x$ be a word starting with a FS double square \dsuu. If \dsuu\ has an $\eta$-mate \dsuv\ then $\longueur{\lcp(\prefix{v}\suffix{v},\suffix{v}\prefix{v})} = \longueur{\lcp(\prefix{u}\suffix{u},\suffix{u}\prefix{u})}-\longueur{\generateur{u}}+\longueur{\prefix{u}}$ and $\lcs(\prefix{v}\suffix{v},\suffix{v}\prefix{v}) = \lcs(\prefix{u}\suffix{u},\suffix{u}\prefix{u})$.
\end{corollary}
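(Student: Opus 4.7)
The plan is to make the factorization of $\dsuv$ fully explicit and then reduce both computations to $\lcp$'s between two conjugates of the primitive word $\generateur{u}$, where the synchronization principle will confine the comparison to a single period. By Property \ref{etas2}, $\generateur{v}=\generateur{u}^{n-1}\prefix{u}$ and $\prefix{v}=\suffix{u}$ for some $n\geq 2$; by Corollary \ref{prefsuf}, $\suffix{u}$ is a prefix of $\generateur{u}$, so I may write $\generateur{u}=\suffix{u}\tau$ with $\longueur{\tau}=\longueur{\prefix{u}}$. Stripping $\prefix{v}=\suffix{u}$ off the front of the initial $\generateur{u}$ in $\generateur{v}$ then gives $\suffix{v}=\tau\generateur{u}^{n-2}\prefix{u}$, whence $\suffix{v}\prefix{v}=\tau\generateur{u}^{n-2}\prefix{u}\suffix{u}=\tau\generateur{u}^{n-1}=(\tau\suffix{u})^{n-1}\tau$.

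For the longest common prefix I would observe that $\prefix{v}\suffix{v}=\generateur{u}^{n-1}\prefix{u}$ is a prefix of the $\longueur{\generateur{u}}$-periodic word $\generateur{u}^\infty$, while $\suffix{v}\prefix{v}=(\tau\suffix{u})^{n-1}\tau$ is a prefix of $(\tau\suffix{u})^\infty$. Because $\generateur{u}=\suffix{u}\tau$ and $\tau\suffix{u}$ are distinct conjugates of the primitive word $\generateur{u}$, Corollary \ref{sp} forces $\longueur{\lcp(\generateur{u},\tau\suffix{u})}<\longueur{\generateur{u}}$, and this single-period cap transfers directly to the $\lcp$ of our two finite strings (both of length at least $\longueur{\generateur{u}}$). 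Since $\prefix{u}\suffix{u}$ and $\tau\suffix{u}$ share the trailing $\suffix{u}$, the comparison reduces to $\longueur{\lcp(\prefix{u},\tau)}$. To tie this to $\longueur{p}$ I note that $\suffix{u}$ is a prefix of both $\prefix{u}\suffix{u}$ and $\suffix{u}\prefix{u}$, so $\longueur{p}\geq\longueur{\suffix{u}}$; past position $\longueur{\suffix{u}}$, the comparison of $\prefix{u}\suffix{u}=\suffix{u}\tau$ with $\suffix{u}\prefix{u}$ becomes letter-for-letter the comparison of $\tau$ with $\prefix{u}$. This yields $\longueur{\lcp(\prefix{v}\suffix{v},\suffix{v}\prefix{v})}=\longueur{p}-\longueur{\suffix{u}}=\longueur{p}-\longueur{\generateur{u}}+\longueur{\prefix{u}}$.

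For the longest common suffix I would reverse everything and rerun the same argument, writing $x^R$ for the reverse of $x$. The reverse of $\generateur{u}^{n-1}\prefix{u}$ is $(\prefix{u}^R\suffix{u}^R)^{n-1}\prefix{u}^R$, a prefix of $(\prefix{u}^R\suffix{u}^R)^\infty$; the reverse of $\tau\generateur{u}^{n-1}$ is $(\suffix{u}^R\prefix{u}^R)^{n-1}\tau^R$, which is a prefix of $(\suffix{u}^R\prefix{u}^R)^\infty$ because $\generateur{u}=\suffix{u}\tau$ makes $\tau$ a suffix of $\generateur{u}$ and hence $\tau^R$ a prefix of $\generateur{u}^R=\suffix{u}^R\prefix{u}^R$. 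The reversed word $\generateur{u}^R$ is still primitive, so Corollary \ref{sp} again caps the $\lcp$ of the two reversed strings at $\longueur{\lcp(\prefix{u}^R\suffix{u}^R,\suffix{u}^R\prefix{u}^R)}=\longueur{s}$; un-reversing and using that both $\lcs(\prefix{v}\suffix{v},\suffix{v}\prefix{v})$ and $s$ are suffixes of $\tau\generateur{u}^{n-1}$ of common length $\longueur{s}$ forces equality as strings, $\lcs(\prefix{v}\suffix{v},\suffix{v}\prefix{v})=s$. The step where I expect to spend the most care is the pair of periodicity-lifting identities $\tau\generateur{u}^{n-1}=(\tau\suffix{u})^{n-1}\tau$ and its mirror image: both rely crucially on $\generateur{u}=\suffix{u}\tau$ from Corollary \ref{prefsuf}, without which neither string could be recognized as a prefix of a $\longueur{\generateur{u}}$-periodic extension and the synchronization argument could not even begin.
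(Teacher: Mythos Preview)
Your proof is correct and follows essentially the same approach as the paper's: both invoke Property~\ref{etas2} and Corollary~\ref{prefsuf} to write $\generateur{u}=\suffix{u}\tau$ (the paper calls your $\tau$ by the name $\suffix{u'}$), read off $\suffix{v}=\tau\generateur{u}^{n-2}\prefix{u}$, and then reduce each of $\lcp(\prefix{v}\suffix{v},\suffix{v}\prefix{v})$ and $\lcs(\prefix{v}\suffix{v},\suffix{v}\prefix{v})$ to a single-period comparison between two conjugates of $\generateur{u}$, finally relating the result to $p$ and $s$ via the identity $\prefix{u}\suffix{u}=\suffix{u}\tau$. The only stylistic difference is that for the $\lcs$ the paper rewrites $\generateur{u}^{n-1}\prefix{u}=\prefix{u}(\suffix{u}\prefix{u})^{n-1}$ and $\tau\generateur{u}^{n-1}=\tau(\prefix{u}\suffix{u})^{n-1}$ to expose the suffix directly, whereas you pass through reversal; both routes arrive at $\lcs(\prefix{u}\suffix{u},\suffix{u}\prefix{u})$ for the same reason.
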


\begin{proof}
By property \ref{etas2}, $\generateur{v}=\generateur{u}^{n-1}\prefix{u}$, $\prefix{v}=\suffix{u}$ and by corollary \ref{prefsuf}, $\suffix{u}$ is a prefix of $\generateur{u}$, hence $\generateur{u}=\suffix{u}\suffix{u'}$ for the suffix $\suffix{u'}$ of $\generateur{u}$ of length $\longueur{\prefix{u}}$. It follows that $\prefix{v} = \suffix{u}, \suffix{v}=\suffix{u'}\generateur{u}^{n-2}\prefix{u}$ and:
\begin{align*}
\lcs (\prefix{v}\suffix{v}, \suffix{v}\prefix{v})&=\lcs (\generateur{u}^{n-1}\prefix{u}, \suffix{u'}\generateur{u}^{n-1}) = \lcs (\prefix{u}(\suffix{u}\prefix{u})^{n-1}, \suffix{u'}(\prefix{u}\suffix{u})^{n-1}) \\ &=  \lcs (\prefix{u}\suffix{u}, \suffix{u}\prefix{u}),\\
\lcp (\prefix{v}\suffix{v}, \suffix{v}\prefix{v})&=\lcp (\generateur{u}^{n-1}\prefix{u}, \suffix{u'}\generateur{u}^{n-1}) = \lcp(\suffix{u'},\prefix{u}). 
\end{align*} %= \lcp (\generateur{u}^{n-1}\suffix{u}, \suffix{u}\generateur{u}^{n-1})= \lcp (\prefix{u}\suffix{u}, \suffix{u}\prefix{u}),\] 
%\[\lcs (\prefix{v}\suffix{v}, \suffix{v}\prefix{v})=\lcs (\generateur{u}^{n-1}\prefix{u}, \suffix{u'}\generateur{u}^{n-1}) = \lcs (\prefix{u}(\suffix{u}\prefix{u})^{n-1}, \suffix{u'}(\prefix{u}\suffix{u})^{n-1})=  \lcs (\prefix{u}\suffix{u}, \suffix{u}\prefix{u}).\]\\
Now, $\lcp(\prefix{u}\suffix{u}, \suffix{u}\prefix{u})= \lcp(\suffix{u}\suffix{u'}, \suffix{u}\prefix{u})$ and since $\longueur{\lcp(\prefix{u}\suffix{u}, \suffix{u}\prefix{u})}<\longueur{\generateur{u}}$, $\lcp(\suffix{u}\suffix{u'}, \suffix{u}\prefix{u}) = \suffix{u}\lcp(\suffix{u'},\prefix{u})$, and $\longueur{\lcp (\prefix{v}\suffix{v}, \suffix{v}\prefix{v})} = \longueur{\lcp(\prefix{u}\suffix{u}, \suffix{u}\prefix{u})}-\longueur{\suffix{u}}$ while $\longueur{\suffix{u}}=\longueur{\generateur{u}}-\longueur{\prefix{u}}$.
\end{proof}

%\begin{corollary}\label{taileta}
%\textcolor{red}{usefulness ?}Let $x$ be a word starting with a FS double square \dsuu. If \dsuu\ has an $\eta$-mate \dsuv\ with $\longueur{v}=n\longueur{\generateur{u}}$, then $t(\dsuu, \dsuv) \geq (e_1+e_2-n)\longueur{\generateur{u}}$.
%\end{corollary} 

%\begin{proof}
%Suppose that $t(\dsuu, \dsuv) < (e_1+e_2-n)\longueur{\generateur{u}}$, then $vv$ is repeated at position $s(v)+\longueur{\generateur{u}}$, a contradiction.
%\end{proof}

\begin{corollary}\label{etas5}
Let $x$ be a word starting with a FS double square \dsuu. If \dsuu\ has an $\eta$-mate \dsuv, then \dsuu's first exponent is strictly greater than two.
\end{corollary}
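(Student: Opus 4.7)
I will argue by contradiction: assume $e_1\leq 2$ and derive a contradiction from the existence of the $\eta$-mate $\dsuv$. By property \ref{etas2}, we can write $v=\generateur{u}^n$ and $V=\generateur{u}^{2n-1}\prefix{u}$ for some integer $n\geq 2$. Since $v^2=\generateur{u}^{2n}$ is a factor of $x$ starting strictly after position $\longueur{u}$, the synchronization principle (corollary \ref{sp}) forces $s(v)$ to sit at a $\generateur{u}$-boundary of $x$, and requires a $\generateur{u}$-power of length at least $2n\geq 4$ starting at $s(v)$.

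Next I would combine the factorization $x=\generateur{u}^{e_1}\prefix{u}\generateur{u}^{e_1+e_2}\prefix{u}\generateur{u}^{e_2+tr(\dsuu,x)}r$ with the bound $tr(\dsuu,x)\leq e_1-e_2$ from property \ref{trace} to observe that, after position $\longueur{u}$, the only $\generateur{u}$-power block long enough to host $\generateur{u}^{2n}$ is the middle block $\generateur{u}^{e_1+e_2}$ (the trailing block $\generateur{u}^{e_2+tr(\dsuu,x)}$ has length at most $e_1\leq 2$). This forces $2n\leq e_1+e_2$, which immediately rules out $e_1=1,e_2=1$ (where $e_1+e_2=2$) and $e_1=2,e_2=1$ (where $e_1+e_2=3$).

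The remaining and most delicate subcase is $e_1=e_2=2$: here $2n\leq 4$ permits $n=2$, giving $v=\generateur{u}^2$, $V=\generateur{u}^3\prefix{u}$, $tr(\dsuu,x)=0$, and $s(v)=\longueur{u}+1$ as the unique aligned slot where $\generateur{u}^4$ fits inside the middle block. A direct computation then places the second occurrence of $V$ in $VV$ at position $5\longueur{\generateur{u}}+2\longueur{\prefix{u}}+1$, which lies at offset $\longueur{\prefix{u}}$ inside the fourth $\generateur{u}$ of the middle block; the substring of $x$ starting there reads $\suffix{u}\prefix{u}\generateur{u}^2 r$, so its first $\longueur{\generateur{u}}$ characters form $\suffix{u}\prefix{u}$. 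Requiring this occurrence of $V$ to begin with $\generateur{u}=\prefix{u}\suffix{u}$ would force $\suffix{u}\prefix{u}=\prefix{u}\suffix{u}$, exhibiting a nontrivial conjugate of the primitive $\generateur{u}$ equal to $\generateur{u}$ itself, which contradicts corollary \ref{sp}. The main obstacle is precisely this tight subcase: the short square $v^2$ fits unproblematically inside the middle $\generateur{u}^4$ block, so only by exploiting the structure of the long square $V^2$ together with the primitivity of $\generateur{u}$ can the contradiction be obtained.
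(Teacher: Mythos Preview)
Your argument has a genuine gap at the step ``the only $\generateur{u}$-power block long enough to host $\generateur{u}^{2n}$ is the middle block $\generateur{u}^{e_1+e_2}$, so $2n\le e_1+e_2$''. You are implicitly assuming that an occurrence of $\generateur{u}^{2n}$ cannot straddle the separator $\prefix{u}$ between the middle block and the trailing block. But precisely in the $\eta$-mate setting, corollary~\ref{prefsuf} tells you that $\suffix{u}$ is a \emph{prefix} of $\generateur{u}$; hence the $\prefix{u}$ separator followed by the first $\longueur{\suffix{u}}$ letters of the next $\generateur{u}$ already equals $\prefix{u}\suffix{u}=\generateur{u}$, and a run of $\generateur{u}$'s can legitimately continue across the separator. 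Concretely, with $e_1=2$, $e_2=1$, $n=2$ one checks that $v^2=\generateur{u}^{4}$ does occur starting at $\longueur{u}+1$, spanning the separator, so your inequality $2n\le e_1+e_2=3$ simply fails there; the contradiction in that subcase only appears once you look at $V^2$, which you never do. The same issue undermines the ``unique aligned slot'' claim in your $e_1=e_2=2$ analysis.

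The paper's proof avoids all of this by using the stronger structural consequence of the $\eta$-mate: because $VV=\generateur{u}^{2n-1}\prefix{u}\generateur{u}^{2n-1}\prefix{u}$ sits inside $x$, one gets the prefix $x=\generateur{u}^{e_1}\prefix{u}\generateur{u}^{e_1+e_2}\prefix{u}\generateur{u}^{2n-1}\prefix{u}r'$. From here the argument is a single line: if $e_1\le 2$ then (since $2n-1\ge 3\ge e_1$) the factor $\generateur{u}^{e_1+e_2}\prefix{u}\generateur{u}^{2n-1}\prefix{u}$ contains $u^2=\generateur{u}^{e_1}\prefix{u}\generateur{u}^{e_1}\prefix{u}$, so $u^2$ has a later occurrence and $\dsuu$ is not an FS double square. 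Equivalently, that prefix structure forces $tr(\dsuu,x)=2n-1-e_2$, and together with your own bound $tr\le e_1-e_2$ from property~\ref{trace} this gives $e_1\ge 2n-1\ge 3$ directly, with no case split at all. The missing idea in your attempt is to exploit $VV$ (not just $vv$) to pin down the prefix of $x$ beyond $UU$.
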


\begin{proof}
Because \dsuu\ has an $\eta$-mate, by property \ref{etas2}, \[x= \generateur{u}^{e_1}\prefix{u}\generateur{u}^{e_1+e_2}\prefix{u}\generateur{u}^{2n-1}\prefix{u}r'\]
 for an integer $n\geq2$ and a suffix $r'$ of $x$ and $\longueur{\lcp(r', \generateur{u})}< \longueur{\suffix{u}}$. If $e_1\leq2$ then $u^2$ has a further occurrence in $x$, namely in the factor $\generateur{u}^{e_1+e_2}\prefix{u}\generateur{u}^{2n-1}\prefix{u}$ and \dsuu\ is not a FS double square. \\
\end{proof}

\begin{corollary}\label{etas3}
Let $x$ be a word starting with a FS double square \dsuu. If \dsuu\ has an $\eta$-mate \dsuv\ then \dsuv\ cannot have $\beta$-mates.
\end{corollary}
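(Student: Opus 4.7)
The plan is to derive this corollary as an almost immediate consequence of Property \ref{etas2}, which pins down the canonical factorization of an $\eta$-mate very tightly: its first exponent is forced to be $1$. The only piece of machinery that needs to be combined with this is the definition of a $\beta$-mate and the formula for its multiplicity from Property \ref{betas}.

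First I would recall that Property \ref{etas2} gives, for an $\eta$-mate \dsuv\ of \dsuu, the factorization $v = \generateur{v}^{f_1}\prefix{v}$ with $\generateur{v} = \generateur{u}^{n-1}\prefix{u}$, $\prefix{v} = \suffix{u}$, and crucially $f_1 = 1$. Next, I would invoke the structure of $\beta$-mates established in the discussion following Definition~\ref{fsds} and sharpened in Property~\ref{betas}: a $\beta$-mate of \dsuv\ is obtained by shifting the first exponent down and the second exponent up by some integer $t$ with $1 \leq t \leq \lfloor \frac{f_1-f_2}{2}\rfloor$, and the count of $\beta$-mates is bounded by $\lfloor \frac{f_1-f_2}{2}\rfloor(\lvert p \rvert + \lvert s \rvert + 1)$ (or the corresponding trace-dependent bound). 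In particular, the existence of at least one $\beta$-mate requires $\lfloor \frac{f_1-f_2}{2}\rfloor \geq 1$, i.e. $f_1 \geq f_2 + 2$.

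Since $f_2 \geq 1$ by Lemma~\ref{lams}, any FS double square possessing a $\beta$-mate must have its first exponent at least $3$. But for \dsuv\ we have $f_1 = 1$, whence $\lfloor \frac{f_1 - f_2}{2}\rfloor \leq 0$ and no integer $t$ satisfies $1 \leq t \leq \lfloor \frac{f_1-f_2}{2}\rfloor$. Hence no $\beta$-mate of \dsuv\ can exist, which establishes the corollary.

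There is essentially no obstacle here; the only subtlety is to make sure the argument really only uses the first exponent of \dsuv\ (and not some delicate interaction with \dsuu\ or with the suffix of $x$ beyond \dsuv), so that the conclusion is purely a structural one about \dsuv\ as a FS double square. Because the bound $t \geq 1$ is built into the very definition of a $\beta$-mate (a $\beta$-mate is a $u$-close double square that is \emph{not} an $\alpha$-mate and whose long square is a cyclic shift of the long square), and because $f_1 = 1$ forces $f_1 - f_2 \leq 0$, the argument is airtight without any further case analysis.
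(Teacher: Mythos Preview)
Your proof is correct and follows essentially the same approach as the paper: invoke Property~\ref{etas2} to get that \dsuv's first exponent is $1$, and then Property~\ref{betas} (via the range $1 \leq t \leq \lfloor \frac{f_1-f_2}{2}\rfloor$) to conclude that \dsuv\ admits no $\beta$-mates. The paper's proof is simply the two-line version of what you wrote.
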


\begin{proof}
By property \ref{etas2}, \dsuv's first exponent is 1 and by property \ref{betas}, \dsuv\ cannot have $\beta$-mates.
\end{proof}

\begin{property}\label{etassimple}
Let $x$ be a word starting with a FS double square \dsuu. The number of $\eta$-mates of \dsuu\ is at most $\longueur{p}+\longueur{s}-\longueur{\generateur{u}}+\longueur{\prefix{u}}$. Moreover, if \dsuu\ has an $\eta$-mate, $\longueur{x} \geq \longueur{Uu} + 3\longueur{\generateur{u}}+\longueur{\prefix{u}}$.
\end{property}

\begin{proof}
By corollaries \ref{etas4} and \ref{etas3}, \dsuv\ has at most $\longueur{p}+\longueur{s}-\longueur{\suffix{u}}=\longueur{p}+\longueur{s}-\longueur{\generateur{u}}+\longueur{\prefix{u}}$ shifts.\\ %Also, $v^2=\generateur{u}^k$ for an integer $k$ and $\generateur{u}^k$ has $\longueur{\generateur{u}}-1$ conjugates.\\
By property \ref{etas2}, $x=\generateur{u}^{e_1}\prefix{u}\generateur{u}^{e_1+e_2}\prefix{u}\generateur{u}^{2n-1}\prefix{u}r$ for some suffix $r$ of $x$ and the integer $n\geq2$. If follows that $\longueur{x} \geq \longueur{Uu} + 3\longueur{\generateur{u}}+\longueur{\prefix{u}}$.
\end{proof}

\begin{property}\label{gammaeta}
Let $x$ be a word starting with a FS double square \dsuu. If \dsuu\ has a $\gamma$-mate, then \dsuu\ cannot have $\eta$-mates.
\end{property}

\begin{proof}
As in the proof \ref{gamas}, if \dsuu\ has a $\gamma$-mate, then $x$ has the prefix:
\[ x'' = \generateur{u}^{e_1}\prefix{u}\generateur{u}^{e_2}\generateur{u}^{e_1}\prefix{u}\generateur{u}^{e_2}\generateur{u}^{e_1-k}\prefix{u}\generateur{u}^{e_1+e_2}\prefix{u}\generateur{u}^{e_2+t-k}, \]
for fixed $t$ and $k$, and any $\eta$-mate would have its small repetition repeated later in $x$ (namely in the factor $\generateur{u}^{e_1+e_2}\prefix{u}\generateur{u}$ at that starts at position \\$\longueur{ \generateur{u}^{e_1}\prefix{u}\generateur{u}^{e_2}\generateur{u}^{e_1}\prefix{u}\generateur{u}^{e_2}\generateur{u}^{e_1-k}\prefix{u}}+1$).
\end{proof}

\begin{property}\label{deltaeta}
Let $x$ be a word starting with a FS double square \dsuu. If \dsuu\ has a $\delta$-mate \dsuv\ and an $\eta$-mate \dsuw , then $T(\dsuu, \dsuv) \geq (e_1+e_2+2)\longueur{\generateur{u}} + 2 \longueur{\prefix{u}}$.
\end{property}

\begin{proof}
Because $x$ has $\eta$-mates, by property \ref{etas2}, $x$ has the prefix:
\[x'=\generateur{u}^{e_1}\prefix{u}\generateur{u}^{e_1+e_2}\prefix{u}\generateur{u}^{2n-1}\prefix{u}r'\] for a factor $r', \longueur{\lcp(r',\suffix{u})}<\longueur{\suffix{u}}$, hence $\suffix{u}$ is not a prefix of $r'$, and by corollary \ref{prefsuf}, $\generateur{u}$ is not a prefix of $r'$ either, and $tr(\dsuu, w)= 2n-1-e_2$.\\
Now, \dsuv\ is a $\delta$-mate of \dsuu: $\longueur{v}>\longueur{U}$. By property \ref{deltas}, $T(\dsuu, \dsuv) \geq \lvert U \rvert$. Suppose that $T(\dsuu, \dsuv)\leq\longueur{\generateur{u}^{e_1+e_2}\prefix{u}\generateur{u}^{2n-2}\prefix{u}}$ for $n \geq 2$, then $v_{[2]}$ contains the factor $\generateur{u}\prefix{u}r''$ for $r''$ a prefix of $r'$, $\longueur{r''} = \longueur{\generateur{u}}$, $r''\neq\suffix{u}, r''\neq\generateur{u}$ while $v_{[1]}$ is a factor of $\generateur{u}^{e_1}\prefix{u}\generateur{u}^{e_1+e_2}\prefix{u}\generateur{u}^{2n-2}$ contradicting the synchronization principle.
\end{proof}

\begin{property}\label{zetaeta}
Let $x$ be a word starting with a FS double square \dsuu\ with an $\eta$-mate \dsuv\ (that is, $\longueur{v}=n\longueur{\generateur{u}}$ for some $n\geq2$). If \dsuu\ has a $\zeta$-mate \dsuw\, then $T(\dsuu, \dsuw)\geq (e_1+e_2+n-1)\longueur{\generateur{u}}+2\longueur{\prefix{u}}-\longueur{s}$.
%$\zeta$ and $\eta$
\end{property}

\begin{proof}
Because \dsuu\ has an $\eta$-mates \dsuv, by property \ref{etas2}, $x$ has the prefix:
\[x'=\generateur{u}^{e_1}\prefix{u}\generateur{u}^{e_1+e_2}\prefix{u}\generateur{u}^{2n-1}\prefix{u}r'\] for a certain $n\geq2$, with $\lcp(\generateur{u},r')\neq \prefix{u}$ and $\lcp(\generateur{u},r')\neq \suffix{u}$.\\
Suppose that $T(\dsuu,\dsuw) < (e_1+e_2+n)\longueur{\generateur{u}}+2\longueur{\prefix{u}}$, hence $w_{[1]}$ is a factor of $\generateur{u}^{e_1+e_2}\prefix{u}\generateur{u}^{n}\prefix{u}$ and, by definition of the $\zeta$-mates, $\longueur{w}>\longueur{\generateur{u}}$.\\
%\textcolor{red}{rewrite: don't use $n$}\\
If $\exists m \geq 2, \longueur{w} = m\longueur{\generateur{u}}$ and $T(\dsuu, \dsuw) < (e_1+e_2+n)\longueur{\generateur{u}}+2\longueur{\prefix{u}}$, then, by definition of the $\eta$-mates,  $e(w_{[1]}) > \longueur{u\generateur{u}^{e_1+e_2-1}p}$. 
\begin{itemize}
\item If $e(w_{[1]}) < \longueur{u\generateur{u}^{e_1+e_2}\prefix{u}}-\longueur{s}$, then $w_{[2]}$ contains $w_2$, the factor of length $\longueur{\generateur{u}}$ that starts with the core of the interrupt of \dsuu\ while $w_{[1]}$ doesn't, a contradiction.
\item If $\longueur{u\generateur{u}^{e_1+e_2}\prefix{u}}-\longueur{s} \leq e(w_{[1]}) \leq \longueur{u\generateur{u}^{e_1+e_2}\prefix{u}}+\longueur{p}$, then $w_{[1]}$ has a certain conjugate $\conj{\generateur{u}}$ of $\generateur{u}$ as a prefix while $w_{[2]}$ starts with another one, a contradiction.
\item If $\longueur{u\generateur{u}^{e_1+e_2}\prefix{u}}+\longueur{p} < e(w_{[1]})$, then either $s(w_{[1]}) < \longueur{u\generateur{u}^{e_1+e_2-1}\prefix{u}}+\longueur{p}$ and $w_{[1]}$, or $ s(w_{[1]}) \geq \longueur{u\generateur{u}^{e_1+e_2-1}\prefix{u}}+\longueur{p}$:\\
If $s(w_{[1]}) < \longueur{u\generateur{u}^{e_1+e_2}\prefix{u}}-\longueur{s}$, $w_{[1]}$ contains $w_2$ the factor of length $\longueur{\generateur{u}}$ that starts with the core of the interrupt of \dsuu\ while $w_{[2]}$ doesn't, a contradiction. \\
If $s(w_{[1]}) \geq \longueur{u\generateur{u}^{e_1+e_2}\prefix{u}}-\longueur{s}$, then $w = \conj{\generateur{u}}^m$. If $T(\dsuu,\dsuw) < (e_1+e_2+n-1)\longueur{\generateur{u}}+2\longueur{\prefix{u}}-\longueur{s}$, then $m\leq n-1$ and $w^2$ has another occurrence in $x$ (namely at position $s(w)+\longueur{\generateur{u}}$), a contradiction, hence $T(\dsuu, \dsuw)\geq (e_1+e_2+n-1)\longueur{\generateur{u}}+2\longueur{\prefix{u}}-\longueur{s}$.

% and, by synchronization principle, \ref{sp}, $w^2$ has to end before $\longueur{\generateur{u}^{e_1}\prefix{u}\generateur{u}^{e_1+e_2}\prefix{u}\generateur{u}^{2n-1}\prefix{u}\lcp (\generateur{u},r')}$.
\end{itemize}
%$s(w)\geq \longueur{\generateur{u}^{e_1}\prefix{u}\generateur{u}^{e_1+e_2}\prefix{u}}$. Also $w = \conj{\generateur{u}}^n$, and, by synchronization principle, \ref{sp}, $w^2$ has to end before $\longueur{\generateur{u}^{e_1}\prefix{u}\generateur{u}^{e_1+e_2}\prefix{u}\generateur{u}^{2n-1}\prefix{u}\lcp (\generateur{u},r')}$\\
If, $\forall m,\longueur{w}\neq m\longueur{\generateur{u}}$, rewrite $x'$ as $\generateur{u}^{e_1}\prefix{u}\generateur{u}^{e_1+e_2}\prefix{u}\generateur{u}^{n}\prefix{u}(\suffix{u}\prefix{u})^{n-1}r'$ to notice that $w_{[1]}$ has to start before $\longueur{\generateur{u}^{e_1}\prefix{u}\generateur{u}^{e_1+e_2}\prefix{u}}-\longueur{s}$ (otherwise: $w^2$ and $\conj{\generateur{u}}^n$ have a common factor of length $\longueur{v}+\longueur{\generateur{u}}$ and the synchronization principle applies). By synchronization principle, we get that $w=\conj{\generateur{u}}^p\conj{\prefix{u}}\conj{\generateur{u}}^q$ for a conjugate $\conj{\generateur{u}}$ of $\generateur{u}$, $\conj{\prefix{u}}$ its prefix of length $\longueur{\prefix{u}}$ and the integers $p$ and $q$. Because, in $x'$, $\lcp(\generateur{u},r')\neq \prefix{u}$, we get that $q=0$. Now $W=\conj{\generateur{u}}^p\conj{\prefix{u}}\conj{\generateur{u}}^{q'}\conj{\prefix{u'}}$ for a prefix $\conj{\prefix{u'}}$ of $\conj{\generateur{u}}$ and an integer $q'$: by synchronization principle, $\conj{\prefix{u'}}=\emptyset$ and $\lcp(\generateur{u},r')=\generateur{u}$, a contradiction.
\end{proof}

\begin{property}\label{epsilonetabis}
Let $x$ be a word starting with a FS double square \dsuu. If \dsuu\ has a $\eta$-mate \dsuv\ then it has at most $2\longueur{\generateur{u}} - \longueur{s} - \longueur{p}$ $\epsilon$-mates.
\end{property}

\begin{proof}
Because \dsuu\ has $\eta$-mates, by property \ref{etas2}, $x$ has the prefix:\\
\[x'=\generateur{u}^{e_1}\prefix{u}\generateur{u}^{e_1+e_2}\prefix{u}\generateur{u}^{2n-1}\prefix{u}r'\]
for some $n\geq2$ hence there is a factor $\generateur{u}^{2n-1}\prefix{u}$ at position $\longueur{Uu}+1$, and any $\epsilon$-mate of \dsuu\ has to have $v^2$ ending after $\longueur{Uu}+\lcp(\prefix{u}\suffix{u}, \suffix{u}\prefix{u})+1$ (otherwise, $v^2$ is a factor of $\generateur{u}^3$), hence $v_{[1]}$ has to start after $\longueur{Uu}+\lcp(...) - 2\longueur{\generateur{u}}+1$ since $\longueur{v} \leq \longueur{\generateur{u}}$. For the same reason, $w_{[1]}$ has to start before $\longueur{Uu}-\lcs(\prefix{u}\suffix{u}, \suffix{u}\prefix{u})+1$. If an $\epsilon$-mate start at each of those positions, we have $2\longueur{\generateur{u}}-\longueur{s}-\longueur{p}$ $\epsilon$-mates.
\end{proof}

\begin{property}\label{betaetatous}
Let $x$ be a word starting with a FS double square \dsuu. If \dsuu\ has $\alpha$, $\beta$, $\epsilon$ and $\eta$-mates, then there is at most $(\frac{e_1-e_2-1}{2}+2)\longueur{\generateur{u}}+\longueur{\prefix{u}}-\longueur{s}$ of them.% if $t(\dsuu,w) = \lfloor \frac{e_1-e_2}{2}\rfloor = \frac{e_1-e_2-1}{2}$ (that is if $e_1$ and $e_2$ have different parity), $(\lfloor \frac{e_1-e_2}{2} \rfloor +1)(\longueur{s}+\longueur{p})+\longueur{\suffix{u}}$ otherwise.
%$\alpha$, $\beta$ and $\eta$.\\
%They are in the same family. 
%At most 
\end{property}

\begin{proof}
By property \ref{alphas}, \dsuu\ has at most $\longueur{p}$ $\alpha$-mates.\\
%By property \ref{betas}, \dsuu\ has at most $(\min ((\lfloor \frac{e_1-e_2}{2}\rfloor), tr(\dsuu, w))-1)(\lvert p \rvert + \lvert s \rvert) + \lvert s \rvert + \min (\lvert \lcp(\generateur{u}, r)\rvert, \lvert p \rvert)$ $\beta$-mates.\\
Let's bound the number of $\beta$-mates. Write: 
\[x= \generateur{u}^{e_1}\prefix{u}\generateur{u}^{e_1+e_2}\prefix{u}\generateur{u}^{e_2}\generateur{u}^{t(\dsuu, x)}r\]
 for a suffix $r$ of $x$ and, because \dsuu\ has $\beta$-mates, by property \ref{betas2} , $\longueur{\lcp(r, \generateur{u})} < \longueur{\prefix{u}}$ if $e_1-\lfloor \frac{e_1-e_2}{2}\rfloor=e_2+t(\dsuu, x)$.\\
Because \dsuu\ has $\eta$-mates, by property \ref{etas2}, 
\[x= \generateur{u}^{e_1}\prefix{u}\generateur{u}^{e_1+e_2}\prefix{u}\generateur{u}^{2n-1}\prefix{u}r'\]
 for an integer $n\geq2$ and a suffix $r'$ of $x$ and $\longueur{\lcp(r', \generateur{u})}< \longueur{\suffix{u}}$ by corollary \ref{prefsuf}. \\
It follows that $t(\dsuu, x)= 2n-1-e_2$ and $r = \prefix{u}r'$.\\
If $t(\dsuu, x) < \lfloor \frac{e_1-e_2}{2} \rfloor$ then, by property \ref{betas}, the number of $\beta$-mates is less than $(\lfloor \frac{e_1-e_2}{2}\rfloor-2)(\lvert p \rvert + \lvert s \rvert + 1) + \lvert s \rvert + \min (\lvert \lcp(\generateur{u}, \prefix{u}r')\rvert +1$.\\
%If $t(\dsuu, w) < \lfloor \frac{e_1-e_2}{2} \rfloor$ then the number of $\beta$-mates is less than $(\lfloor \frac{e_1-e_2}{2}\rfloor-2)(\lvert p \rvert + \lvert s \rvert) + \lvert s \rvert + \min (\lvert \lcp(\generateur{u}, \prefix{u}r')\rvert-\longueur{\lcp(r', \generateur{u})}, \lvert p \rvert)$.\\
If $t(\dsuu,x)\geq e_1-\lfloor \frac{e_1-e_2}{2}\rfloor-e_2$ then the small repetitions of the $\beta$-mates that have the factorization $\generateur{\conj{u}^{e_1-\lfloor \frac{e_1-e_2}{2}\rfloor}}\prefix{\conj{u}}\generateur{\conj{u}}^{e_2+\lfloor \frac{e_1-e_2}{2}\rfloor}$ are repeated in the factor $\generateur{u}^{e_1+e_2}\prefix{u}\generateur{u}^{2n-1}\prefix{u}r'$ that starts at position $\longueur{u}+1$, and the number of $\beta$-mates is less than $(\lfloor \frac{e_1-e_2}{2}\rfloor-1)(\lvert p \rvert + \lvert s \rvert + 1)$.\\
% less $\beta$: too much tails: smaller repetitions...\\
Now, if $ \lfloor \frac{e_1-e_2}{2} \rfloor \leq t(\dsuu,x) < e_1-\lfloor \frac{e_1-e_2}{2}\rfloor-e_2$, that is $e_2+\lfloor \frac{e_1-e_2}{2} \rfloor \leq 2n-1< e_1-\lfloor \frac{e_1-e_2}{2}\rfloor$, which can only be solved if $e_1$ and $e_2$ have different parity, then $\lfloor \frac{e_1-e_2}{2}\rfloor = \frac{e_1-e_2-1}{2}$ and the number of $\beta$-mates is less than $\frac{e_1-e_2-1}{2}(\longueur{p} + \longueur{s} +1)$.\\
By property \ref{etassimple}, \dsuu\ has at most $\longueur{p}+\longueur{s}-\longueur{\generateur{u}}+\longueur{\prefix{u}}$ $\eta$-mates.\\
Because \dsuu\ has $\eta$-mates, by property \ref{epsilonetabis}, \dsuu\ has at most $2\longueur{\generateur{u}} - \longueur{s} - \longueur{p}$ $\epsilon$-mates.\\
Summing all of the above leads to at most $(\frac{e_1-e_2-1}{2})(\longueur{p}+\longueur{s})+\longueur{\generateur{u}}+\longueur{\prefix{u}}+\longueur{p}\leq (\frac{e_1-e_2-1}{2}+2)\longueur{\generateur{u}}+\longueur{\prefix{u}}-\longueur{s} $ $\alpha$, $\beta$, $\epsilon$ and $\eta$-mates.
\end{proof}

%%%%%%%%%%%%%%%%%%%%%%%%%%%%%%%%%       CONCLUSION      %%%%%%%%%%%%%%%%%%%%%%%%%%%%%%%%%%%
\section{Conclusion}
Fraenkel and Simpson conjectured that a word of length $n$ has less than $n$ distinct squares. That conjecture is motivated by the existence of a family of words that asymptotically reach that ratio of distinct squares over number of letters of 1: the FS words presented in \cite{FS1}. \\
The FS word $Q_j$ is defined as $Q_j=q_1q_2...q_j$ for $q_i=0^{i+1}10^i10^{i+1}1$.\\

\begin{figure}[H]
\begin{align*}
q_{i}.q_{i+1} = 0^{i+1} 1 0^{i} 1 \underbrace{0^{i+1} 1. 0^{{i+2}} 1 0^{i+1}}_{(0^{{i+1}-t}10^{t+1})^2} 1 0^{i+2} 1.
\end{align*}
  \caption{Squares with 2 ones of $Q_j$.}
  \label{deux}
\end{figure}

Note that there are FS double squares in $Q_j$. In Figures \ref{deux} et \ref{quatre}, the squares $u^2 = (0^{j-t}10^{t+1})^2$ and $U^2 = (0^{i+1-k}10^{i+2}10^k)^2$, for $j=i+1, t=k\leq j$, start at the same position and the canonical factorization of that double square \dsuu\ is ${u_{i,t}}_0 = 0^{i+1-t}10^t, {u_{i,t}}_1 = 0, e_1 = e_2 = 1$.

\begin{figure}[H]
 \[  q_i.q_{i+1}.q_{i+2} = 0^{i+1} 1 
 \rlap{$\overbrace{\phantom{ 0^i 1. 0^{i+1} 1.  0^{i+2} 1 0^{i+1} 1 0^{i+.}}}^{(0^{i-k}10^{i+1}10^{2+k})^2}$}
  0^i 1 \underbrace{0^{i+1} 1.  0^{i+2} 1 0^{i+1} 1 0^{i+2} 1.  0^{i+3}}_{(0^{i+1-k}10^{i+2}10^k)^2} 1 0^{i+2} 1 0^{i+3} 1.\] 
  \caption{Squares with 4 ones of $Q_j$.}
  \label{quatre}
\end{figure}

We provided a proof that a word of length $n$ has less than $\frac{3n}{2}$ based on a bound on the number of FS double squares in a word. The presence of FS double squares in FS words proves the importance of understanding them for that problem. The recent proof for the runs conjecture in \cite{bannai1}, and a proof that the words with the maximal ratio of squares over length are binary in \cite{MS1}, provide new tools to tackle that problem.
%Fraenkel and Simpson noted and proved in \cite{FS1} that no more than two squares can have their last occurence starting at the same position using Crochemore and Rytter three squares lemma \cite{CR95}. If two squares have their last occurence starting at the same position (we will call them FS double squares), a restrictive condition, they have to obey a certain structure. As in \cite{DFT1}, we will use those structures to relate the different FS double squares of a word to one another. Some of these relations are restrictive and will limit the number of FS double squares involved. Other will only give informations on the relative positions of the two FS double squares. We will prove that the relations defined are the only one possible, hence that we have an exhaustive list of the different FS double squares of a word. Finally, we will offer an induction that, using all the results that will have been presented, will prove that the number of FS double squares of a word is less than $\frac{n}{2}$. The bound for the number of distinct squares is a corollary. 

\section{Bibliography}
\newcommand{\noopsort}[1]{} \newcommand{\printfirst}[2]{#1}
  \newcommand{\singleletter}[1]{#1} \newcommand{\switchargs}[2]{#2#1}

\end{document}